\DeclareMathAlphabet{\mathcal}{OMS}{cmsy}{m}{n}
\definecolor{g1}{HTML}{2da02d}
\definecolor{r1}{HTML}{d62728}
\definecolor{b1}{HTML}{2178b5}
\definecolor{Cerulean}{HTML}{00A2E3}
\definecolor{JungleGreen}{HTML}{00A99A}
\definecolor{Orange}{HTML}{F58137}
\crefname{section}{Section}{Sections}
\crefname{subsection}{Subsection}{Sections}
\newtheorem{assumption}[theorem]{Assumption}
\newtheorem{remark}[theorem]{Remark}
\definecolor{colorb}{HTML}{0072BD}
\definecolor{coloro}{HTML}{D95319}
\definecolor{colorg}{HTML}{77AC30}
\definecolor{colorr}{HTML}{922B21}
\definecolor{colorgr}{HTML}{909497}
\definecolor{colord}{HTML}{17202A}
\definecolor{colordgr}{HTML}{424949}
\definecolor{r1}{HTML}{d62728}
\definecolor{g1}{HTML}{2da02d}
\definecolor{b1}{HTML}{2178b5}
\tikzset{
    >=stealth',
    punkt/.style={
           rectangle,
           rounded corners,
           draw=black, very thick,
           text width=6.5em,
           minimum height=2em,
           text centered},
    pil/.style={
           ->,
           thick,
           shorten <=2pt,
           shorten >=2pt,}
}
\title{Monte Carlo quasi-interpolation of spherical data\thanks{The first author was supported in part by National Natural Sicence Foundation of China (No. 12571407, 12101310), the Fundamental Research Funds for the Central Universities (No. 30923010912), and a Jiangsu Shuangchuang Team program (No. JSSCTD202449).}}
\author{
    Zhengjie Sun\thanks{School of Mathematics and Statistics,  Nanjing University of Science and Technology, Nanjing, China (\email{zhengjiesun@njust.edu.cn}).}
    \and
    Mengyuan Lv\thanks{School of Mathematics and Statistics,  Nanjing University of Science and Technology, Nanjing, China (\email{17684501420@163.com}).}
    \and
    Xingping Sun\thanks{Department of Mathematics, Missouri State University, Springfield, MO 65897, USA (\email{xsun@missouristate.edu}).}
}
\definecolor{g1}{HTML}{2da02d}
\definecolor{r1}{HTML}{d62728}
\definecolor{b1}{HTML}{2178b5}
\definecolor{Cerulean}{HTML}{00A2E3}
\definecolor{JungleGreen}{HTML}{00A99A}
\definecolor{Orange}{HTML}{F58137}
\numberwithin{equation}{section}
\newcommand{\bbE}{\mathbb{E}}
\newcommand{\bbN}{\mathbb{N}}
\newcommand{\bbP}{\mathbb{P}}
\newcommand{\bbR}{\mathbb{R}}
\newcommand{\bbS}{\mathbb{S}}
\newcommand{\cE}{\mathcal{E}}
\newcommand{\cO}{\mathcal{O}}
\newcommand{\cQ}{\mathcal{Q}}
\newcommand{\cM}{\mathcal{M}}
\newcommand{\cF}{\mathcal{F}}
\newcommand{\calR}{\mathcal{R}}
\newcommand{\vare}{\bm{\varepsilon}}
\newcommand{\bld}[1]{\boldsymbol{#1}}
\newcommand{\bY}{\bld{Y}}
\newcommand{\bZ}{\bld{Z}}
\newcommand{\bX}{{\bm X}}
\def\qed{~\relax\ifmmode\hskip2em \Box
 \else\unskip\nobreak\hskip1em \hfill$\Box$
 \fi \newline}
\newcommand*\pFq[6][8]{%
  \begingroup 
  \pFqmuskip=#1mu\relax
  \mathchardef\normalcomma=\mathcode`,
  \mathcode`\,=\string"8000
  \begingroup\lccode`\~=`\,
  \lowercase{\endgroup\let~}\pFqcomma
  {}_{#2}F_{#3}{\left(\genfrac..{0pt}{}{#4}{#5};#6\right)}%
  \endgroup
}
\newcommand{\pFqcomma}{{\normalcomma}\mskip\pFqmuskip}
\newcommand{\hgeom}[2]{\,{}_{#1\!}F_{#2}}
\def\d{\mathrm{d}}
\def\dmu{\d\mu(x)}
\def\sphHarm{\mathcal{Y}_{\ell,k}}
\def\Pro{\Phi_{\rho}}
\def\IntSph{\int_{\bbS^d}}
\def\scaleKer{\varphi_{\rho}}
\def\sphHarm{\mathcal{Y}_{\ell k}}
\def\QiE{\cQ_{\scaleKer}^{q,\bm{\varepsilon}}}
\def\Hsig{H^{\sigma}}
\def\Htau{H^{\tau}}
\def\Hatf{\widehat{f}_{\ell,k}}
\def\HatKer{\widehat{\scaleKer}(\ell)}
\def\HatQi{\widehat{\cQ_{\scaleKer}^q f}_{\ell,k}}
\def\fHsig{\|f\|_{\Hsig}}
\begin{document}
\maketitle
\begin{abstract}
We establish a deterministic and stochastic spherical quasi-interpolation framework featuring scaled zonal kernels derived from radial basis functions on the ambient Euclidean space. The method incorporates both  quasi-Monte Carlo and Monte Carlo quadrature rules to construct easily computable quasi-interpolants, 
which provide efficient approximation to Sobolev-space functions for both clean and noisy data. To enhance the approximation power and robustness of our quasi-interpolants, we develop a multilevel method in which quasi-interpolants constructed with graded resolutions join force to reduce the error of approximation. In addition, we derive probabilistic concentration inequalities for our quasi-interpolants in pertinent stochastic settings. The construction of our quasi-interpolants does not require solving 
any linear system of equations. Numerical experiments show that our quasi-interpolation algorithm is more stable and robust against noise than comparable ones in the literature. 
\end{abstract}
\begin{keywords}
 Scaled zonal kernel; Monte Carlo method; Probabilistic concentration inequality; Multilevel scheme.
\end{keywords}
\begin{AMS}
	43A90, 41A25, 41A55, 65D12, 65D32.
\end{AMS}

\section{Introduction}

Spherical geometries naturally arise from a wide range of scientific fields, including geophysics, astronomy, computer graphics, and data science. Developing efficient and accurate function approximation methods is fundamental for solving problems stemming from these curved domains. Over the last few years, various approaches have been developed
for function approximation on spherical domains including interpolation, quasi-interpolation, and hyperinterpolation. Interpolation usually requires solving systems of linear equations, which are prone to become computationally expensive for large datasets \cite{an_SINUM2012_regularized,golitschek-2001ConApprox-interpolation,hesse-2017NM-radial,jetter-1999MCoM-error,narcowich-2007FoCM-direct,narcowich-2002SIMA-scattered}. Methods encompassing hyperinterpolation and filtered hyperinterpolation entail using quadrature rules of high orders
\cite{an_SINUM2012_regularized,lin_SINUM2021_distributed,montufar_2022FoCom_distributed,sloan_1995JAT_polynomial,sloan_2012Geom_filtered,sloan2011polynomial}. Therefore, they inevitably encounter instabilities in computing 
high-order spherical harmonics and difficulties in constructing quadrature rules for high-degree polynomials. 

In contrast, quasi-interpolation methods offer computational efficiency by avoiding the need to solve linear systems \cite{buhmann2003radial-book,buhmann2022quasi-book,gao-2024NM-quasi,ramming-2018MCoM-kernel,sun-2022JSC-convergent,wu2005generalized,wu1994shape}.
Several spherical quasi-interpolation approaches have been developed, as summarized in the recent book by Buhamann and J\"{a}ger \cite{buhmann2022quasi-book}. These include tensor product trigonometric splines in spherical coordinates \cite{ganesh2006quadrature}, Fourier coefficient truncation \cite{gomes2001approximation}, and spline methods based on sphere triangulations \cite{ibanez2010construction}. 

Recently, authors of \cite{sun-gao-sun} introduced a new spherical quasi-interpolation method using scaled zonal kernels, which is constructed in two steps. In the first step, a spherical convolution operator featuring a scaled zonal kernel is employed to approximate a target function with optimal accuracy. In the second step, the underlying convolution integral is discretized using  a carefully-designed positive quadrature rule
to produce the final quasi-interpolant. We will use the phrase ``scaled kernel quasi-interpolation" (SKQI) to  refer to this particular quasi-interpolation method. 
Numerical experiments showed that SKQI outperforms methods associated with hyperinterpolation in terms of robustness against noisy data and computational efficiency \cite{sun-gao-sun}.
 
 Due to the use of high-order quadrature rules, the approach taken 
in \cite{sun-gao-sun}  still suffers
 from the computational instability and implementational inefficiency aforementioned for hyperinterpolation and filtered hyperinterpolation.
In addition, the authors of \cite{sun-gao-sun} only addressed clean data, which has impeded far-reaching applications of the SKQI method. 
Furthermore, there are other desirable features of SKQI which have not been analyzed. For example,  
 scaled zonal kernels are versatile for designing multilevel approximation algorithms, which are frequently utilized to reduce computational complexity and enhance approximation power by
 combining quasi-interpolants of graded resolutions; see e.g.\cite{farrell-2017IMAJNA-multilevel,floater-1996JCAM-multistep,franz-wendland2023multilevel,georgoulis-2013SISC-multilevel,gia_2010SINUM_multiscale,iske-2005NumerAlgor-multilevel,kempf-2023NM-high,narcowich-1999ACHA-multilevel,usta-levesley2018multilevel,wendland-2010NM-multiscale}. This makes the SKQI method effective for problems involving large data.

 Multilevel quasi-interpolation has been extensively studied in Euclidean domains \cite{franz-wendland2023multilevel,usta-levesley2018multilevel,sharon-2023SISC-multiscale}. However, these theories and techniques do not apply directly in 
 spherical domains. To a large extent, multilevel quasi-interpolation in spherical domains, especially 
  those based on linear combinations of function values and kernel evaluations, remains heretofore unexplored.

We hope to accomplish the following two tasks in
writing this paper. 1. Fortify the theoretical framework of SKQI by adopting Monte Carlo (MC) and quasi-Monte Carlo (QMC) methods, which includes multilevel algorithms where they are pertinent. 2. Enhance the practicalities of SKQI applications by analyzing both clean and noisy data.
On the deterministic front, we employ quasi-Monte Carlo quadrature rules developed in \cite{brauchart-2014MCoM-qmc} to discretize the convolution integral to produce quasi-interpolants. Under the conditions specified in Assumptions \ref{assump:kernel} and \ref{assump2} on the underlying kernel, we establish Sobolev error estimates for the SKQI method.
Furthermore, we develop a multilevel quasi-interpolation method using scaled zonal kernels and provide a comprehensive theoretical analysis of the multilevel scheme, including the derivation of recursive relations for both approximation and error operators, as well as detailed convergence properties of the hierarchical construction.
On the stochastic front, we discretize the spherical convolution integral in the Monte Carlo way. The bounded difference inequality from probability concentration theory allows us to establish an $L_2$-probability concentration inequality for the Monte Carlo quasi interpolants (MCQI). Based on this result, we derive both 
$L_2$- and $L_\infty$-convergence rates of MCQI. To motivate far-reaching applications of the MCQI method, we provide a thorough $L_2$-probabilistic convergence analysis, considering noisy data in both deterministic and stochastic settings.  Comprehensive numerical experiments demonstrate that the proposed method exhibits significant advantages over established techniques such as hyperinterpolation and filtered hyperinterpolation, especially in challenging computing environments where noisy data prevail. Overall, we believe the MCQI method is a robust and practical solution for a variety of spherical approximation problems.

The organization of the paper is as follows. In the next section, we introduce background information and notations.
In \cref{sec:sph_qi}, we study spherical quasi-interpolation methods and derive error estimates in the deterministic setting. We also introduce the multilevel quasi-interpolation on the sphere. \cref{sec:Sto_qi_sph} focuses on stochastic quasi-interpolation using Monte Carlo sampling.  \cref{sec:qi_noisydata} analyzes the 
capacity of the SKQI method in terms of handling noisy data and derive $L_2$ probabilistic convergence rates for both deterministic and stochastic sampling. In \cref{sec:numer_exper}, we present numerical experiments that validate our theoretical results, which includes: (i) convergence tests on various point sets; (ii) a comparison with filtered hyperinterpolation for approximating noisy data.

\section{Preliminaries}\label{sec:prelim}In this section, we introduce some necessary notations and definitions for function spaces on the sphere, as well as the scaled zonal kernels.
\subsection{Function spaces on the sphere}
The unit sphere is defined by $\bbS^d=\{x\in\bbR^{d+1}:\|x\|=1\}$, where $\|\cdot\|$ denotes the Euclidean norm. 
Let $L_2(\bbS^d)$ be the space of square-integrable functions on $\bbS^d$ endowed with the inner product 
\begin{equation*}
    (f,g)_{L_2(\bbS^d)}:=\IntSph f(x)g(x)\dmu,
\end{equation*}
where $\dmu$ denotes the rotationally invariant measure on $\bbS^d$. The space $L_2(\bbS^d)$ admits an orthonormal basis of spherical harmonics
\begin{equation*}
    \big\{\sphHarm:k=1,2,\ldots,Z(d,\ell),~\ell=0,1,\ldots\big\}
\end{equation*}
with dimension $Z(d,\ell)$ given by
$$Z(d,0)=1,~\text{and}~~Z(d,\ell)=\frac{(2\ell+d-1)\Gamma(\ell+d-1)}{\Gamma(\ell+1)\Gamma(d)}~~\text{for} ~\ell\geq 1.$$
The asymptotic relation $Z(d,\ell)=\cO(\ell^{d-1})$ holds true for $\ell \rightarrow\infty$. A spherical harmonic of degree $\ell$ is the restriction to $\bbS^d$ of a harmonic homogeneous polynomial of degree $\ell$ on $\bbR^{d+1}$. 
Every function in $L_2(\bbS^d)$ has a Fourier-Legendre series representation in spherical harmonics, 
\begin{equation}\label{eq:Fourier_representation}
	f(x)=\sum_{\ell=0}^{\infty}\sum_{k=1}^{Z(d,\ell)}\Hatf\sphHarm(x),
\end{equation}
with the Fourier-Legendre coefficient given by
$$\Hatf =\langle f,\sphHarm\rangle= \int_{\bbS^d} f(x) \overline{\sphHarm}(x)\dmu.$$
For $\sigma \geq 0$, the Sobolev space $\Hsig=\Hsig(\bbS^d)$ is defined as 
$$H^{\sigma}(\bbS^d)=\{f\in L_2(\bbS^d):\|f\|_{\Hsig(\bbS^d)}<\infty\}$$
with the norm given by
\begin{equation*}\label{eq:Sobolev_norm}
	\|f\|_{\Hsig(\bbS^d)}^2 = \sum_{\ell=0}^{\infty} \sum_{k=1}^{Z(d,\ell)} (1 + \ell)^{2\sigma} |\Hatf|^2.
\end{equation*}

\subsection{Scaled zonal kernels}
Zonal functions on the sphere $\bbS^d$ can be represented as $\varphi(x\cdot y)$, where $x,y\in\bbS^d$ and $\varphi(t)$ is a continuous function on $[-1,1]$, 
which admits the following symmetric representation
\begin{equation}\label{eq:zonalfunctions}
    \varphi(x\cdot y)=\sum_{\ell=0}^{\infty} a_{\ell}P_{\ell}(d+1;x\cdot y), 
\end{equation}
 where $P_{\ell}(d+1;t)$ is the $(d+1)$-dimensional Legendre polynomial of degree $\ell$, normalized such that $P_{\ell}(d+1;1)=1$. 
The series on the right hand side of \eqref{eq:zonalfunctions} is referred to as the ``Fourier-Legendre expansion" of the zonal kernel $\varphi$, and $a_\ell$ the ``Fourier-Legendre coefficients". 
Convergence of the series is in the sense of Schwartz class distributions. 

An efficient way of calculating Fourier-Legendre coefficients is via Funk-Hecke formula, which states that for every spherical harmonic of degree $\ell$ and order $k$, there holds that
\begin{equation}\label{eq:FunkHeckeFormula}
 \int_{\mathbb{S}^d}\varphi(x\cdot y)\sphHarm(y)\d\mu(y)=\widehat{\varphi}(\ell)\sphHarm(x),~
\text{with} ~~\widehat{\varphi}(\ell)=\int_{-1}^1\varphi(t)P_{\ell}(d+1;t)\d t.
  \end{equation}
  This together with the addition formula for spherical harmonics (e.g. \cite[Page 10]{muller1966SphHarm})
leads to
\begin{equation}\label{kernel-coefficient}
    \varphi(x\cdot y)=\sum_{\ell=0}^{\infty}\widehat{\varphi}(\ell)K_{\ell}(x,y),
\end{equation}
in which $\widehat{\varphi}(\ell)=\frac{\omega_d}{Z(d,\ell)}a_{\ell}$, where $\omega_{d}$ is the volume of $\bbS^d$, and \begin{equation}\label{eq:AddiThm}
    K_{\ell}(x,y)=\sum_{k=1}^{Z(d,\ell)}\sphHarm(x)\sphHarm(y)=\frac{Z(d,\ell)}{\omega_d}P_{\ell}(d+1;x\cdot y).
\end{equation}   

Let $\Phi$ be a radial function defined by $\Phi(x)=\phi(\|x\|)$ on $\bbR^{d+1}$ with $\phi\in C[0,\infty]$. For $0<\rho<1$, we define its scaled version as $\Phi_{\rho}(x)=\Phi(\rho^{-1}x)$. By restricting the scaled radial kernel $\Phi_{\rho}(x-y)$ to the sphere, we define the following scaled zonal kernel:
\begin{equation}\label{eq:zonal_kernel}
	\scaleKer(x\cdot y)=\frac{1}{\Lambda_{\phi,\rho}}\Phi_{\rho}(x-y),
\end{equation}
where
\begin{equation}\label{eq:Lambda}
    \Lambda_{\phi,\rho}=\int_{\bbS^d}\Phi_{\rho}(x-y)\dmu,~~x,y\in\bbS^d.
\end{equation}

Similarly, the Fourier-Legendre representation of a zonal kernel is given by
\begin{equation*}\label{eq:ScalerKer}
	\scaleKer(x\cdot y)=\sum_{\ell=0}^{\infty} \widehat{\scaleKer}(\ell)\sum_{k=1}^{Z(d,\ell)}\sphHarm(x)\sphHarm(y).
\end{equation*}

To use this zonal kernel in constructing quasi-interpolation approximations in the next section, we make the following assumption:
\begin{assumption}\label{assump:kernel}
	For the scaled zonal kernel $\scaleKer$ defined in \eqref{eq:zonal_kernel}, we assume that
	\begin{enumerate}[label=(\arabic*)]
		\item For $0<\rho<\rho_0<1$, there exist two constants $m\geq 0$ and $c_1>0$, independent of $\ell,\rho$, such that
        \begin{equation}\label{eq:assump_eq1}
            |1-\widehat{\scaleKer}(\ell)|\leq c_1 \ell^m\rho^m,~~ 0\leq\ell\leq \ell_{\rho}=\lfloor \rho^{-1}-1\rfloor;~~
        \end{equation}
		\item For $\ell>\ell_{\rho}$, there exists a constant $c_2>0$ such that $\widehat{\scaleKer}(\ell)\leq c_2$.
	\end{enumerate}
\end{assumption}
There is a large family of zonal kernels satisfying the above assumption, as verified in \cite{sun-gao-sun}, including the spherical versions of the Poisson kernel, Gaussian kernel, and compactly-supported kernels. In the remainder of this paper, we use $c_1, c_2,...$ and $C_1,C_2,\ldots$ for specific constants, while
$C$ denotes a generic constant, which may take different values in each occurrence. To further facilitate our error analysis, we impose the following decay condition on Fourier-Legendre coefficients of the kernel:
\begin{assumption}\label{assump2}
For the scaled zonal kernel $\scaleKer$, there exist constants $0<c_3\leq c_4<\infty$ such that
    \begin{equation}\label{eq:kernelFourierCond}
    c_3(1+\rho\ell)^{-2s}\leq \widehat{\scaleKer}(\ell)\leq c_4(1+\rho\ell)^{-2s},~~\ell>0.
\end{equation}
\end{assumption}

A scaled zonal kernel $\varphi_{\rho}$ induces its native space 
$$\mathcal{N}_{\Pro}:=\{f\in \mathcal{S}'(\bbS^d):\|f\|_{\Pro}< \infty\}$$
with norm
$$\|f\|_{\Phi_{\rho}}^2=\sum_{\ell=0}^{\infty}\sum_{k=1}^{Z(d,\ell)}\frac{|\widehat{f}_{\ell,k}|^2}{\widehat{\scaleKer}(\ell)}.$$
Here $\mathcal{S}'(\bbS^d)$ denotes the space of Schwartz class distributions on $\bbS^d$.
Le Gia et al. \cite[Lemma~3.1]{gia_2010SINUM_multiscale} proved the following results.
\begin{lemma}\label{lem:Equiv_Sobolev_Native}
    For $\rho\leq 1$ and all $f\in \Hsig(\bbS^d)$, we have the following inequalities
    $$\|f\|_{\Pro}\leq c_3^{-1/2}\|f\|_{\Hsig}\leq (c_4/c_3)^{1/2}\|f\|_{\Phi_1},~~\|f\|_{\Phi_1}\leq (c_4/c_3)^{1/2}\rho^{-s}\|f\|_{\Pro}.$$
\end{lemma}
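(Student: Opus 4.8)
The plan is to reduce all three inequalities to termwise comparisons of Fourier--Legendre multipliers, because each of the three norms is a weighted $\ell^2$-sum of the \emph{same} coefficients $\Hatf$. First I would record the spectral expressions
\begin{equation*}
\|f\|_{\Pro}^2=\Sumlk\frac{|\Hatf|^2}{\HatKer},\qquad
\|f\|_{\Hsig}^2=\Sumlk (1+\ell)^{2\sigma}|\Hatf|^2,\qquad
\|f\|_{\Phi_1}^2=\Sumlk\frac{|\Hatf|^2}{\widehat{\varphi_1}(\ell)},
\end{equation*}
the last being the $\rho=1$ instance of the native-space norm, and with the smoothness index fixed at $\sigma=s$ to match \cref{assump2}. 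Every claim then follows by estimating, uniformly in $\ell$, the ratio of the weight attached to the larger norm to that attached to the smaller one, and invoking the two-sided bound \eqref{eq:kernelFourierCond}.

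For the first inequality I would use the lower bound $\HatKer\geq c_3(1+\rho\ell)^{-2s}$, which gives $1/\HatKer\leq c_3^{-1}(1+\rho\ell)^{2s}$; since $\rho\leq 1$ forces $1+\rho\ell\leq 1+\ell$, the termwise estimate $1/\HatKer\leq c_3^{-1}(1+\ell)^{2s}$ holds, and summation yields $\|f\|_{\Pro}^2\leq c_3^{-1}\|f\|_{\Hsig}^2$. For the middle inequality I would instead use the upper bound at scale one, $\widehat{\varphi_1}(\ell)\leq c_4(1+\ell)^{-2s}$, so that $(1+\ell)^{2s}\leq c_4/\widehat{\varphi_1}(\ell)$ termwise and hence $\|f\|_{\Hsig}^2\leq c_4\|f\|_{\Phi_1}^2$; since $c_4\leq c_4/c_3$ once $c_3\leq 1$, the slightly larger stated constant $(c_4/c_3)^{1/2}$ follows a fortiori.

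The third inequality is the genuine one, as it compares two different native spaces and must produce the scaling factor $\rho^{-s}$. Here I would bound the ratio of weights by combining the upper bound on $\HatKer$ with the lower bound on $\widehat{\varphi_1}(\ell)$,
\begin{equation*}
\frac{\HatKer}{\widehat{\varphi_1}(\ell)}\leq\frac{c_4(1+\rho\ell)^{-2s}}{c_3(1+\ell)^{-2s}}
=\frac{c_4}{c_3}\left(\frac{1+\ell}{1+\rho\ell}\right)^{2s}.
\end{equation*}
The key elementary step, and the only place where I expect any thought to be needed, is the inequality $(1+\ell)/(1+\rho\ell)\leq \rho^{-1}$, which is equivalent to $\rho(1+\ell)\leq 1+\rho\ell$, i.e.\ to $\rho\leq 1$, and therefore holds by hypothesis. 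This gives $1/\widehat{\varphi_1}(\ell)\leq (c_4/c_3)\rho^{-2s}/\HatKer$ termwise, hence $\|f\|_{\Phi_1}^2\leq (c_4/c_3)\rho^{-2s}\|f\|_{\Pro}^2$; taking square roots in all three estimates produces the stated chain. The only remaining bookkeeping is the term $\ell=0$, where \eqref{eq:kernelFourierCond} is not asserted: there the normalization \eqref{eq:Lambda} forces $\widehat{\scaleKer}(0)=1$, so each termwise comparison holds trivially at $\ell=0$ as well, and no analytic difficulty beyond Parseval and these multiplier bounds arises.
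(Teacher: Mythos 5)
Your proof is correct, and it is worth pointing out that the paper does not actually prove \cref{lem:Equiv_Sobolev_Native}: it quotes the result from Le Gia, Sloan and Wendland \cite[Lemma~3.1]{gia_2010SINUM_multiscale}, and the argument given there is precisely the termwise Fourier--Legendre multiplier comparison you carry out (including the elementary step $(1+\ell)/(1+\rho\ell)\leq\rho^{-1}$ for $\rho\leq 1$), so yours is the standard proof rather than a genuinely different route. Two bookkeeping remarks. First, the middle inequality needs no \emph{a fortiori} step and no hypothesis $c_3\leq 1$: your bound $\|f\|_{\Hsig}\leq c_4^{1/2}\|f\|_{\Phi_1}$, multiplied through by $c_3^{-1/2}$, is exactly the stated inequality $c_3^{-1/2}\|f\|_{\Hsig}\leq (c_4/c_3)^{1/2}\|f\|_{\Phi_1}$. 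Second, the $\ell=0$ term is not quite ``trivial'': since \eqref{eq:kernelFourierCond} is asserted only for $\ell>0$ and $\widehat{\scaleKer}(0)=\widehat{\varphi_1}(0)=1$, the termwise comparisons at $\ell=0$ amount to $c_3\leq 1$ and $c_4\geq 1$. These can be assumed without loss of generality (shrinking $c_3$ and enlarging $c_4$ only weakens the stated constants), or deduced by combining \cref{assump2} with the approximate-identity property \eqref{eq:assump_eq1}, which forces $\widehat{\scaleKer}(\ell)\to 1$ as $\rho\to 0$ for fixed $\ell$ and hence $c_3\leq 1\leq c_4$ when the constants are uniform in $\rho$. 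Finally, your identification $\sigma=s$ is the correct reading of the statement; it matches how the lemma is invoked in the paper, e.g.\ to bound $\|\scaleKer\|_{H^s}$ in the proof of \cref{thm:QMC_error}.
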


Scaled kernels satisfying \cref{assump2} have been thoroughly investigated in the literature \cite{gia_2010SINUM_multiscale,legia_2012ACHA_multiscale}. In \cite{gia_2010SINUM_multiscale}, authors showed that the spherical versions of Wendland's kernels have Fourier-Legendre coefficients satisfying the decay condition \eqref{eq:kernelFourierCond}. 
We will show below that Wendland's kernels in even dimensions satisfy \cref{assump:kernel}.
To start, we recall that scaled Wendland's kernels are given by
$$\Phi(x;\rho)=\phi_{l,k}(\|x\|;\rho)=p_k(\rho r)(1-\rho^{-1}\|x\|)^{l+k}_{+},~~x\in \bbR^{d+1},~0<\rho<1,$$
where $p_k$ is a polynomial of degree $k$. For more details on these kernels, we refer to \cite{wendland2004scattered}. In the case of even dimensions $d=2n+2$ with $n\in \bbN_0$, we define the scaled zonal Wendland's kernel by
\begin{equation}\label{eq:restrictedKernel_Wendland}
    \varphi_\rho^w(x\cdot y):=\Lambda_{\rho,\mu,d}^{-1}\phi_{l,k}(\|x-y\|;\rho)|_{x,y\in \bbS^d},
\end{equation}
where 
$$\Lambda_{\rho,\mu,d}=(2\pi)^{\frac{d-1}{2}}C_{\mu} \rho^d,~~\mu=n+k+2,~C_{\mu}=\frac{2^{\mu-\frac{1}{2}}\Gamma(\mu-\frac{1}{2})\Gamma(\mu+1)}{\Gamma(3\mu-1)}.$$

The Fourier-Legendre coefficients of scaled zonal Wendland's kernels were explicitly derived in \cite[Theorem 4.7]{hubbert-2023Adv-generalised}. With this result, we can show the following result.
\begin{lemma}
     Let $d=2n+2$ with $n\in\bbN_0$. Then, for sufficiently small $\rho$, the scaled zonal Wendland's kernel $\varphi_\rho^w(x\cdot y)$ satisfies \cref{assump:kernel} with $m=2$.
\end{lemma}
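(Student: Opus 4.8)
The plan is to take the closed form of the Fourier--Legendre coefficients $\widehat{\varphi_\rho^w}(\ell)$ supplied by \cite[Theorem~4.7]{hubbert-2023Adv-generalised} as the starting point and to verify the two items of \cref{assump:kernel} in the regimes $\ell\le\ell_\rho$ (equivalently $\rho\ell\lesssim 1$) and $\ell>\ell_\rho$ separately. Before manipulating the formula I would record the normalisation: the kernel \eqref{eq:restrictedKernel_Wendland} is an instance of \eqref{eq:zonal_kernel} whose normalising constant is the total mass of the profile, so $\int_{\bbS^d}\varphi_\rho^w(x\cdot y)\,\d\mu(y)=\widehat{\varphi_\rho^w}(0)=1$. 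This settles $\ell=0$ in \eqref{eq:assump_eq1} (the right-hand side vanishes there) and, more usefully, it lets me subtract the $\ell=0$ identity and rewrite the quantity to be estimated, via \eqref{eq:FunkHeckeFormula} and the addition formula, as
\[
1-\widehat{\varphi_\rho^w}(\ell)=\int_{\bbS^d}\varphi_\rho^w(x\cdot y)\bigl(1-P_\ell(d+1;x\cdot y)\bigr)\,\d\mu(y).
\]
The conceptual reason the exponent is $m=2$ is already visible here: the Wendland profile $\phi_{l,k}(\cdot;\rho)$ is compactly supported, so $\varphi_\rho^w(x\cdot y)$ is supported where $\|x-y\|\le\rho$; since $1-x\cdot y=\tfrac12\|x-y\|^2$, this forces $1-x\cdot y\le\tfrac12\rho^2$ on the support.

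For item (1) with $1\le\ell\le\ell_\rho$, the integrand above is nonnegative because $\varphi_\rho^w\ge0$ and $P_\ell(d+1;t)\le P_\ell(d+1;1)=1$, so $0\le 1-\widehat{\varphi_\rho^w}(\ell)$. I would then invoke the standard Gegenbauer estimate $0\le 1-P_\ell(d+1;t)\le C_d\,\ell(\ell+d-1)\,(1-t)$ and bound the integral by the supremum of $1-P_\ell(d+1;\cdot)$ over the support times the unit mass. Combining this with $1-x\cdot y\le\tfrac12\rho^2$ on the support and with $\ell(\ell+d-1)\le d\,\ell^2$ for $\ell\ge1$ yields $|1-\widehat{\varphi_\rho^w}(\ell)|\le c_1\ell^2\rho^2$, which is exactly \eqref{eq:assump_eq1} with $m=2$. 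Equivalently, the same bound can be read off from \cite[Theorem~4.7]{hubbert-2023Adv-generalised} by expanding the resulting ratio of Gamma functions in the small parameter $\rho^2\ell(\ell+d-1)$ and retaining the leading term.

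For item (2), i.e.\ $\ell>\ell_\rho$, no expansion is needed: Wendland kernels are positive definite on $\bbS^d$, so $\widehat{\varphi_\rho^w}(\ell)\ge0$, while the representation above together with $P_\ell(d+1;\cdot)\le1$ gives $\widehat{\varphi_\rho^w}(\ell)\le\widehat{\varphi_\rho^w}(0)=1$. Thus \cref{assump:kernel}(2) holds with $c_2=1$. Alternatively one may quote the two-sided decay of \cref{assump2}, valid for these kernels by \cite{gia_2010SINUM_multiscale,hubbert-2023Adv-generalised}, which gives $\widehat{\varphi_\rho^w}(\ell)\le c_4(1+\rho\ell)^{-2s}\le c_4$.

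The main obstacle lies entirely in item (1). On the explicit-formula route favoured by the surrounding discussion, the delicate step is the \emph{uniform} asymptotic expansion of the Gamma-function ratio from \cite[Theorem~4.7]{hubbert-2023Adv-generalised}: one must show that its deviation from $1$ is genuinely of order $\rho^2\ell(\ell+d-1)$, with no surviving term of order $\rho\ell$, uniformly for $1\le\ell\le\ell_\rho$ and $0<\rho<\rho_0$; the restriction to even dimensions $d=2n+2$ is precisely what makes the half-integer parameters collapse into a finite product and renders this expansion tractable. On the integral route sketched above the single nontrivial input is the Gegenbauer estimate for $1-P_\ell(d+1;\cdot)$ together with the elementary identity $1-x\cdot y=\tfrac12\|x-y\|^2$ that confines the support to $\{\|x-y\|\le\rho\}$; the one point to watch is that the constant $C_d$ be kept independent of $\ell$ and $\rho$.
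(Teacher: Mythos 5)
Your proof is correct in substance, but it takes a genuinely different route from the paper's. The paper argues entirely from the explicit formula: it quotes Hubbert--J\"{a}ger's ${}_3F_2$ representation of $\widehat{\varphi_\rho^w}(\ell)$, observes that it collapses to the finite sum $1+\sum_{j=1}^{\ell+n}a_j(\ell,n)\rho^{2j}$, and then outsources both conditions of \cref{assump:kernel} to external results -- item (1) to \cite[Prop.~3.7]{sun-gao-sun}, and only ``for fixed $\ell$ and sufficiently small $\rho$'', item (2) to \cite[Theorem~6.2]{gia_2010SINUM_multiscale}. That is exactly the route you sketch only as an alternative. Your primary argument is softer and self-contained: the representation $1-\widehat{\varphi_\rho^w}(\ell)=\int_{\bbS^d}\varphi_\rho^w(x\cdot y)\bigl(1-P_\ell(d+1;x\cdot y)\bigr)\,\d\mu(y)$, the support confinement $1-x\cdot y=\tfrac12\|x-y\|^2\le\tfrac12\rho^2$, and the endpoint derivative bound $1-P_\ell(d+1;t)\le C_d\,\ell(\ell+d-1)(1-t)$ (which holds with $C_d=1/d$ via $\tfrac{\d}{\d t}P_\ell(d+1;t)=\tfrac{\ell(\ell+d-1)}{d}P_{\ell-1}(d+3;t)$) together give $|1-\widehat{\varphi_\rho^w}(\ell)|\le c_1\ell^2\rho^2$ with constants manifestly independent of $\ell$ and $\rho$. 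This buys the uniformity over $1\le\ell\le\ell_\rho$ that \cref{assump:kernel} actually demands but the paper's ``fixed $\ell$'' statement leaves implicit, and it exposes the structural reason for $m=2$ (support of radius $\rho$ plus the quadratic chordal-distance identity) without touching the hypergeometric coefficients. Your item (2) -- nonnegativity of the Wendland profile plus $|P_\ell(d+1;t)|\le 1$, giving $0\le\widehat{\varphi_\rho^w}(\ell)\le\widehat{\varphi_\rho^w}(0)$ -- is likewise more elementary than the paper's citation.

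One caveat you should repair: you assert that the normalising constant in \eqref{eq:restrictedKernel_Wendland} is the exact total mass of the profile, so that $\widehat{\varphi_\rho^w}(0)=1$. That is how \eqref{eq:zonal_kernel} is normalised, but \eqref{eq:restrictedKernel_Wendland} instead divides by the closed-form constant $\Lambda_{\rho,\mu,d}=(2\pi)^{(d-1)/2}C_\mu\rho^d$, which matches the mass only to leading order; consistently, the quoted ${}_3F_2$ at $\ell=0$ equals $1+\mathcal{O}(\rho^2)$, not $1$, once $n\ge 1$. This does not damage your estimate for $\ell\ge 1$: splitting $1-\widehat{\varphi_\rho^w}(\ell)=\bigl(1-\widehat{\varphi_\rho^w}(0)\bigr)+\int_{\bbS^d}\varphi_\rho^w(x\cdot y)\bigl(1-P_\ell(d+1;x\cdot y)\bigr)\,\d\mu(y)$, the mass defect is $\mathcal{O}(\rho^2)\le\mathcal{O}(\ell^2\rho^2)$ and is absorbed into $c_1$. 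At $\ell=0$ the strict reading of \eqref{eq:assump_eq1} forces $\widehat{\varphi_\rho^w}(0)=1$ exactly, which fails for the paper's normalisation when $n\ge 1$; but this defect is equally present in the paper's own expansion, so it is an inconsistency of the paper's setup rather than of your method.
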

\begin{proof}
   Changing the ambient space from $\bbR^d$ to $\bbR^{d+1}$, we quote Hubbert and J\"{a}ger's formula \cite{hubbert-2023Adv-generalised}  as
\begin{align*}
    \widehat{\varphi_\rho^w}(\ell)= \hgeom{3}{2}\left[\begin{array}{c} -(\ell+n), \ell+n+1, \mu-\frac{1}{2} \\ \frac{3\mu-1}{2}, \frac{3\mu}{2} \end{array}; \frac{\rho^2}{4} \right],
\end{align*}
where $\hgeom{3}{2}$ denotes the hypergeometric function. 
This reduces to a finite sum as follows:
\begin{align*}
    \widehat{\varphi_\rho^w}(\ell)=\sum_{j=0}^{\ell+n}\frac{\big(-(\ell+n)\big)_j(\ell+n+1)_j(\mu-\frac{1}{2})_j}{(\frac{3\mu-1}{2})_j(\frac{3\mu}{2})_j}\Big(\frac{\rho^2}{4}\Big)^j
    = 1+\sum_{j=1}^{\ell+n}a_{j}(\ell,n)\rho^{2j},
\end{align*}
where $(\cdot)_j$ denotes the Pochhammer symbol, with $(\cdot)_0=1$.  This observation allows us to show that the kernel satisfies the first formula \eqref{eq:assump_eq1} in \cref{assump:kernel} for fixed $\ell$ and sufficiently small $\rho$; see \cite[Prop.~3.7]{sun-gao-sun} for details. The second condition (2) in \cref{assump:kernel} can be  verified directly from \cite[Theorem 6.2]{gia_2010SINUM_multiscale}.
\end{proof}


We end this section with $L_\infty$ and $L_2$-norm estimates of the scaled zonal kernel, which will be utilized in the subsequent sections.   
\begin{lemma}\label{lem:ScaleKernelNorm}
     Let $\phi\in C[0,\infty]$  satisfy  $\int_{0}^{\infty}\phi(z)z^{d}\d z<\infty$, and let $\scaleKer$ be the scaled zonal kernel defined in \eqref{eq:zonal_kernel}. Then for sufficiently small $\rho$ satisfying $0<\rho<\rho_0<1$, there exists a constant $C>0$ independent of $\rho$ such that 
    $$\|\scaleKer\|_{L_{\infty}}\leq C\rho^{-d}\quad \text{and}\quad \|\scaleKer\|_{L_{2}}\leq C\rho^{-d/2}.$$
\end{lemma}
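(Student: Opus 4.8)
The proof reduces everything to pinning down the $\rho$-scaling of the normalizing constant $\Lambda_{\phi,\rho}$ from \eqref{eq:Lambda}: once one shows $\Lambda_{\phi,\rho}\asymp\rho^d$, both bounds follow by dividing the $L_\infty$- and $L_2$-sizes of the unnormalized kernel $\Pro$ by this quantity. First I would convert the spherical integral defining $\Lambda_{\phi,\rho}$ into a one-dimensional integral via the zonal slicing identity $\IntSph g(x\cdot y)\,\d\mu(y)=\omega_{d-1}\int_{-1}^1 g(t)(1-t^2)^{(d-2)/2}\,\d t$ (valid for $d\ge 2$, with $\omega_{d-1}=|\bbS^{d-1}|$). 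Since $\Pro(x-y)=\phi(\rho^{-1}\|x-y\|)$ and $\|x-y\|^2=2(1-x\cdot y)$, the integrand depends on $t=x\cdot y$ only through $\phi(\rho^{-1}\sqrt{2(1-t)})$.

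Next I would perform the change of variables $u=\rho^{-1}\sqrt{2(1-t)}$, i.e.\ $t=1-\tfrac12\rho^2u^2$, which concentrates the mass near the diagonal $t=1$ and pulls out the expected power of $\rho$. A direct computation gives $(1-t^2)^{(d-2)/2}=(\rho u)^{d-2}\bigl(1-\tfrac14\rho^2u^2\bigr)^{(d-2)/2}$ and $\d t=-\rho^2 u\,\d u$, so that
\[
\Lambda_{\phi,\rho}=\omega_{d-1}\rho^d\int_0^{2/\rho}\phi(u)\,u^{d-1}\bigl(1-\tfrac14\rho^2u^2\bigr)^{(d-2)/2}\,\d u.
\]
The decisive step is to bound the remaining integral above and below by positive constants independent of $\rho$. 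The hypothesis $\int_0^\infty\phi(z)z^{d}\,\d z<\infty$ together with the continuity (hence boundedness) of $\phi$ on $[0,\infty]$ yields $\int_0^\infty\phi(u)u^{d-1}\,\d u<\infty$, so by dominated convergence (using $0\le\bigl(1-\tfrac14\rho^2u^2\bigr)^{(d-2)/2}\le1$ for $d\ge2$) the integral converges to $\int_0^\infty\phi(u)u^{d-1}\,\d u$ as $\rho\to0$. Since this limit is strictly positive for the admissible profiles (with $\phi\ge0$ and $\phi(0)>0$), choosing $\rho_0$ small enough forces $c_0\rho^d\le\Lambda_{\phi,\rho}\le C_0\rho^d$.

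With the two-sided bound in hand, the $L_\infty$ estimate is immediate: $\|\scaleKer\|_{L_\infty}=\Lambda_{\phi,\rho}^{-1}\sup_{t\in[-1,1]}\bigl|\phi(\rho^{-1}\sqrt{2(1-t)})\bigr|\le\|\phi\|_{L_\infty[0,\infty]}/(c_0\rho^d)=C\rho^{-d}$, where finiteness of the supremum uses $\phi\in C[0,\infty]$. For the $L_2$ estimate I would compute $\|\scaleKer\|_{L_2}^2=\Lambda_{\phi,\rho}^{-2}\IntSph\phi(\rho^{-1}\|x-y\|)^2\,\d\mu(y)$ and handle the numerator by exactly the same slicing and substitution with $\phi^2$ in place of $\phi$; integrability of $\phi^2u^{d-1}$ follows from $\phi^2\le\|\phi\|_{L_\infty}|\phi|$, giving an upper bound $\IntSph\Pro(x-y)^2\,\d\mu(y)\le C_2\rho^d$. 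Dividing by $\Lambda_{\phi,\rho}^2\ge c_0^2\rho^{2d}$ then yields $\|\scaleKer\|_{L_2}^2\le C\rho^{-d}$, as claimed.

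The hard part will be the strictly positive lower bound on $\Lambda_{\phi,\rho}$: one must ensure the limiting one-dimensional integral does not vanish and that the boundary factor $\bigl(1-\tfrac14\rho^2u^2\bigr)^{(d-2)/2}$ stays controlled. For nonnegative RBF profiles this is transparent, but if $\phi$ changes sign the lower bound requires extra care (for instance, isolating a fixed window $[0,\delta]$ on which $\phi>0$ and estimating the tail separately); the passage to the limit and the choice of $\rho_0$ both hinge on this point.
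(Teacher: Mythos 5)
Your proposal is correct, and its core—reducing both bounds to the two-sided estimate $\Lambda_{\phi,\rho}\asymp\rho^d$ via the one-dimensional zonal reduction and the substitution $u=\rho^{-1}\sqrt{2(1-t)}$—is exactly the paper's argument (the paper phrases it in spherical coordinates with $z=2\rho^{-1}\sin(\theta/2)$, which is the identical change of variables, since $\|x-y\|=2\sin(\theta/2)=\sqrt{2(1-x\cdot y)}$). Two execution details differ. First, for the lower bound on $\Lambda_{\phi,\rho}$ the paper does not invoke dominated convergence: it bounds the boundary-factor error quantitatively, $|\calR_\rho|\le\frac{d}{4}\rho\int_0^{2/\rho}\phi(z)z^d\,\d z$, using the elementary inequality $|1-(1-x)^{d/2}|\le\frac{d}{2}x$; this is precisely where the hypothesis $\int_0^\infty\phi(z)z^d\,\d z<\infty$ (with weight $z^d$ rather than $z^{d-1}$) earns its keep, and it yields $\Lambda_{\phi,\rho}\ge\frac12\omega_d\rho^d\int_0^{2/\rho}\phi(z)z^{d-1}\,\d z$ for small $\rho$ with an explicit $\cO(\rho)$ rate, where your route gives only qualitative convergence. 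Second, for the $L_2$ bound the paper avoids redoing the slicing computation with $\phi^2$: it interpolates, $\|\scaleKer\|_{L_2}\le\|\scaleKer\|_{L_\infty}^{1/2}\|\scaleKer\|_{L_1}^{1/2}$, and uses the normalization $\|\scaleKer\|_{L_1}=1$ built into \eqref{eq:zonal_kernel}; your direct computation costs slightly more work but does not lean on the $L_1$ normalization. Finally, your closing caveat about sign changes is well taken: the paper's proof tacitly assumes $\phi\ge0$ in several places (pulling absolute values through the remainder estimate, asserting $\|\scaleKer\|_{L_1}=1$, and bounding $\|\scaleKer\|_{L_\infty}$ by $\Lambda_{\phi,\rho}^{-1}\phi(0)$), so flagging this is a point of care on your part rather than a gap relative to the paper.
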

\begin{proof}
First we note that $\Lambda_{\phi,\rho}$ is independent of $x$. We then use spherical coordinates to get
    \begin{align*}\Lambda_{\phi,\rho}=\int_{\bbS^d}\phi(\rho^{-1}\|y-x\|)\d\mu(y)
=\omega_d\int_{0}^{\pi}\phi\big(2\rho^{-1}\sin(\theta/2)\big)\sin^{d-1}(\theta)\d\theta,
\end{align*}
where $\theta$ is the smaller angle between the two vectors $x$ and $y$.
The substitution $z=2\rho^{-1}\sin(\theta/2)$ yields
\begin{align*}
    \Lambda_{\phi,\rho}=&~\omega_d\rho^d\int_{0}^{2/\rho}\phi(z)z^{d-1}\Big(1-\frac{\rho^2z^2}{4}\Big)^{d/2}\d z\\
    =&~\omega_d\rho^d\Big[\int_{0}^{2/\rho}\phi(z)z^{d-1}\d z+\calR_{\rho}\Big],
\end{align*}
where
$$\calR_{\rho}:=\int_{0}^{2/\rho}\phi(z)z^{d-1}\Big[\Big(1-\frac{\rho^2z^2}{4}\Big)^{d/2}-1\Big]\d z.$$
We can bound $\calR_{\rho}$ by
\begin{align*}
    |\calR_\rho|\leq &~ \int_{0}^{2/\rho}\phi(z)z^{d-1}\Big|\Big(1-\frac{\rho^2z^2}{4}\Big)^{d/2}-1\Big|\d z\\
    \leq &~\frac{d}{8}\rho^2\int_{0}^{2/\rho}\phi(z)z^{d+1}\d z
    \leq \frac{d}{4}\rho\int_{0}^{2/\rho}\phi(z)z^d\d z.
\end{align*}
Here we have used the inequality 
$|1-(1-x)^{d/2}|\leq \frac{d}{2}x,~~0<x<1$, 
and $\frac{1}{4}\rho^2 z^2\leq 1$ for $z\in[0,2/\rho]$.
Hence, we have 
$$\Lambda_{\phi,\rho}\geq \frac{1}{2}\omega_d\rho^d\int_{0}^{2/\rho}\phi(z)z^{d-1}\d z,$$
which leads to the uniform bound
\begin{align*}\label{eq:LinfBound}
    \|\scaleKer\|_{L_{\infty}}\leq \Lambda_{\phi,\rho}^{-1}\phi(0)\leq C\rho^{-d}.
\end{align*}
 The proof is completed by noting $\|\scaleKer\|_{L_2}\leq \|\scaleKer\|_{L_{\infty}}^{1/2}\|\scaleKer\|_{L_1}^{1/2}$ and $\|\scaleKer\|_{L_1}=1$.
\end{proof}

\section{Deterministic quasi-interpolation on the sphere}
\label{sec:sph_qi}
In this section, we discuss scaled kernel quasi-interpolation from a deterministic perspective. The whole process will be carried out in two steps, which are exposited respectively in \cref{subsec:conv_approx} and \cref{subsec:QMC_method}. Specifically, in \cref{subsec:conv_approx}, we employ spherical convolution operators to approximate target functions under \cref{assump:kernel}. In \cref{subsec:QMC_method},  we discretize the convolution integrals using a simple quasi-Monte Carlo method. In \cref{subsec:MultilevelQI}, we use a multilevel quasi-interpolation technique to accelerate the convergence.


\subsection{Convolution approximation}
\label{subsec:conv_approx}

Let $\scaleKer$ be a scaled zonal kernel defined in \eqref{eq:zonal_kernel}. Given a function $f\in\Hsig(\bbS^d)$, we define the convolution operator $\mathcal{C}_{\scaleKer}$ by
\begin{equation}\label{eq:ConvOper}
    \mathcal{C}_{\scaleKer}:f\mapsto  f*\scaleKer~~\text{with} ~~f*\scaleKer(x)=\int_{\bbS^d}f(y)\scaleKer(x\cdot y)\d\mu(y).
\end{equation}

  The following Sobolev error estimate for the convolution operator was established in \cite[Thm.~3.1]{sun-gao-sun}.
\begin{lemma}\label{lem:L2ConvErr}
	Let $f\in\Hsig(\bbS^d)$ with $\sigma>d/2$. Suppose that the scaled zonal kernel $\scaleKer$ satisfies  \cref{assump:kernel} with $0<\rho<\rho_0<1$, and $0\leq \tau\leq m\leq \sigma$. Let $\mathcal{C}_{\scaleKer}$ be the convolution operator defined in \eqref{eq:ConvOper}. Then, there exists a constant $C$ independent of $\rho$ such that
	$$\|f-\mathcal{C}_{\scaleKer}f\|_{H^{\tau}}\leq C\rho^{m-\tau}\|f\|_{\Hsig}.$$
\end{lemma}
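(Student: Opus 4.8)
The plan is to exploit the fact that convolution against a zonal kernel acts as a Fourier--Legendre multiplier. Expanding $f=\sum_{\ell=0}^{\infty}\sum_{k=1}^{Z(d,\ell)}\Hatf\sphHarm$ and invoking the Funk--Hecke formula \eqref{eq:FunkHeckeFormula} term by term, one obtains $\mathcal{C}_{\scaleKer}f=\sum_{\ell=0}^{\infty}\sum_{k=1}^{Z(d,\ell)}\HatKer\,\Hatf\,\sphHarm$, so that the error $f-\mathcal{C}_{\scaleKer}f$ has Fourier--Legendre coefficients $\bigl(1-\HatKer\bigr)\Hatf$. (The hypothesis $\sigma>d/2$ guarantees $f\in C(\bbS^d)$ via the Sobolev embedding, so the integral in \eqref{eq:ConvOper} is well defined pointwise.) By the definition of the $\Htau$-norm, this reduces the claim to the frequency-wise multiplier bound
\begin{equation*}
(1+\ell)^{2\tau}\,\bigl|1-\HatKer\bigr|^2\leq C\,\rho^{2(m-\tau)}\,(1+\ell)^{2\sigma},\qquad \ell\geq 0,
\end{equation*}
since summing this estimate against $|\Hatf|^2$ over all $(\ell,k)$ yields $\|f-\mathcal{C}_{\scaleKer}f\|_{\Htau}^2\leq C\rho^{2(m-\tau)}\|f\|_{\Hsig}^2$, and taking square roots finishes the proof. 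Thus everything hinges on the displayed scalar inequality, which I would verify by splitting at the cutoff $\ell_\rho=\lfloor\rho^{-1}-1\rfloor$.

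For the low frequencies $0\leq\ell\leq\ell_\rho$, I would invoke part (1) of \cref{assump:kernel}, namely $|1-\HatKer|\leq c_1\ell^m\rho^m$. After dividing the target inequality through by $(1+\ell)^{2\sigma}\rho^{2(m-\tau)}$ and using $\ell^{2m}\leq(1+\ell)^{2m}$, the claim becomes $(1+\ell)^{2(\tau+m-\sigma)}\rho^{2\tau}\leq C$. Here one uses that $\ell\leq\ell_\rho$ forces $1+\ell\leq\rho^{-1}$: if $\tau+m-\sigma\geq 0$ this gives $(1+\ell)^{2(\tau+m-\sigma)}\rho^{2\tau}\leq\rho^{2(\sigma-m)}\leq 1$ because $m\leq\sigma$ and $\rho<1$, while if $\tau+m-\sigma<0$ the factor $(1+\ell)^{2(\tau+m-\sigma)}\leq 1$ and $\rho^{2\tau}\leq 1$ since $\tau\geq 0$. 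Either way the low-frequency part is uniformly bounded.

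For the high frequencies $\ell>\ell_\rho$, part (2) of \cref{assump:kernel} bounds $\HatKer$ from above, and since the Fourier--Legendre coefficients of the (positive definite) kernel satisfy $\HatKer\geq 0$, one gets a uniform bound $|1-\HatKer|\leq C'$. The point is that now $\ell>\ell_\rho$ forces $1+\ell\gtrsim\rho^{-1}$, i.e. $\rho^{-1}\lesssim 1+\ell$; substituting $\rho^{-2(m-\tau)}\lesssim(1+\ell)^{2(m-\tau)}$ into the (divided-through) target inequality collapses it to $(1+\ell)^{2(m-\sigma)}\leq C$, which holds because $m\leq\sigma$. I expect the high-frequency tail to be the main obstacle: there the multiplier does not decay to zero in $\ell$, so one cannot win through smallness of $\HatKer$, and the estimate is salvaged only by trading the negative power $\rho^{-2(m-\tau)}$ against the large frequency through the cutoff relation $\rho^{-1}\lesssim 1+\ell$, with the condition $m\leq\sigma$ supplying the non-positive final exponent. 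Keeping track of the transition at $\ell_\rho$ and of the sign of $\tau+m-\sigma$ in the low-frequency regime are the places where care is needed.
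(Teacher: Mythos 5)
Your proof is correct. Note first that the paper itself does not give a proof of \cref{lem:L2ConvErr}: it imports the statement verbatim from \cite[Thm.~3.1]{sun-gao-sun}. Your frequency-wise multiplier argument is the natural self-contained proof, and it runs parallel to the argument the paper does spell out for the companion $L_\infty$ bound (\cref{thm:ConvErr}): the same reduction via Funk--Hecke to error coefficients $\bigl(1-\widehat{\varphi_{\rho}}(\ell)\bigr)\widehat{f}_{\ell,k}$, the same split at the cutoff $\ell_{\rho}$, with part (1) of \cref{assump:kernel} handling $\ell\leq\ell_{\rho}$ via $1+\ell\leq\rho^{-1}$, and part (2) handling $\ell>\ell_{\rho}$ via $\rho^{-1}<1+\ell$ together with $m\leq\sigma$. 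The main structural difference is that you prove a pointwise-in-$\ell$ bound on the multiplier and then sum against $|\widehat{f}_{\ell,k}|^2$, whereas the $L_\infty$ proof must sum a series of multipliers and therefore needs the dimension count $Z(d,\ell)\sim(1+\ell)^{d-1}$; this is exactly why your route yields the rate $\rho^{m-\tau}$ while the $L_\infty$ route loses $\rho^{-d/2}$. One small caveat: condition (2) of \cref{assump:kernel} as literally stated only bounds $\widehat{\varphi_{\rho}}(\ell)$ from above, so your uniform bound $|1-\widehat{\varphi_{\rho}}(\ell)|\leq C'$ on the high frequencies also needs a lower bound; you supply it by nonnegativity (positive definiteness) of the kernel, and the paper makes the identical move in its proof of \cref{thm:ConvErr}, where condition (2) is read as $|\widehat{\varphi_{\rho}}(\ell)|\leq c_2$. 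This is a shared, harmless implicit strengthening of the hypothesis, not a gap in your argument.
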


Next, we derive an uniform error bound for the convolution operators in terms of the scaling parameter $\rho$, which we will use frequently in the subsequent analysis.

\begin{theorem}\label{thm:ConvErr}
	Let $f\in\Hsig(\bbS^d)$ with $\sigma\geq m>d/2$. Suppose $\scaleKer$ satisfies \cref{assump:kernel} with $0<\rho<\rho_0<1$. Then there exists a constant $C>0$ independent of $\rho$ such that
	$$\|f-\mathcal{C}_{\scaleKer}f\|_{L_{\infty}}\leq C\rho^{m-d/2}\|f\|_{\Hsig}.$$
\end{theorem}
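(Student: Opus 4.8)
The plan is to work directly on the Fourier--Legendre (spectral) side rather than combining \cref{lem:L2ConvErr} with a Sobolev embedding $H^\tau \hookrightarrow L_\infty$. The latter route is tempting but lossy: the embedding requires $\tau > d/2$ strictly, and applying \cref{lem:L2ConvErr} at such $\tau$ yields only the rate $\rho^{m-\tau}$ with $\tau>d/2$; letting $\tau\downarrow d/2$ to recover the claimed exponent makes the embedding constant blow up, so the endpoint rate $\rho^{m-d/2}$ is simply unreachable this way. A direct spectral estimate, by contrast, keeps exact track of the constant and is perfectly matched to the low/high splitting built into \cref{assump:kernel}.

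First I would expand the error. By the Funk--Hecke formula \eqref{eq:FunkHeckeFormula}, $\ConvPhi f$ has Fourier--Legendre coefficients $\HatKer\,\Hatf$, so
$$ f - \ConvPhi f = \sum_{\ell=0}^{\infty} \big(1-\HatKer\big)\sum_{k=1}^{Z(d,\ell)} \Hatf \, \sphHarm. $$
Taking absolute values pointwise, applying Cauchy--Schwarz in $k$ and the addition formula \eqref{eq:AddiThm}, which gives $\sum_{k}|\sphHarm(x)|^2 = Z(d,\ell)/\omega_d$, yields for every $x\in\bbS^d$
$$ |(f-\ConvPhi f)(x)| \le \sum_{\ell=0}^{\infty} |1-\HatKer|\, \Big(\tfrac{Z(d,\ell)}{\omega_d}\Big)^{1/2} F_\ell, \qquad F_\ell := \Big(\sum_{k=1}^{Z(d,\ell)} |\Hatf|^2\Big)^{1/2}. $$
A second Cauchy--Schwarz in $\ell$ against the weight $(1+\ell)^{2\sigma}$ separates out $\|f\|_{\Hsig}^2 = \sum_\ell (1+\ell)^{2\sigma} F_\ell^2$ and reduces everything to the scalar sum
$$ S := \sum_{\ell=0}^{\infty} \frac{|1-\HatKer|^2\, Z(d,\ell)}{\omega_d\,(1+\ell)^{2\sigma}}, \qquad \text{so that}\qquad \|f-\ConvPhi f\|_{L_\infty} \le S^{1/2}\,\|f\|_{\Hsig}. $$

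The crux is then to show $S \les \rho^{2m-d}$, which I would establish by splitting $S$ at $\ell_\rho=\lfloor\rho^{-1}-1\rfloor$. For the low modes $\ell\le\ell_\rho$, \eqref{eq:assump_eq1} gives $|1-\HatKer|\le c_1\ell^m\rho^m$; using $Z(d,\ell)=\cO(\ell^{d-1})$ and the hypothesis $\sigma\ge m$ to bound $(1+\ell)^{2\sigma}\ge\ell^{2m}$, each summand is $\les\rho^{2m}\ell^{d-1}$, and $\sum_{\ell\le\ell_\rho}\ell^{d-1}\les\ell_\rho^{d}\les\rho^{-d}$, so this block is $\les\rho^{2m-d}$. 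For the high modes $\ell>\ell_\rho$, condition (2) of \cref{assump:kernel} bounds $|1-\HatKer|$ uniformly, so the summand is $\les\ell^{d-1-2\sigma}$; since $\sigma\ge m>d/2$ gives $d-1-2\sigma<-1$, the tail converges and is $\les\ell_\rho^{d-2\sigma}\les\rho^{2\sigma-d}\le\rho^{2m-d}$ (using $\rho<1$ and $\sigma\ge m$). Combining the two pieces gives $S\les\rho^{2m-d}$, hence $S^{1/2}\les\rho^{m-d/2}$, which is the assertion.

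The main obstacle is conceptual rather than computational: recognizing that the sharp $L_\infty$ rate sits exactly at the failed Sobolev endpoint and therefore demands the direct spectral argument. Within that argument the only delicate points are the two balancing uses of $\sigma\ge m>d/2$ --- the inequality $\sigma\ge m$ is what lets the low-frequency block absorb the $\rho^{-d}$ coming from the dimension count, while the strict $\sigma>d/2$ (equivalently $2\sigma>d$) is precisely what guarantees convergence of the high-frequency tail. One should also check that $|1-\HatKer|$ is genuinely bounded on the tail, which follows from condition (2) (the kernels at hand satisfy $0\le\HatKer\le c_2$), and note that the $\ell=0$ term vanishes since $\HatKer\big|_{\ell=0}=1$ by the normalization $\|\scaleKer\|_{L_1}=1$.
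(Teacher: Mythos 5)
Your proposal is correct and follows essentially the same route as the paper's own proof: a direct Fourier--Legendre expansion of $f-\mathcal{C}_{\scaleKer}f$, Cauchy--Schwarz together with the addition formula to reduce the $L_\infty$ bound to the scalar sum $S=\omega_d^{-1}\sum_{\ell}|1-\widehat{\scaleKer}(\ell)|^2 Z(d,\ell)(1+\ell)^{-2\sigma}$, and the identical low/high frequency split at $\ell_\rho$ using conditions (1) and (2) of \cref{assump:kernel} to get $S\lesssim \rho^{2m-d}$. The only cosmetic difference is that you apply Cauchy--Schwarz in two stages (first in $k$, then in $\ell$) where the paper does it once over the double index, which yields the same bound.
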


\begin{proof}
	For each fix $x \in \mathbb S^d$, we have	$$|f(x)-\mathcal{C}_{\scaleKer}f(x)|\leq\sum_{\ell=0}^{\infty}\sum_{k=1}^{Z(d,\ell)}\big|1-\widehat{\scaleKer}(\ell)\big|\big|\Hatf\sphHarm(x)\big|.
	$$
We apply the Cauchy-Schwarz inequality to get
	\begin{equation}\label{eq:linferrorineq1}
		\begin{aligned}
			|f(x)-\mathcal{C}_{\scaleKer}f(x)|\leq&~
  \Big(\sum_{\ell=0}^{\infty}\sum_{k=1}^{Z(d,\ell)}|1-\widehat{\scaleKer}(\ell)|^2|\sphHarm(x)|^2 (1+\ell)^{-2\sigma}\Big)^{1/2}\\
&\quad\cdot\Big(\sum_{\ell=0}^{\infty}\sum_{k=1}^{Z(d,\ell)}(1+\ell)^{2\sigma}|\Hatf|^2\Big)^{1/2}\\\leq&~	
             ||f||_{H^{\sigma}}\cdot\Big(\sum_{\ell=0}^{\infty}|1-\widehat{\scaleKer}(\ell)|^2(1+\ell)^{-2\sigma}\sum_{k=1}^{Z(d,\ell)}|\sphHarm(x)|^2\Big) ^{1/2} \\  
             = &~\omega_d^{-1/2}||f||_{H^{\sigma}}\cdot\Big(\sum_{\ell=0}^{\infty}|1-\widehat{\scaleKer}(\ell)|^2Z(d,\ell)(1+\ell)^{-2\sigma}\Big) ^{1/2}.
		\end{aligned}
	\end{equation}
Here we have used the identity
\[
\sum_{k=1}^{Z(d,\ell)}|\sphHarm(x)|^2 = \frac{Z(d,\ell)}{\omega_d},
\]
which can be derived from the summation formula for spherical harmonics; see \eqref{eq:AddiThm}.
   	In the remainder of the proof, we estimate the last sum in \eqref{eq:linferrorineq1}. We break it into two parts indexed by $\ell\leq \ell_{\rho}$ and $\ell>\ell_{\rho}$, respectively. For the first part $\ell\leq \ell_{\rho}$, we use  \cref{assump:kernel} to have $|1-\widehat{\scaleKer}(\ell)|\leq c_1(\rho\ell)^m$ for $\ell\leq \ell_{\rho}$. We then use the relation $\sum_{\ell=0}^{L}Z(d,\ell)=Z(d+1,L)$ from \cite{muller1966SphHarm} to bound the sum as follows:
	\begin{equation*}
		\begin{aligned}
			\sum_{\ell\leq \ell_{\rho}}|1-\widehat{\scaleKer}(\ell)|^2Z(d,\ell)(1+\ell)^{-2\sigma}
			\leq&~ c_1^2\rho^{2m}\sum_{\ell\leq \ell_{\rho}}\ell^{2m}Z(d,\ell)(1+\ell)^{-2\sigma}\\
			 \leq &~c_1^2\rho^{2m}\sum_{\ell\leq \ell_{\rho}}Z(d,\ell)
			= c_1^2\rho^{2m}Z(d+1,\ell_{\rho})\\
			\leq&~  c_1^2\rho^{2m}C(1+\ell_{\rho})^{d}\leq c_1^2C\rho^{2m-d},
		\end{aligned}
	\end{equation*}
	where we have used $m\leq \sigma$, $\ell^{2m}(1+\ell)^{-2\sigma}<1$ and $Z(d,\ell)\sim (1+\ell)^{d-1}$.
	
	For the second sum $\ell>\ell_{\rho}$, we use the uniform bound $|\widehat{\scaleKer}(\ell)|\leq c_2$ from Condition (2) in \cref{assump:kernel}. This yields
	\begin{equation*}
		\begin{aligned}
			\sum_{\ell>\ell_{\rho}}|1-\widehat{\scaleKer}(\ell)|^2Z(d,\ell)(1+\ell)^{-2\sigma}
			\leq &~ (1+c_2)^2\sum_{\ell>\ell_{\rho}}Z(d,\ell)(1+\ell)^{-2\sigma}\\
			\leq &~ C(1+c_2)^2\sum_{\ell>\ell_{\rho}}(1+\ell)^{d-1}(1+\ell)^{-2\sigma}\\
			\leq &~ C(1+c_2)^2 \ell_{\rho}^{-2\sigma+d}\leq C(1+c_2)^2\rho^{2\sigma-d}.
		\end{aligned}
	\end{equation*}
	Combining the two estimates and noting that $m\leq \sigma$, we conclude that
	$$|f(x)-\mathcal{C}_{\scaleKer}f(x)|\leq C\omega_d^{-1/2}\Big(c_1\rho^{m-d/2}+(1+c_2)\rho^{\sigma-d/2}\Big)\|f\|_{\Hsig},$$
	which completes the proof.
\end{proof}

\begin{corollary}\label{corol:GeneralLp_err}
    Let $f\in\Hsig(\bbS^d)$ with $\sigma\geq m>d/2$. Suppose $\scaleKer$ satisfies \cref{assump:kernel} with $0<\rho<\rho_0<1$. Then for $2\leq p\leq\infty$, there exists a constant $C>0$ independent of $\rho$ such that
	$$\|f-\mathcal{C}_{\scaleKer}f\|_{L_p}\leq C\rho^{m-d\big(\frac{1}{2}-\frac{1}{p}\big)}\|f\|_{\Hsig}.$$
\end{corollary}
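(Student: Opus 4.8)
The plan is to obtain the stated $L_p$ estimate by interpolating between the two endpoint cases $p=2$ and $p=\infty$, both of which are already available. For the $L_\infty$ endpoint, \cref{thm:ConvErr} gives directly
$$\|f-\mathcal{C}_{\scaleKer}f\|_{L_\infty}\leq C\rho^{m-d/2}\fHsig.$$
For the $L_2$ endpoint, I would invoke \cref{lem:L2ConvErr} with $\tau=0$ (so that $H^0=L_2$). This is legitimate because the Corollary's hypotheses $\sigma\geq m>d/2$ imply both $0\leq\tau=0\leq m\leq\sigma$ and $\sigma>d/2$, exactly the standing assumptions of the lemma; it then yields
$$\|f-\mathcal{C}_{\scaleKer}f\|_{L_2}\leq C\rho^{m}\fHsig.$$

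With these two endpoints in hand, I would apply the standard log-convexity (interpolation) inequality for Lebesgue norms: for $2\leq p\leq\infty$ and any $g\in L_\infty(\bbS^d)$,
$$\|g\|_{L_p}\leq\|g\|_{L_2}^{2/p}\,\|g\|_{L_\infty}^{1-2/p},$$
which follows at once from $\int|g|^p=\int|g|^2|g|^{p-2}\leq\|g\|_{L_\infty}^{p-2}\|g\|_{L_2}^2$, i.e. from the interpolation relation $\tfrac1p=\tfrac{1-\theta}{2}+\tfrac{\theta}{\infty}$ with $\theta=1-2/p$. Taking $g=f-\mathcal{C}_{\scaleKer}f$ and substituting the two endpoint bounds, the powers of $\rho$ combine as
$$m\cdot\frac{2}{p}+\Big(m-\frac{d}{2}\Big)\Big(1-\frac{2}{p}\Big)=m-\frac{d}{2}+\frac{d}{p}=m-d\Big(\frac{1}{2}-\frac{1}{p}\Big),$$
while the exponents of $\fHsig$ add to $\tfrac{2}{p}+\bigl(1-\tfrac{2}{p}\bigr)=1$ and the two generic constants multiply to a single $\rho$-independent constant. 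This is precisely the claimed bound.

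The argument contains no genuine obstacle: both endpoint estimates are quoted from results established earlier in the excerpt, and the only computation is the elementary exponent bookkeeping displayed above. The single point warranting care is the verification that the hypotheses of \cref{lem:L2ConvErr} are actually met at $\tau=0$ — in particular that the strengthened condition $m>d/2$ (rather than the weaker $m\geq 0$ of \cref{assump:kernel}) is what guarantees $\sigma>d/2$, so that the $L_2$ endpoint is available. As a sanity check on the conclusion, one notes that $m-d(\tfrac12-\tfrac1p)$ decreases from $m$ (at $p=2$) to $m-d/2>0$ (at $p=\infty$) and is therefore strictly positive throughout $2\leq p\leq\infty$, confirming that the estimate is a bona fide convergence rate as $\rho\to 0$.
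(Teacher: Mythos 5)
Your proof is correct and is essentially identical to the paper's own argument: both interpolate the error between the $L_2$ endpoint from \cref{lem:L2ConvErr} (with $\tau=0$) and the $L_\infty$ endpoint from \cref{thm:ConvErr} via the inequality $\|g\|_{L_p}\leq\|g\|_{L_2}^{2/p}\|g\|_{L_\infty}^{1-2/p}$, and the exponent bookkeeping matches. Your additional remarks (the derivation of the log-convexity inequality and the verification that $\sigma\geq m>d/2$ makes the $L_2$ endpoint available) are sound but simply make explicit what the paper leaves implicit.
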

\begin{proof}
    We use the interpolation inequality to write
    $$\|f-\mathcal{C}_{\scaleKer}f\|_{L_p}\leq \|f-\mathcal{C}_{\scaleKer}f\|_{L_2}^{2/p}\|f-\mathcal{C}_{\scaleKer}f\|_{L_{\infty}}^{1-2/p}.$$
    This together with the estimates in  \cref{lem:L2ConvErr} and \cref{thm:ConvErr} leads to
    \begin{align*}
        \|f-\mathcal{C}_{\scaleKer}f\|_{L_p}\leq C\big(\rho^{m}\|f\|_{\Hsig}\big)^{2/p}\cdot \big(\rho^{m-d/2}\|f\|_{\Hsig}\big)^{1-2/p}
        =C\rho^{m-d\big(\frac{1}{2}-\frac{1}{p}\big)}\|f\|_{\Hsig},
    \end{align*}
    which is the desired result.
\end{proof}

To construct the final scaled kernel quasi-interpolant, we discretize the spherical convolution integral in \eqref{eq:ConvOper}. Let $X=\{x_1,\ldots,x_N\}\subset \bbS^d$ be a set of points on the sphere. Using the spherical quadrature rules $\{x_j,\alpha_j\}_{j=1}^{N}$, where $\alpha_j$ are the quadrature weights, we define the spherical quasi-interpolant as
\begin{equation}\label{eq:SphQi}
	\cQ_{\scaleKer} f(x)=\sum_{j=1}^ {N}\alpha_jf(x_j)\scaleKer(x\cdot x_j).
\end{equation}

Authors of \cite{sun-gao-sun}  employed quadrature rules of high order to discretize the convolution integrals, which inevitably suffers from computational instability. In this paper, we broaden the method’s applicability in two new directions. First, we take a deterministic approach based on quasi-Monte Carlo quadrature rules from \cite{brauchart-2014MCoM-qmc}.
Second, we develop a stochastic discretization using the Monte Carlo integration method, which we will analyze in \cref{sec:Sto_qi_sph}.

\subsection{Quasi-Monte Carlo method}
\label{subsec:QMC_method}
A point set $X^q:=\{x_j\}_{j=1}^{N}$ on $\bbS^d$ is called a sequence of QMC designs \cite{brauchart-2014MCoM-qmc} for $H^{\sigma}(\bbS^d)$ ($\sigma>d/2$), if there exists a constant $C(\sigma,d)>0$ independent of $N$ such that
\begin{equation}\label{eq:QMC_estimate}
	\sup_{f\in H^{\sigma}(\bbS^d)}\Big|\frac{1}{N}\sum_{j=1}^{N}f(x_j)-\IntSph f(x)\d\mu(x)\Big|\leq C(\sigma,d)N^{-\sigma/d}\fHsig.
\end{equation}
Using such QMC designs, we construct the QMC quasi-interpolant as
\begin{equation}\label{eq:QMCQI}
	\cQ_{\scaleKer}^qf=\frac{1}{N}\sum_{j=1}^{N}f(x_j)\scaleKer(x\cdot x_j),~~x_j\in X^q,~x\in\bbS^d.
\end{equation}
 
\begin{theorem}\label{thm:QMC_error}
	Let $f\in \Hsig(\bbS^d)$ with $\sigma>d/2$, and let $\cQ_{\scaleKer}^q f$ be defined in \eqref{eq:QMCQI} with a scaled zonal kernel $\scaleKer$ satisfying \cref{assump:kernel} with $0<\rho<\rho_0<1$, $m\leq\sigma$ and \cref{assump2} with $s=\tau+\sigma$ for $0\leq \tau\leq m$. If $X^q$ is a QMC design for $\Hsig(\bbS^d)$, then there exists a constant $C>0$ independent of $\rho$ and $N$, such that
	$$\|f-\cQ_{\scaleKer}^q f\|_{H^{\tau}}\leq C\Big(\rho^{m-\tau}  +\rho^{-s}N^{-\sigma/d}\Big)\|f\|_{\Hsig}.$$
\end{theorem}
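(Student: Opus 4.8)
The plan is to insert the convolution approximant $\mathcal{C}_{\scaleKer}f$ as an intermediary and split by the triangle inequality,
\begin{equation*}
\|f-\cQ_{\scaleKer}^q f\|_{H^{\tau}}\le \|f-\mathcal{C}_{\scaleKer}f\|_{H^{\tau}}+\|\mathcal{C}_{\scaleKer}f-\cQ_{\scaleKer}^q f\|_{H^{\tau}}.
\end{equation*}
The first (approximation) term is disposed of immediately by \cref{lem:L2ConvErr}, which applies on the stated range $0\le\tau\le m\le\sigma$ and contributes the $C\rho^{m-\tau}\|f\|_{\Hsig}$ part of the bound. All the remaining work lies in the quadrature term $E:=\mathcal{C}_{\scaleKer}f-\cQ_{\scaleKer}^q f$, which is exactly the QMC integration error of the integrand $y\mapsto f(y)\scaleKer(x\cdot y)$, but measured in the stronger $H^{\tau}$-norm rather than pointwise.

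To estimate $\|E\|_{H^{\tau}}$ I would use $L_2$-duality, writing $\|E\|_{H^{\tau}}=\sup\{|\langle E,g\rangle|:\ g\in H^{-\tau},\ \|g\|_{H^{-\tau}}\le 1\}$, and it suffices to test against smooth $g$ (dense in $H^{-\tau}$). For such a $g$, interchanging summation and integration and using the symmetry $\scaleKer(x\cdot y)=\scaleKer(y\cdot x)$ together with the self-adjointness of $\mathcal{C}_{\scaleKer}$ gives
\begin{equation*}
\langle E,g\rangle=\int_{\bbS^d}f(y)\,\mathcal{C}_{\scaleKer}g(y)\,\d\mu(y)-\frac{1}{N}\sum_{j=1}^{N}f(x_j)\,\mathcal{C}_{\scaleKer}g(x_j),
\end{equation*}
so that $\langle E,g\rangle$ is the ordinary QMC integration error of the single function $h:=f\cdot\mathcal{C}_{\scaleKer}g$. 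Since $\sigma>d/2$, the space $\Hsig(\bbS^d)$ is a Banach algebra, so $h\in\Hsig$ with $\|h\|_{\Hsig}\le C\|f\|_{\Hsig}\|\mathcal{C}_{\scaleKer}g\|_{\Hsig}$, and the QMC design property \eqref{eq:QMC_estimate} applies to yield $|\langle E,g\rangle|\le CN^{-\sigma/d}\|f\|_{\Hsig}\|\mathcal{C}_{\scaleKer}g\|_{\Hsig}$.

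It then remains to bound $\|\mathcal{C}_{\scaleKer}g\|_{\Hsig}$ by $C\rho^{-s}\|g\|_{H^{-\tau}}$. Since $\mathcal{C}_{\scaleKer}g$ has Fourier--Legendre coefficients $\widehat{\scaleKer}(\ell)\widehat{g}_{\ell,k}$, I would write
\begin{equation*}
\|\mathcal{C}_{\scaleKer}g\|_{\Hsig}^2=\sum_{\ell,k}(1+\ell)^{2\sigma}\widehat{\scaleKer}(\ell)^2|\widehat{g}_{\ell,k}|^2\le\Big(\sup_{\ell\ge 0}(1+\ell)^{2(\sigma+\tau)}\widehat{\scaleKer}(\ell)^2\Big)\,\|g\|_{H^{-\tau}}^2 .
\end{equation*}
With $s=\tau+\sigma$, \cref{assump2} gives $\widehat{\scaleKer}(\ell)^2\le c_4^2(1+\rho\ell)^{-4s}$, and the elementary inequality $\rho(1+\ell)\le(1+\rho\ell)^2$ (valid for $0<\rho<1$, $\ell\ge0$) shows $(1+\ell)^{2s}(1+\rho\ell)^{-4s}\le\rho^{-2s}$; hence the supremum is at most $c_4^2\rho^{-2s}$ and $\|\mathcal{C}_{\scaleKer}g\|_{\Hsig}\le c_4\rho^{-s}\|g\|_{H^{-\tau}}$. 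Taking the supremum over $\|g\|_{H^{-\tau}}\le1$ gives $\|E\|_{H^{\tau}}\le C\rho^{-s}N^{-\sigma/d}\|f\|_{\Hsig}$, and combining with the first term yields the claim. The step I expect to be the main obstacle is obtaining the sharp $\rho^{-s}$ (rather than $\rho^{-s-d/2}$) dependence: a direct mode-by-mode Fourier estimate of $E$ forces one to sum $\widehat{\scaleKer}(\ell)^2(1+\ell)^{2s}Z(d,\ell)$ over $\ell$, which costs an extra power of $\rho$, whereas the duality reformulation replaces this sum by the single supremum above and restores the optimal exponent.
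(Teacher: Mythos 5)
Your proof is correct, and for the quadrature term it takes a genuinely different route from the paper's --- indeed, the ``main obstacle'' you flagged at the end is exactly where the two arguments diverge. The paper uses the same triangle-inequality split and \cref{lem:L2ConvErr} for the convolution error, but then estimates $E=\mathcal{C}_{\scaleKer}f-\cQ_{\scaleKer}^q f$ mode by mode: it applies \eqref{eq:QMC_estimate} to each product $f\,\sphHarm$ (implicitly using, as you do explicitly, that $\Hsig(\bbS^d)$ is a Banach algebra for $\sigma>d/2$), obtaining
\begin{equation*}
\|E\|_{\Htau}^2\;\le\; C^2N^{-2\sigma/d}\,\|f\|_{\Hsig}^2\sum_{\ell=0}^{\infty}\sum_{k=1}^{Z(d,\ell)}(1+\ell)^{2(\tau+\sigma)}\,|\widehat{\scaleKer}(\ell)|^2,
\end{equation*}
and then bounds this kernel sum via the native-space equivalences of \cref{lem:Equiv_Sobolev_Native}, ending with the factor $\rho^{-2s}\|\scaleKer\|_{\Phi_{\rho}}^2$. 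That sum is precisely the quantity $\sum_{\ell}Z(d,\ell)(1+\ell)^{2s}\widehat{\scaleKer}(\ell)^2$ you predicted would appear, and your concern about it is well founded: $\|\scaleKer\|_{\Phi_{\rho}}^2$ is the reproducing-kernel diagonal value $\scaleKer(1)=\sum_{\ell}\widehat{\scaleKer}(\ell)Z(d,\ell)/\omega_d$, which by \cref{assump2} (or \cref{lem:ScaleKernelNorm}) is of order $\rho^{-d}$, not $\cO(1)$; so the paper's chain of inequalities, taken literally, yields $\rho^{-s-d/2}N^{-\sigma/d}$, and the stated $\rho^{-s}$ follows only if that factor is (incorrectly) treated as a constant. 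Your duality argument --- moving $\mathcal{C}_{\scaleKer}$ onto the test function so that $\langle E,g\rangle$ becomes a single QMC error for $h=f\cdot\mathcal{C}_{\scaleKer}g$, and then bounding the multiplier by $\sup_{\ell}(1+\ell)^{2s}\widehat{\scaleKer}(\ell)^2\le c_4^2\rho^{-2s}$ using $\rho(1+\ell)\le(1+\rho\ell)^2$ --- replaces that divergent sum by a supremum and genuinely delivers the sharp exponent claimed in the theorem. In short, your route (duality plus a Fourier-multiplier bound) is not merely different from the paper's (mode-by-mode summation plus native-space equivalence); it repairs the loss of $\rho^{-d/2}$ in the paper's own argument. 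The only points worth spelling out in a final write-up are the standard $H^{\tau}$--$H^{-\tau}$ duality pairing (the paper defines $\Hsig$ only for $\sigma\ge 0$) and the routine Fubini and self-adjointness interchanges.
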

\begin{proof}
	We write
	\begin{equation}\label{eq:SobolevQMCerr1}
		\|f-\cQ_{\scaleKer}^q f\|_{H^{\tau}}\leq \|f-\mathcal{C}_{\scaleKer}f\|_{H^{\tau}}+\|\mathcal{C}_{\scaleKer}f-\cQ_{\scaleKer}^q f\|_{H^{\tau}}.
	\end{equation}
	A straightforward application of \cref{lem:L2ConvErr} results in the bound:
    \begin{equation}\label{eq:qmc_conv_error}
        \|f-\mathcal{C}_{\scaleKer}f\|_{H^{\tau}}\leq C \rho^{m-\tau} \|f\|_{\Hsig}.
    \end{equation}
	To estimate the second term at the right-hand side of \eqref{eq:SobolevQMCerr1}, we make use of its Fourier-Legendre expansion
\begin{equation*}\label{eq:ErrDefinition}
		\begin{aligned}
			\|f*\scaleKer-\cQ_{\scaleKer}^q f\|_{H^{\tau}}^2=&~\sum_{\ell=0}^{\infty}\sum_{k=1}^{Z(d,\ell)}(1+\ell)^{2\tau}\big|\Hatf\HatKer-\HatQi\big|^2,
		\end{aligned} 
	\end{equation*}
	where 
	\begin{equation*}
		\begin{aligned}
			\HatQi 
			=~\widehat{\scaleKer}(\ell)\frac{1}{N}\sum_{j=1}^Nf(x_j)Y_{\ell,k}(x_j).
		\end{aligned}
	\end{equation*}
    This gives
	\begin{equation*}\label{eq:IntErrSobolevnorm}
		\begin{aligned}
			\|f*\scaleKer-\cQ_{\scaleKer}^q f\|_{\Htau}^2=\sum_{\ell=0}^{\infty}\sum_{k=1}^{Z(d,\ell)}(1+\ell)^{2\tau}|\widehat{\scaleKer}(\ell)|^2\cdot\big|\widehat{f}_{\ell,k}-\frac{1}{N}\sum_{j}f(x_j)\sphHarm(x_j)\big|^2.
		\end{aligned}
	\end{equation*}
	
    By \eqref{eq:QMC_estimate}, the QMC quadrature error for each Fourier-Legendre coefficient satisfies
	\begin{equation*}
		\begin{aligned}
			\big|\widehat{f}_{\ell,k}-\frac{1}{N}\sum_{j=1}^Nf(x_j)\sphHarm(x_j)\big|
			\leq  ~C(\sigma,d)N^{-\sigma/d}\|f\|_{H^{\sigma}}\|\sphHarm\|_{H^{\sigma}}.
		\end{aligned}
	\end{equation*}
	Using the Sobolev norm relation for spherical harmonics
	$\|\mathcal{Y}_{\ell,k}\|_{\Hsig}^2=(1+\ell)^{2\sigma},$
	we get
	\begin{equation*}
		\begin{aligned}
			&~\|f*\scaleKer-\cQ_{\scaleKer}^q f\|_{\Htau}^2 \\ \leq&~C^2(\sigma,d)N^{-2\sigma/d}\|f\|_{\Hsig}^2\sum_{\ell=0}^{\infty}\sum_{k=1}^{Z(d,\ell)}(1+\ell)^{2(\tau+\sigma)}|\widehat{\scaleKer}(\ell)|^2 \\
            =&~C^2(\sigma,d)N^{-2\sigma/d}\|f\|_{\Hsig}^2\|\scaleKer\|_{H^s}^2\\
			\leq &~C^2(\sigma,d)N^{-2\sigma/d}c_4^{1/2}(c_4/c_3)^{1/2}\rho^{-2s}\|f\|_{\Hsig}^2\|\scaleKer\|_{\Phi_{\rho}}^2,
		\end{aligned}
	\end{equation*}
    where the last inequality follows from \cref{lem:Equiv_Sobolev_Native}.
	This completes the proof.
\end{proof}

\begin{corollary}\label{corol:QMCError}
	Suppose that the assumptions of \cref{thm:QMC_error} hold, Pick $\rho=\mathcal{O}(N^{-\frac{\sigma}{(\sigma+m)d}})$, then there exists a constant $C>0$ independent of $\rho$ and $N$ such that
	$$\|f-\cQ_{\scaleKer}^q f\|_{H^{\tau}}\leq C N^{-\frac{(m-\tau)\sigma}{(m+\sigma)d}}\|f\|_{\Hsig}.$$
\end{corollary}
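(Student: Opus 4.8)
The plan is to derive this directly from the two-term estimate of \cref{thm:QMC_error} by optimizing over the free scaling parameter $\rho$; no new analytic input is needed beyond balancing. First I would invoke \cref{thm:QMC_error}, recalling that its hypotheses fix $s=\tau+\sigma$, so that the bound reads
\begin{equation*}
\|f-\cQ_{\scaleKer}^q f\|_{\Htau}\leq C\big(\rho^{m-\tau}+\rho^{-(\tau+\sigma)}N^{-\sigma/d}\big)\fHsig.
\end{equation*}
The structure of this inequality exhibits the usual competition: the first (convolution) term decreases as $\rho\to 0$, while the second (QMC discretization) term grows as $\rho\to 0$ because of the native-space penalty $\rho^{-s}$ coming from \cref{lem:Equiv_Sobolev_Native}. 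The optimal $\rho$ is thus the one that equilibrates the two contributions.

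Next I would balance the two terms by setting $\rho^{m-\tau}=\rho^{-(\tau+\sigma)}N^{-\sigma/d}$, which simplifies to $\rho^{m+\sigma}=N^{-\sigma/d}$ and hence $\rho=N^{-\sigma/((m+\sigma)d)}$; this is exactly the prescribed choice $\rho=\mathcal{O}(N^{-\sigma/((\sigma+m)d)})$. I would then substitute back: the first term becomes $\rho^{m-\tau}=N^{-(m-\tau)\sigma/((m+\sigma)d)}$, matching the claimed rate on the nose. A one-line exponent computation shows the second term equals $N^{(\tau+\sigma)\sigma/((m+\sigma)d)-\sigma/d}=N^{-(m-\tau)\sigma/((m+\sigma)d)}$ as well, so both pieces are of the same order and their sum is controlled by a constant multiple of $N^{-(m-\tau)\sigma/((m+\sigma)d)}\fHsig$.

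Since the argument is purely a substitution-and-balance step applied to an already-proven estimate, there is no genuine obstacle. The only point meriting a moment of care is confirming that the discretization term, which superficially looks larger because of the $\rho^{-(\tau+\sigma)}$ blow-up, in fact collapses to the very same rate once the $N^{-\sigma/d}$ factor is absorbed; this is precisely what the choice $\rho^{m+\sigma}=N^{-\sigma/d}$ guarantees, and verifying the cancellation completes the proof.
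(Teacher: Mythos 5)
Your proposal is correct and is exactly the intended argument: the paper states this corollary without proof as an immediate consequence of \cref{thm:QMC_error} (with $s=\tau+\sigma$), obtained by balancing $\rho^{m-\tau}$ against $\rho^{-(\tau+\sigma)}N^{-\sigma/d}$, which is precisely what you did. Your exponent computation confirming both terms collapse to $N^{-(m-\tau)\sigma/((m+\sigma)d)}$ is accurate, so nothing is missing.
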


\subsection{Multilevel quasi-interpolation on the sphere}
\label{subsec:MultilevelQI}
In this section, we present a multilevel quasi-interpolation scheme to approximate functions defined on the unit sphere $\bbS^d$. 
Our approach is realized via an error correction transform, where the final approximant is constructed by combining quasi-interpolants of graded levels. Compared with the multiscale scheme in the literature \cite{gia_2010SINUM_multiscale,legia_2012ACHA_multiscale,sharon-2023SISC-multiscale}, a key advantage of our approach is that it does not require the solution of any linear system, which can be computationally efficient for large-scale problems.

Our approach is based on the construction of a sequence of quasi-uniform point QMC sets $X_1,X_2,\ldots$ on the sphere $\bbS^d$, where the fill distance $h_j$ of each set $X_j$ satisfies the relation $h_{j+1} \approx \delta h_j$ for some fixed factor $\delta \in (0,1)$. This ensures a uniform decrease in the fill distance as the level $j$ increases. The quasi-uniformity of the point sets is characterized by the existence of a constant $c_q \geq 1$ such that the separation distance $q_j$ and the fill distance $h_j$ are related by $q_j \leq h_j \leq c_q q_j$ and $|X_j|=\mathcal{O}(h_j^{-d})$.  

The quasi-interpolation operator $Q_{X_j, \rho_j}$ for the $j$-th scaled kernel $\varphi_{\rho_j}$ is defined as
$$Q_{X_j,\rho_j}f(x)=\frac{1}{|X_j|}\sum_{x_k\in X_j}f(x_k)\varphi_{\rho_j}(x\cdot x_k).$$

The idea behind the multilevel scheme is a simple residual correction method. We start by defining two sequences of operators $\cM_j$ and $\cE_j$, where $\cE_j$ describes the error at level $j$ and $\cM_j$ represents the multilevel approximation at level $j.$  By setting $\cM_0f=0$ and $\cE_0f=f$ and for $j=1,2,\ldots$, we can compute
$$\cM_jf=\cM_{j-1}f+Q_{X_j,\rho_j}\cE_{j-1}f,$$
$$\cE_jf=\cE_{j-1}f-Q_{X_j,\rho_j}\cE_{j-1}f.$$

Building upon the multilevel framework developed by Franz and Wendland \cite{franz-wendland2023multilevel} for $\bbR^d$, we now establish analogous recursive relations for spherical quasi-interpolation on $\bbS^d$.
\begin{proposition}
	The approximation and error operators of the spherical multiscale quasi-interpolation scheme satisfy the following recursive relations
	$$\cM_n=\sum_{j=1}^nQ_{X_j,\rho_j}\prod_{\ell=1}^{j-1}\big(I-Q_{X_{\ell},\rho_{\ell}}\big),~~\cE_n=\prod_{j=1}^n\big(I-Q_{X_j,\rho_j}\big),$$
	where $I$ denotes the identity operator.
\end{proposition}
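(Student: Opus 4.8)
The plan is to prove both identities by induction on $n$, starting with the error operator $\cE_n$ since the formula for $\cM_n$ can then be obtained by a straightforward telescoping argument. The key observation is that the defining recursion for the error can be rewritten as a single operator factor applied on the left:
$$\cE_j f = \cE_{j-1}f - Q_{X_j,\rho_j}\cE_{j-1}f = \big(I - Q_{X_j,\rho_j}\big)\cE_{j-1}f,$$
so that $\cE_j = (I - Q_{X_j,\rho_j})\,\cE_{j-1}$ as operators. Starting from the base case $\cE_0 = I$ (since $\cE_0 f = f$) and unfolding this recursion $n$ times yields
$$\cE_n = \big(I - Q_{X_n,\rho_n}\big)\big(I - Q_{X_{n-1},\rho_{n-1}}\big)\cdots\big(I - Q_{X_1,\rho_1}\big) = \prod_{j=1}^n\big(I - Q_{X_j,\rho_j}\big),$$
which is the claimed formula. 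The one point requiring care is the ordering of the factors: because the operators $Q_{X_j,\rho_j}$ need not commute, the product notation $\prod_{j=1}^n$ must be understood with the factor indexed by $j=n$ on the left and $j=1$ on the right, exactly as produced by the recursion. I would state this ordering convention explicitly to avoid ambiguity.

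For the approximation operator, I would sum the recursion $\cM_j = \cM_{j-1} + Q_{X_j,\rho_j}\cE_{j-1}$ over $j=1,\dots,n$. Since $\cM_0 = 0$, this telescopes to
$$\cM_n = \sum_{j=1}^n Q_{X_j,\rho_j}\,\cE_{j-1}.$$
Substituting the already-established formula $\cE_{j-1} = \prod_{\ell=1}^{j-1}(I - Q_{X_\ell,\rho_\ell})$ into each summand gives precisely
$$\cM_n = \sum_{j=1}^n Q_{X_j,\rho_j}\prod_{\ell=1}^{j-1}\big(I - Q_{X_\ell,\rho_\ell}\big),$$
as desired, where the empty product for $j=1$ is interpreted as the identity $I$.

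As a consistency check that I would include, one can verify the conservation identity $\cM_n + \cE_n = I$ for every $n$ directly from the recursions: adding them gives $\cM_n + \cE_n = \cM_{n-1} + \cE_{n-1}$, and since $\cM_0 + \cE_0 = 0 + I = I$, the invariant holds for all $n$. This confirms that $\cE_n$ is genuinely the error operator associated with the approximation $\cM_n$, and it provides an independent cross-check on the two closed-form expressions. I do not anticipate a genuine obstacle here; the entire argument is elementary induction, and the only subtlety worth flagging is the noncommutativity of the operator factors, which forces attention to the product ordering but creates no real difficulty once the convention is fixed.
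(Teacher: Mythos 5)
Your proof is correct and follows essentially the same route as the paper: unfold the recursion $\cE_j = (I - Q_{X_j,\rho_j})\cE_{j-1}$ to get the product formula, then obtain $\cM_n = \sum_{j=1}^n Q_{X_j,\rho_j}\cE_{j-1}$ from the update rule (you via telescoping, the paper via explicit induction) and substitute the established expression for $\cE_{j-1}$. Your two additions—fixing the ordering convention for the noncommuting factors, which the paper's product notation leaves implicit, and the invariant $\cM_n + \cE_n = I$ as a cross-check—are sound refinements rather than a different argument.
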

\begin{proof}
	We establish both identities by mathematical induction. The base case $\cE_1=I-Q_{X_1,\rho_1}$ follows immediately from the definition. The inductive step follows from the telescoping relation
	$$\cE_{n+1}=(I-Q_{X_{n+1},\rho_{n+1}})\cE_n,$$
	which directly yields the product formula. 
	
	For the approximation operators $\cM_n$, we have 
	$$\cM_1=\cM_0+Q_{X_1,\rho_1}\cE_0=Q_{X_1,\rho_1}.$$
	Assuming the formula holds for level $n$, the update rule
	$$\cM_{n+1}=\cM_{n}+Q_{X_{n+1},\rho_{n+1}}\cE_{n}$$
	combines with the inductive hypothesis to give
	\begin{equation*}
		\cM_{n+1}=\sum_{j=1}^nQ_{X_j,\rho_j}\prod_{\ell=1}^{j-1}\big(I-Q_{X_{\ell},\rho_{\ell}}\big)+Q_{X_{n+1},\rho_{n+1}}\prod_{j=1}^n\big(I-Q_{X_j,\rho_j}\big),
	\end{equation*}
	which completes the proof.
\end{proof}

The following theorem establishes the convergence properties of the proposed multilevel quasi-interpolation method on the sphere.

\begin{theorem}\label{thm:MultilevelQI}
Let $\{X_j\}_{j=1}^n \subseteq \mathbb{S}^d$ be a sequence of QMC designs with fill-distances $\{h_j\}_{j=1}^n$ satisfying $c_{\delta}\delta h_j\leq h_{j+1}\leq \delta h_j$ for some $\delta \in (0,1)$ and $0<c_{\delta}\leq 1$. Let the scale parameters be chosen as $\rho_j = \nu h_j^{\scriptscriptstyle 1/2}$ with $\nu > 1$. Assume that the kernel $\scaleKer$ satisfies \cref{assump:kernel} with $m=\sigma$ and \cref{assump2} with $s = 2\sigma$. Let the multilevel quasi-interpolation approximation $\cM_nf$ be defined as in \cref{alg:multilevel_quasi_interpolation}. Then, there exists a constant $C>0$ such that
\begin{equation*}
\|f - \cM_nf\|_{L_2} \leq C h_1^{\sigma/2}\beta^{n-1} \|f\|_{\Hsig},
\end{equation*}
where $\beta = C_{\nu}\delta^{\sigma/2}$ with $C_{\nu}=1+\nu^{-2\sigma}$.
\end{theorem}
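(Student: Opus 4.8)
The plan is to exploit the product representation $f-\cM_n f=\cE_n f=\prod_{j=1}^n\big(I-Q_{X_j,\rho_j}\big)f$ from the preceding proposition and to bound $\|\cE_n f\|_{L_2}$ by propagating the residual through the levels. The essential difficulty is that the final error is measured in $L_2$, whereas every operator $Q_{X_j,\rho_j}$ relies on point evaluations and hence on $\Hsig$-regularity ($\sigma>d/2$). I would therefore run the induction in the $\Hsig$-norm and cash in the smoothing gain in the $L_2$-norm only at the very last level. The backbone is two single-level estimates for $g\in\Hsig(\bbS^d)$: a stability bound $\|(I-Q_{X_j,\rho_j})g\|_{\Hsig}\leq C_\nu\|g\|_{\Hsig}$ and an approximation bound $\|(I-Q_{X_j,\rho_j})g\|_{L_2}\lesssim \rho_j^{\sigma}\|g\|_{\Hsig}$.

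For the stability bound I would split $(I-Q_{X_j,\rho_j})g=(g-\mathcal{C}_{\varphi_{\rho_j}}g)+(\mathcal{C}_{\varphi_{\rho_j}}g-Q_{X_j,\rho_j}g)$. The convolution defect supplies the factor $1$: the normalization \eqref{eq:Lambda} forces the convolution eigenvalue at degree $0$ to equal $\int_{\bbS^d}\varphi_{\rho_j}\,\d\mu=1$, and for the nonnegative positive-definite kernels under consideration $0\leq\widehat{\varphi_{\rho_j}}(\ell)\leq 1$, so $|1-\widehat{\varphi_{\rho_j}}(\ell)|\leq 1$ and $\|g-\mathcal{C}_{\varphi_{\rho_j}}g\|_{\Hsig}\leq\|g\|_{\Hsig}$. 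The quadrature defect is governed by \cref{thm:QMC_error} with $\tau=\sigma$, which is admissible precisely because the kernel satisfies \cref{assump2} with $s=2\sigma=\tau+\sigma$; using $|X_j|=\mathcal{O}(h_j^{-d})$ the quadrature term $\rho_j^{-2\sigma}|X_j|^{-\sigma/d}$ collapses to $\nu^{-2\sigma}$ under the scaling $\rho_j=\nu h_j^{1/2}$, matching the second summand of $C_\nu=1+\nu^{-2\sigma}$. For the approximation bound the convolution defect gives $C\rho_j^{\sigma}$ by \cref{lem:L2ConvErr} (taking $\tau=0$, $m=\sigma$), which equals $C\nu^{\sigma}h_j^{\sigma/2}$; the $L_2$ quadrature defect I would bound by the same substitution and verify that its order in $h_j$ is no larger than $h_j^{\sigma/2}$ (see the final paragraph).

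With these two estimates I would iterate the stability bound on the first $n-1$ factors to get $\|\cE_{n-1}f\|_{\Hsig}\leq C_\nu^{\,n-1}\|f\|_{\Hsig}$, and then apply the $L_2$-approximation bound at level $n$ to $g=\cE_{n-1}f$:
\[
\|\cE_n f\|_{L_2}=\|(I-Q_{X_n,\rho_n})\cE_{n-1}f\|_{L_2}\leq C\rho_n^{\sigma}\|\cE_{n-1}f\|_{\Hsig}\leq C\nu^{\sigma}h_n^{\sigma/2}C_\nu^{\,n-1}\|f\|_{\Hsig}.
\]
Converting the fill distance of the last level through $h_{j+1}\leq\delta h_j$ telescopes to $h_n^{\sigma/2}\leq\delta^{(n-1)\sigma/2}h_1^{\sigma/2}$, and regrouping the two geometric factors yields $C_\nu^{\,n-1}\delta^{(n-1)\sigma/2}=\beta^{\,n-1}$ together with the residual prefactor $h_1^{\sigma/2}$, which is exactly the claimed bound $\|f-\cM_n f\|_{L_2}\leq C h_1^{\sigma/2}\beta^{\,n-1}\|f\|_{\Hsig}$. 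Thus the $C_\nu$ in $\beta$ originates from the $n-1$ stability steps on levels $1,\dots,n-1$, while $\delta^{\sigma/2}$ originates from the geometric decay of the fill distance fed into the single $L_2$-approximation step at level $n$.

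The main obstacle is the stability estimate, and specifically the demand that the compounding per-level factor be \emph{exactly} $C_\nu$ rather than a generic $C\,C_\nu$ with $C>1$: the latter would accumulate to $C^{\,n-1}$ and could overwhelm $\delta^{(n-1)\sigma/2}$, breaking convergence. This is why the positive-definiteness/normalization argument delivering the sharp constant $1$ for the convolution defect is indispensable, and why the quadrature constant must be tracked cleanly rather than absorbed into a generic bound. A secondary delicate point is the final-level $L_2$ estimate: the convolution part is clean through \cref{lem:L2ConvErr}, but the $L_2$ quadrature defect must be reconciled with the kernel decay $s=2\sigma$, so I would take care to bound it at the smoothness level where \cref{thm:QMC_error} applies, or estimate it directly via $\|\varphi_{\rho_j}\|_{\Hsig}$, ensuring in either case that its order in $h_j$ does not exceed $h_j^{\sigma/2}$.
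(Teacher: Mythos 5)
Your proposal is structurally the paper's own proof: the same two single-level estimates (an $L_2$ approximation bound of order $h_j^{\sigma/2}$ and an $\Hsig$-stability bound with per-level factor of the form $1+\nu^{-2\sigma}$, obtained from \cref{thm:QMC_error} with $\tau=0$ and $\tau=\sigma$), the same iteration of the stability bound over levels $1,\dots,n-1$, the same single $L_2$ step at level $n$ applied to $\cE_{n-1}f$, and the same telescoping $h_n\leq\delta^{n-1}h_1$; these are exactly the paper's estimates \eqref{eq:multilevel_err1}--\eqref{eq:multilevel_err4}.

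However, the two points you defer are genuine gaps, and they cannot be closed in the form you require. First, the per-level factor cannot be made exactly $C_\nu=1+\nu^{-2\sigma}$. Your normalization argument does yield the sharp constant $1$ for the convolution defect of a nonnegative, positive definite kernel (since $|P_\ell(d+1;t)|\leq 1$ gives $0\leq\widehat{\scaleKer}(\ell)\leq\widehat{\scaleKer}(0)=1$, though nonnegativity and positive definiteness are properties of the examples, not consequences of \cref{assump:kernel} or \cref{assump2}); but the quadrature defect unavoidably carries the QMC-design constant $C(\sigma,d)$ from \eqref{eq:QMC_estimate}, the constants $c_3,c_4$ from \cref{lem:Equiv_Sobolev_Native}, and the quasi-uniformity constant hidden in $N_j\gtrsim h_j^{-d}$, so the honest factor is $1+C\nu^{-2\sigma}$ with a generic $C$ --- which, by your own (correct) observation, compounds to $C^{n-1}$ and escapes the claimed $\beta^{n-1}$. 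Second, your fallback for the last-level $L_2$ quadrature defect does not work out: applying \cref{thm:QMC_error} at $\tau=0$ requires \cref{assump2} at $s=\sigma$, which is incompatible with the hypothesis $s=2\sigma$ (the assumption is two-sided, and the two decay rates contradict each other for large $\ell$); estimating directly via $\|\scaleKer\|_{\Hsig}$ instead gives $\|\varphi_{\rho_j}\|_{\Hsig}=\mathcal{O}(\rho_j^{-\sigma-d/2})$ and hence a defect of order $h_j^{\sigma/2-d/4}$, strictly worse than the $h_j^{\sigma/2}$ you need. You should know that both defects are inherited from, not introduced by, your reading of the paper: its proof invokes \cref{thm:QMC_error} at $\tau=0$ as if $s=\sigma$ while assuming $s=2\sigma$, and after deriving $\|\cE_jf\|_{\Hsig}\leq C_2C_\nu\|\cE_{j-1}f\|_{\Hsig}$ in \eqref{eq:multilevel_err3} it asserts the iterate $C_2C_\nu^j\|f\|_{\Hsig}$ in \eqref{eq:multilevel_err4}, whereas iteration actually gives $(C_2C_\nu)^j\|f\|_{\Hsig}$. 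So your diagnosis of where the difficulty lies is exactly right, but neither your argument nor the published one resolves it; a clean fix would require kernel and quadrature hypotheses strong enough to force the stability constant to equal $1$, or a restatement of the theorem with $\beta=C'C_\nu\delta^{\sigma/2}$ for an unspecified constant $C'\geq 1$.
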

 
 \begin{proof}
Since the cardinality of the point sets satisfies $|X_j|=N_j\leq Ch_j^{-d}$ and $\rho_j=\nu h_{j}^{1/2}$, we can apply \cref{thm:QMC_error} with $\tau=0$ and $\tau=\sigma$, respectively, to obtain the following two estimates:
\begin{equation}\label{eq:multilevel_err1}
\begin{aligned}
    \|\cE_{j-1}f-Q_{X_j,\rho_j}\cE_{j-1}f\|_{L_2}\leq &~C_1\big(\rho_j^{\sigma}+h_j^{\sigma}\rho_j^{-\sigma}\big)\|\cE_{j-1}f\|_{H^{\sigma}}\\
    \leq &~C_1(\nu^{\sigma}+\nu^{-\sigma})h_j^{\sigma/2}\|\cE_{j-1}f\|_{H^{\sigma}},
    \end{aligned}
\end{equation}
and
\begin{equation}\label{eq:multilevel_err2}
\begin{aligned}
    \|\cE_{j-1}f-Q_{X_j,\rho_j}\cE_{j-1}f\|_{H^{\sigma}}\leq &~ C_2(1+h_j\rho_j^{-2\sigma})\|\cE_{j-1}f\|_{\Hsig}\\
    \leq &~ C_2(1+\nu^{-2\sigma})\|\cE_{j-1}f\|_{\Hsig}.
    \end{aligned}
\end{equation}
By \eqref{eq:multilevel_err2}, we further derive
\begin{align}\label{eq:multilevel_err3}
    \|\cE_j f\|_{\Hsig}
    =\|\cE_{j-1}f-Q_{X_j,\rho_j}\cE_{j-1}f\|_{H^{\sigma}}
    \leq  C_2\big(1+\nu^{-2\sigma}\big)\|\cE_{j-1}f\|_{\Hsig}.
\end{align}
Let $C_\nu=1+\nu^{-2\sigma}$, Repeating the inequality \eqref{eq:multilevel_err3} for $j$ times yields
\begin{align}\label{eq:multilevel_err4}
    \|\cE_j f\|_{\Hsig}
    \leq C_2C_{\nu}^j\|\cE_0f\|_{\Hsig}=C_2C_{\nu}^j\|f\|_{\Hsig}.
\end{align}

Now using \eqref{eq:multilevel_err1}, we can estimate the error at level $n$ by
\begin{equation*}
\begin{aligned}
    \|f-\cM_n f\|_{L_2}=&~\|\cE_{n-1}f-Q_{X_n,\rho_n}\cE_{n-1}f\|_{L_2}\\
    \leq&~ C_1(\nu^{\sigma}+\nu^{-\sigma})\ h_n^{\sigma/2}\|\cE_{n-1}f\|_{H^{\sigma}}.
\end{aligned}
\end{equation*}
This together with \eqref{eq:multilevel_err4} and the assumption $h_n\leq\delta h_{n-1}\leq\delta^{n-1} h_1$ gives
\begin{align*}
    \|f-\cM_n f\|_{L_2}\leq&~ C_1C_2(\nu^{\sigma}+\nu^{-\sigma})C_{\nu}^{n-1}\ h_n^{\sigma/2}\|f\|_{H^{\sigma}}\\
\leq &~C_1C_2(\nu^{\sigma}+\nu^{-\sigma})h_1^{\sigma/2}\big(C_{\nu}\delta^{\sigma/2}\big)^{n-1}\|f\|_{H^{\sigma}}.
\end{align*}
Finally, defining $C=C_1C_2(\nu^{\sigma}+\nu^{-\sigma})$, we complete the proof.
\end{proof}

This result demonstrates that the multilevel quasi-interpolation method achieves convergence with respect to the level $n$, provided that the parameters $\nu$ and $\delta$ are selected such that $\beta<1$, and the fill distance $h_1$ is sufficiently small to ensure that the estimate in \cref{thm:QMC_error} holds at the first level.

\begin{algorithm}
\caption{Multilevel quasi-interpolation on the sphere}
\label{alg:multilevel_quasi_interpolation}
\begin{algorithmic}[1]
\Require~~ 
    \begin{itemize}
        \item Target function $f$ defined on the unit sphere $\mathbb{S}^d$
        \item  A sequence of quasi-uniform QMC point sets $\{X_j\}_{j=1}^n$ on $\mathbb{S}^d$ with fill distances $\{h_j\}_{j=1}^n$ satisfying $c_{\delta}\delta h_j\leq h_{j+1}\leq \delta h_j$ for some $\delta \in (0,1)$ and $0<c_{\delta}\leq 1$
        \item Scale parameters $\{\rho_j\}_{j=1}^n$ chosen as $\rho_j = \nu h_j^{1/2}$ for a fixed $\nu > 1$
    \end{itemize}
\Ensure~~
    \begin{itemize}
        \item Final approximation $\cM_n f$ to the target function $f$
    \end{itemize}
\State Initialize $\cM_0 f = 0$ and $\cE_0f = f$
\For{$j = 1, 2, \ldots, n$}
    \State Apply the quasi-interpolation operator $Q_{X_j, \rho_j}$ to $\cE_{j-1}f$ and denote $s_j = Q_{X_j, \rho_j}\cE_{j-1}f$
    \State Update the approximation $\cM_j f = \cM_{j-1}f + s_j$
    \State Update the error $\cE_j f = \cE_{j-1}f - s_j$
\EndFor\\
\Return the final approximation $\cM_n f$
\end{algorithmic}
\end{algorithm}

\section{Stochastic quasi-interpolation on the sphere}
\label{sec:Sto_qi_sph}

Let $\bX$ denote a random variable uniformly distributed on the sphere $\bbS^d$. We designate $\{\bX_j\}_{j=1}^N$ as a collection of $N$ independent and identically distributed (i.i.d.) copies of $\bX$, and let $\{f(\bX_j)\}_{j=1}^N$ represent the discrete evaluations of a continuous target function $f$ at these random sample points. We approximate the integral within the convolution operator \eqref{eq:ConvOper} via random sampling, thereby deriving a Monte Carlo quasi-interpolant (MCQI):
\begin{equation}\label{eq:MCQI}
	\cQ_{\scaleKer}^rf(x)=\frac{1}{N}\sum_{j=1}^{N}f(\bX_j)\scaleKer(x\cdot \bX_j),~~x\in \bbS^d.
\end{equation}

In what follows, we establish an $L_2$-probabilistic concentration inequality for MCQI, which exhibits an exponential decay rate. As a direct consequence, we obtain mean $L_2$ and $L_{\infty}$ convergence. Before proceeding, we recall a bounded differences (McDiarmid-type) inequality that will be used in the analysis.
\begin{lemma}[Bounded difference inequality {\cite{mcdiarmid-1989SurveyComb-method}}]
\label{lem:bounded-difference}
Let $\Omega$ be a measurable space and let 
$\cF:\Omega^{N}\to\mathbb R$ be a mapping for which there exist
non–negative constants $a_{1},\dots ,a_{N}$ such that
\begin{equation}\label{eq:bounded-diff}
  \bigl|
    \cF(x_{1},\dots ,x_{j},\dots ,x_{N})-
    \cF(x_{1},\dots ,x_{j}',\dots ,x_{N})
  \bigr|
  \le a_{j},
  \qquad
  1\le j\le N,
\end{equation}
for all points $x_{1},\dots ,x_{N},x_{j}'\in\Omega$.
Let $\bX_{1},\dots ,\bX_{N}$ be independent $\Omega$–valued random variables.
Then, for every $\epsilon>0$,
\begin{equation}\label{eq:mcdiarmid}
  \mathbb P\Bigl(
      \bigl|\cF(\bX_{1},\dots ,\bX_{N})
            -\mathbb E\cF(\bX_{1},\dots ,\bX_{N})\bigr|
      \ge \epsilon
  \Bigr)
  \;\le\;
  2\exp\Bigl(
      -\,\frac{2\epsilon^{2}}{\sum_{j=1}^{N}a_{j}^{2}}
  \Bigr).
\end{equation}
\end{lemma}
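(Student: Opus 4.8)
The plan is to prove this via the standard Doob martingale construction combined with the Azuma--Hoeffding bounded-increment argument. First I would introduce the Doob martingale associated with $\cF$: set $V_0 = \bbE[\cF(\bX_1,\ldots,\bX_N)]$ and, for $1 \le j \le N$,
\[
V_j = \bbE[\cF(\bX_1,\ldots,\bX_N) \mid \bX_1,\ldots,\bX_j],
\]
so that $V_N = \cF(\bX_1,\ldots,\bX_N)$ almost surely. The sequence $\{V_j\}_{j=0}^N$ is a martingale with respect to the natural filtration generated by the $\bX_k$, and its increments $D_j = V_j - V_{j-1}$ form a martingale difference sequence satisfying $\sum_{j=1}^N D_j = \cF - \bbE\cF$.

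The key estimate is a conditional range bound on each increment. For fixed values of $\bX_1,\ldots,\bX_{j-1}$, I would set
\[
L_j = \inf_{x}\, \bbE[\cF \mid \bX_1,\ldots,\bX_{j-1}, \bX_j = x], \quad U_j = \sup_{x}\,\bbE[\cF \mid \bX_1,\ldots,\bX_{j-1},\bX_j = x].
\]
The bounded differences hypothesis \eqref{eq:bounded-diff} forces $U_j - L_j \le a_j$, because replacing the $j$-th coordinate alters $\cF$ by at most $a_j$ pointwise, and independence of the $\bX_k$ lets the conditional expectation over the remaining coordinates $(\bX_{j+1},\ldots,\bX_N)$ inherit this bound. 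Since $V_{j-1}$ is the conditional average of $V_j$ over $\bX_j$, it lies between $L_j$ and $U_j$, so $D_j$ is confined, conditionally on the past, to an interval of length at most $a_j$.

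Next I would apply Hoeffding's lemma to the conditional moment generating function: a random variable of conditional mean zero lying in an interval of width $a_j$ satisfies $\bbE[e^{\lambda D_j} \mid \bX_1,\ldots,\bX_{j-1}] \le \exp(\lambda^2 a_j^2 / 8)$. Chaining these bounds via the tower property gives $\bbE[e^{\lambda(\cF - \bbE\cF)}] \le \exp\bigl(\lambda^2 \sum_{j=1}^N a_j^2 / 8\bigr)$. A Chernoff bound followed by optimizing over $\lambda > 0$ (the choice $\lambda = 4\epsilon / \sum_j a_j^2$) yields the one-sided tail $\exp\bigl(-2\epsilon^2 / \sum_{j=1}^N a_j^2\bigr)$; applying the same argument to $-\cF$ together with a union bound produces the factor $2$ and the two-sided statement \eqref{eq:mcdiarmid}.

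The main obstacle is the second step: rigorously establishing that the martingale increment $D_j$ has conditional range at most $a_j$. This requires using independence of the coordinates to factor the conditional expectation over $(\bX_{j+1},\ldots,\bX_N)$ and verifying that the pointwise coordinatewise-Lipschitz bound \eqref{eq:bounded-diff} survives this integration. Securing the constant $1/8$ in Hoeffding's lemma is what ultimately produces the sharp $2\epsilon^2$ in the exponent.
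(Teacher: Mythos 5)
The paper does not prove this lemma: it is quoted, with citation, from McDiarmid's survey and invoked as a black box in \cref{thm:ProbConv-MC}, \cref{thm:L2ProbConv_noise}, and \cref{thm:L2ProbConv_noise_rand}, so there is no in-paper proof to compare against. Your proposal is the standard Doob-martingale proof of McDiarmid's inequality and it is correct: independence of the coordinates yields the conditional range bound $U_j - L_j \le a_j$, Hoeffding's lemma gives the conditional moment-generating-function bound $\exp\bigl(\lambda^2 a_j^2/8\bigr)$, the tower property chains these estimates, and the Chernoff bound optimized at $\lambda = 4\epsilon\big/\sum_{j=1}^N a_j^2$ together with a union bound produces exactly the two-sided tail \eqref{eq:mcdiarmid}. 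This is essentially the argument in the cited reference itself, so your reconstruction matches the intended proof.
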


\begin{theorem}($L_2$-probabilistic concentration inequality under random sampling)\label{thm:ProbConv-MC}
	Suppose that the scaled zonal kernel $\scaleKer$ in \eqref{eq:zonal_kernel} satisfies \cref{assump:kernel}. Let $f\in \Hsig(\bbS^d)$ with $\sigma\geq m>d/2$. The MCQI $\cQ_{\scaleKer}^rf$ is defined in \eqref{eq:MCQI}. For any $\epsilon>0$, let $\rho$ and $N$ be chosen such that
    $$C\big(\rho^{m}\|f\|_{H^\sigma}+\rho^{-d/2}N^{-1/2}\|f\|_{L_\infty}\big)\leq \frac{\epsilon}{2},$$
    where $C_3$ is a constant given in \eqref{eq:ProbConv_4}.
    Then, there exists a constant $C>0$ independent of $\rho$ and $N$ such that
	$$\bbP\Big\{\|\cQ_{\scaleKer}^r f-f\|_{L_2}\geq\epsilon\Big\}\leq 2\exp\Big(\!-C\rho^d N\epsilon^2\|f\|_{L_{\infty}}^{-2}\Big).$$
\end{theorem}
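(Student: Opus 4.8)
The plan is to apply the bounded difference inequality (\cref{lem:bounded-difference}) to the scalar functional
$$\cF(\bX_1,\dots,\bX_N) := \|\cQ_{\scaleKer}^r f - f\|_{L_2},$$
regarded as a function of the independent samples $\bX_1,\dots,\bX_N$. Since McDiarmid's inequality controls deviations of $\cF$ about its own mean $\bbE\cF$, the argument naturally splits into two independent parts: first, showing $\bbE\cF \le \epsilon/2$ under the stated hypothesis on $\rho$ and $N$; second, computing uniform bounded-difference constants $a_j$. Once both are secured, I would use the inclusion $\{\cF \ge \epsilon\}\subseteq\{\cF-\bbE\cF\ge \epsilon/2\}$ and invoke \cref{lem:bounded-difference} at deviation level $\epsilon/2$.

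For the expectation bound I would split by the triangle inequality,
$$\cF \le \|\cQ_{\scaleKer}^r f - \mathcal{C}_{\scaleKer}f\|_{L_2} + \|\mathcal{C}_{\scaleKer}f - f\|_{L_2},$$
where the second (deterministic) term is at most $C\rho^{m}\|f\|_{\Hsig}$ by \cref{lem:L2ConvErr} with $\tau=0$ (using $m>d/2$). The random term is handled through unbiasedness: for every fixed $x$ one has $\bbE[\cQ_{\scaleKer}^r f(x)] = \mathcal{C}_{\scaleKer}f(x)$, so $\cQ_{\scaleKer}^r f(x)-\mathcal{C}_{\scaleKer}f(x)$ is an average of $N$ i.i.d.\ mean-zero terms. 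Interchanging expectation with the $L_2$-integration (Fubini) reduces the second moment to a single-sample variance,
$$\bbE\|\cQ_{\scaleKer}^r f - \mathcal{C}_{\scaleKer}f\|_{L_2}^2 = \frac{1}{N}\int_{\bbS^d}\mathrm{Var}\big(f(\bX_1)\scaleKer(x\cdot \bX_1)\big)\,\d\mu(x) \le \frac{\|f\|_{L_\infty}^2}{N}\|\scaleKer\|_{L_2}^2,$$
and the kernel estimate $\|\scaleKer\|_{L_2}\le C\rho^{-d/2}$ from \cref{lem:ScaleKernelNorm} yields a bound of order $\rho^{-d}N^{-1}\|f\|_{L_\infty}^2$. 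A Jensen step $\bbE(\cdot)\le(\bbE(\cdot)^2)^{1/2}$ converts this into $C\rho^{-d/2}N^{-1/2}\|f\|_{L_\infty}$. Adding the two contributions reproduces exactly the left-hand side of the hypothesis, giving $\bbE\cF\le \epsilon/2$.

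For the bounded-difference constants, I would observe that replacing a single sample $\bX_j$ by $\bX_j'$ changes only the $j$-th summand of $\cQ_{\scaleKer}^r f$; by the Lipschitz property of the norm, the induced change in $\cF$ is at most the $L_2$ norm of the difference of those two summands, each bounded by $N^{-1}\|f\|_{L_\infty}\|\scaleKer\|_{L_2}$. Thus $a_j \le 2N^{-1}\|f\|_{L_\infty}\|\scaleKer\|_{L_2}\le CN^{-1}\rho^{-d/2}\|f\|_{L_\infty}$ uniformly in $j$, whence $\sum_{j=1}^N a_j^2 \le CN^{-1}\rho^{-d}\|f\|_{L_\infty}^2$. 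Substituting into \cref{lem:bounded-difference} gives the stated exponential rate $\exp(-C\rho^d N\epsilon^2\|f\|_{L_\infty}^{-2})$.

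I expect the main obstacle to be the expectation estimate for the random term: one must correctly exploit unbiasedness together with Fubini to collapse the $L_2$ norm of the fluctuation into a single-sample variance, and then control that variance through the $L_2$ norm of the scaled kernel. The delicate, and ultimately decisive, point is that the \emph{same} scaling $\rho^{-d/2}$ governs both $\bbE\cF$ and the constants $a_j$; this is precisely what makes the hypothesis on $(\rho,N)$ and the concentration rate mutually consistent, so that the two halves of the argument dovetail. Everything else is routine once these scalings are pinned down.
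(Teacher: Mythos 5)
Your proposal is correct and takes essentially the same approach as the paper: the same splitting through the convolution operator $\mathcal{C}_{\scaleKer}f$, the same expectation estimates (\cref{lem:L2ConvErr} for the bias, Fubini--Jensen with \cref{lem:ScaleKernelNorm} for the variance term), and McDiarmid's inequality (\cref{lem:bounded-difference}) with the identical bounded-difference constants $a_j\le CN^{-1}\rho^{-d/2}\|f\|_{L_\infty}$. The only cosmetic difference is that you concentrate $\|\cQ_{\scaleKer}^r f-f\|_{L_2}$ about its own mean, while the paper concentrates $\bZ=\|\cQ_{\scaleKer}^r f-\mathcal{C}_{\scaleKer}f\|_{L_2}$ and adds the deterministic convolution error afterwards; since the two functionals differ only by a fixed (nonrandom) function inside the norm, their bounded-difference constants coincide and the two arguments are interchangeable.
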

\begin{proof}
	We define the random variable $\bZ:=\|\cQ_{\scaleKer}^r f-\mathcal{C}_{\scaleKer}f\|_{L_2}$, let $\bbE\bZ$ denote its expectation, and apply the $L_2$ triangle inequality to obtain 
\begin{equation}\label{eq:ProbConv_1}
		\begin{aligned}
			\|\cQ_{\scaleKer}^r f-f\|_{L_2}\leq &~\|\cQ_{\scaleKer}^r f-\mathcal{C}_{\scaleKer}f\|_{L_2}+\|\mathcal{C}_{\scaleKer}f-f\|_{L_2}\\
			\leq &~\big|\bZ-\bbE \bZ\big|+\bbE \bZ+\|\mathcal{C}_{\scaleKer}f-f\|_{L_2}.
		\end{aligned}
	\end{equation}
	The third term $\|\mathcal{C}_{\scaleKer}f-f\|_{L_2}$ on the right-hand side of \eqref{eq:ProbConv_1} corresponds to the convolution approximation error, which can be bounded deterministically by \cref{lem:L2ConvErr}:
	\begin{equation}\label{eq:ProbConv_2}
		\|\mathcal{C}_{\scaleKer}f-f\|_{L_2}\leq C\rho^{m}\|f\|_{H^\sigma}.
	\end{equation}
    
	To estimate the second term in \eqref{eq:ProbConv_1}, we first employ Fubini's theorem to write
	\begin{equation*}
		\begin{aligned}
			\bbE \bZ^2=\int_{\bbS^d}\Big[\bbE\big|\cQ_{\scaleKer}^r f(x)-\mathcal{C}_{\scaleKer}f(x)\big|^2\Big]\dmu.
		\end{aligned} 
	\end{equation*}
	We then introduce an auxiliary random variable $$\bY_{x} := f(\bm{X})\scaleKer(x \cdot \bm{X}),~~x\in\bbS^d.$$
	The second moment of $\bY_x$ is given by
	\begin{equation*}\label{eq:L2toL2LinfError}
		\begin{aligned}
			\bbE[\bY_{x}^2] = &~\int_{\bbS^d} f^2(y)\scaleKer^2(x \cdot y) \d \mu(y)\\
			\leq &~ \|f\|_{L_\infty}^2 \int_{\bbS^d}\scaleKer^2(x \cdot y) \d \mu(y)
			\leq   C\rho^{-d}\|f\|_{L_\infty}^2.
		\end{aligned}
	\end{equation*}
	where the last inequality follows from the $L_2$ bound of the scaled kernel in \cref{lem:ScaleKernelNorm}.
	With this result, we can estimate the mean squared error as
	\begin{equation}\label{eq:MSEerror1}
		\bbE\Big[ \big|   \cQ_{\scaleKer}^r f(x)-\mathcal{C}_{\scaleKer}f(x) \big|^2 \Big] \leq N^{-1}\bbE[\bY_{x}^2] \leq C\rho^{-d}N^{-1}\|f\|_{L_\infty}^2.
	\end{equation}
	Consequently, we can apply Jensen's inequality
	$\bbE \bZ\leq \sqrt{\bbE \bZ^2}$ to obtain
	\begin{equation}\label{eq:ProbConv_3}
		\bbE\bZ=\bbE\Big[\|\cQ_{\scaleKer}^r f-\mathcal{C}_{\scaleKer}f\|_{L_2}\Big]\leq C\rho^{-d/2}N^{-1/2}\|f\|_{L_{\infty}}.
	\end{equation}
	
	By combining the estimates from \eqref{eq:ProbConv_1}, \eqref{eq:ProbConv_2} and \eqref{eq:ProbConv_3}, we arrive at
    \begin{equation}\label{eq:ProbConv_4}
        \|\cQ_{\scaleKer}^r f-f\|_{L_2}\leq\big|\bZ-\bbE \bZ\big|+ C_3\big(\rho^{m}\|f\|_{H^\sigma}+\rho^{-d/2}N^{-1/2}\|f\|_{L_\infty}\big).
    \end{equation}
	Now, under the assumption
	$$C_3\big(\rho^{m}\|f\|_{H^\sigma}+\rho^{-d/2}N^{-1/2}\|f\|_{L_\infty}\big)\leq \frac{\epsilon}{2},$$
	it follows that for $\|\cQ_{\scaleKer}^r f-f\|_{L_2}\geq\epsilon$ to hold, it is necessary that 
	$$\big|\bZ-\bbE \bZ\big|\geq\frac{\epsilon}{2}.$$
	Hence, we obtain the following probability inequality:
	\begin{equation}\label{eq:Prob_Ineq_1}
		\bbP\Big\{\|\cQ_{\scaleKer}^r f-f\|_{L_2}\geq\epsilon\Big\}\leq \bbP\Big\{\big|\bZ-\bbE\bZ\big|\geq\frac{\epsilon}{2}\Big\}.
	\end{equation}
	
	Finally, to estimate the probability on the right-hand side of \eqref{eq:Prob_Ineq_1}, we define an auxiliary function
	$$\cF(x_1,\ldots,x_N):=\big\|\frac{1}{N}\sum_{j=1}^N f(x_j)\scaleKer(x\cdot x_j)-\mathcal{C}_{\scaleKer}f(x)\big\|_{L_2}.$$
	We can use the triangle inequality to estimate the difference
	\begin{equation*}
		\begin{aligned}
			&|\cF(x_1,\ldots,x_j,\ldots,x_N)-\cF(x_1,\ldots,x_j',\ldots,x_N)|\\
			\leq&~\frac{1}{N}\|f(x_j)\scaleKer(x\cdot x_j)-f(x_j')\scaleKer(x\cdot x_j')\|_{L_2}\\
			\leq &~\frac{2}{N}\|f\|_{L_{\infty}}\|\scaleKer\|_{L_2}\leq C\rho^{-d/2}N^{-1}\|f\|_{L_{\infty}}.
		\end{aligned}
	\end{equation*}
	Since $\bZ=\cF(\bX_1,\ldots,\bX_N)$, we invoke the bounded difference inequality in \cref{lem:bounded-difference} to deduce
	\begin{equation*}
		\begin{aligned}
			\bbP\Big\{\Big|\bZ-\bbE\bZ\Big|\geq\frac{\epsilon}{2}\Big\}\leq &~2\exp\Big(\!-C\frac{2\epsilon^2}{4\|f\|_{L_{\infty}}^{2}\sum_{j=1}^N\rho^{-d}N^{-2}}\Big)\\
			= &~2\exp\Big(\!-C\rho^{d}N\epsilon^2\|f\|_{L_{\infty}}^{-2}\Big),
		\end{aligned}
	\end{equation*}
	which completes the proof.
\end{proof}

From the proof of above theorem, we can derive the mean $L_2$-convergence and the maximal mean squared error (MMSE) convergence under random sampling.

\begin{corollary}(Mean $L_2$-convergence)\label{corol:L2Err-MC}
    Suppose that the scaled zonal kernel $\scaleKer$ in \eqref{eq:zonal_kernel} satisfies \cref{assump:kernel}. Let $f\in \Hsig(\bbS^d)$ with $\sigma\geq m>d/2$. Let MCQI $\cQ_{\scaleKer}^r f$ be defined in \eqref{eq:MCQI} with the parameter chosen as $\rho=\mathcal{O}(N^{-\frac{1}{2m+d}})$.  Then, there exists a constant $C>0$ independent of $N$ such that
    $$\bbE\Big[\|\cQ_{\scaleKer}^r f-f\|_{L_2}^2\Big]\leq CN^{-\frac{2m}{2m+d}}\|f\|_{\Hsig}.$$
\end{corollary}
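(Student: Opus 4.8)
The plan is to bound the mean squared $L_2$ error by splitting it into a stochastic (variance) part and a deterministic (bias) part, and then to optimize the choice of $\rho$ against $N$. Concretely, I would start from the triangle inequality in $L_2$ exactly as in the proof of \cref{thm:ProbConv-MC}, writing
\begin{equation*}
\|\cQ_{\scaleKer}^r f - f\|_{L_2} \leq \|\cQ_{\scaleKer}^r f - \mathcal{C}_{\scaleKer}f\|_{L_2} + \|\mathcal{C}_{\scaleKer}f - f\|_{L_2},
\end{equation*}
and then square and take expectations. Using $(a+b)^2 \leq 2a^2 + 2b^2$, the mean squared error is controlled by $2\,\bbE[\bZ^2] + 2\,\|\mathcal{C}_{\scaleKer}f - f\|_{L_2}^2$, where $\bZ = \|\cQ_{\scaleKer}^r f - \mathcal{C}_{\scaleKer}f\|_{L_2}$ is the random variable already analyzed above.

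The two ingredients I need are already established in the preceding proof. For the stochastic term, the mean squared error estimate \eqref{eq:MSEerror1}, integrated over $\bbS^d$ via Fubini (as in the derivation of \eqref{eq:ProbConv_3}), gives $\bbE[\bZ^2] \leq C\rho^{-d}N^{-1}\|f\|_{L_\infty}^2$. For the deterministic bias term, \cref{lem:L2ConvErr} (invoked with $\tau=0$) yields $\|\mathcal{C}_{\scaleKer}f - f\|_{L_2} \leq C\rho^{m}\|f\|_{\Hsig}$, so its square contributes $C\rho^{2m}\|f\|_{\Hsig}^2$. Combining these,
\begin{equation*}
\bbE\Big[\|\cQ_{\scaleKer}^r f - f\|_{L_2}^2\Big] \leq C\big(\rho^{2m}\|f\|_{\Hsig}^2 + \rho^{-d}N^{-1}\|f\|_{L_\infty}^2\big).
\end{equation*}

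The final step is to balance the two terms by equating the orders $\rho^{2m}$ and $\rho^{-d}N^{-1}$, i.e. $\rho^{2m+d} \sim N^{-1}$, which gives the stated choice $\rho = \mathcal{O}(N^{-1/(2m+d)})$. Substituting this back, both terms are of order $N^{-2m/(2m+d)}$, producing the claimed rate. Since $\|f\|_{L_\infty} \leq C\|f\|_{\Hsig}$ by the Sobolev embedding (valid because $\sigma > d/2$), the $L_\infty$ norm can be absorbed into the Sobolev norm, and the result follows.

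The main obstacle here is essentially bookkeeping rather than any deep difficulty: the heavy lifting (the Fubini argument bounding $\bbE[\bZ^2]$ and the convolution bias bound) has already been done in the proof of \cref{thm:ProbConv-MC}, so I mostly need to assemble these pieces and carry out the optimization cleanly. The one subtle point worth care is tracking norms consistently—the stochastic term naturally appears with $\|f\|_{L_\infty}$ while the bias term carries $\|f\|_{\Hsig}$, and reconciling these into a single $\|f\|_{\Hsig}$ on the right-hand side requires the Sobolev embedding; I would note that the stated bound has $\|f\|_{\Hsig}$ to the first power (rather than squared), which suggests the authors are being slightly informal about the norm dependence, so I would keep an eye on whether the right-hand side should read $\|f\|_{\Hsig}^2$ for dimensional consistency.
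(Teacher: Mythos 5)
Your proof is correct and follows essentially the same route as the paper: the paper's own (very terse) proof likewise combines the convolution bias bound \eqref{eq:ProbConv_2} with the stochastic bound \eqref{eq:ProbConv_3} and balances $\rho^m \sim \rho^{-d/2}N^{-1/2}$, which is exactly the optimization you carry out on the squared quantities. Your side remark is also a correct catch: for dimensional consistency the right-hand side of the corollary should read $\|f\|_{\Hsig}^2$ (after absorbing $\|f\|_{L_\infty}$ via the Sobolev embedding $\sigma>d/2$), so the first-power $\|f\|_{\Hsig}$ in the paper's statement appears to be a typo.
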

\begin{proof}
    By combining \eqref{eq:ProbConv_2} and \eqref{eq:ProbConv_3}, and then selecting $\rho$ such that $\rho^m=\mathcal{O}(\rho^{-d/2}N^{-1/2})$, we get the desired result.
\end{proof}

\begin{corollary}(MMSE convergence)\label{corol:UniformErr-MC}
	Suppose that the scaled zonal kernel $\scaleKer$ in \eqref{eq:zonal_kernel} satisfies \cref{assump:kernel}. Let $f\in \Hsig(\bbS^d)$ with $\sigma\geq m>d/2$. Let MCQI $\cQ_{\scaleKer}^rf$ be defined in \eqref{eq:MCQI}. Then, there exists a constant $C>0$ independent of $\rho$ and $N$ such that
	$$\sup_{x\in\bbS^d}\bbE \big[\cQ_{\scaleKer}^r f(x)-f(x)\big]^2\leq C(\rho^{2m-d}+\rho^{-d}N^{-1})\|f\|_{\Hsig}^2.$$
    In particular, setting $\rho=\mathcal{O}(N^{-\frac{1}{2m}})$, we have
    $$\sup_{x\in\bbS^d}\bbE \big[\cQ_{\scaleKer}^r f(x)-f(x)\big]^2\leq C N^{-\frac{2m-d}{2m}}\|f\|_{\Hsig}^2.$$
\end{corollary}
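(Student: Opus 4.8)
The plan is to exploit a bias--variance decomposition of the pointwise mean squared error, leveraging the fact that the Monte Carlo quasi-interpolant is an \emph{unbiased} estimator of the convolution approximation. First I would observe that, since $\bX_1,\dots,\bX_N$ are i.i.d.\ uniform on $\bbS^d$, each summand $f(\bX_j)\scaleKer(x\cdot\bX_j)$ has expectation $\int_{\bbS^d} f(y)\scaleKer(x\cdot y)\,\d\mu(y)=\mathcal{C}_{\scaleKer}f(x)$, so that $\bbE\big[\cQ_{\scaleKer}^r f(x)\big]=\mathcal{C}_{\scaleKer}f(x)$. Writing $\cQ_{\scaleKer}^r f(x)-f(x)=\big(\cQ_{\scaleKer}^r f(x)-\mathcal{C}_{\scaleKer}f(x)\big)+\big(\mathcal{C}_{\scaleKer}f(x)-f(x)\big)$ and squaring, the cross term vanishes in expectation because the first bracket is mean-zero while the second is deterministic. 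This yields the exact splitting
$$
\bbE\big[\cQ_{\scaleKer}^r f(x)-f(x)\big]^2
=\underbrace{\bbE\big[\cQ_{\scaleKer}^r f(x)-\mathcal{C}_{\scaleKer}f(x)\big]^2}_{\text{variance}}
+\underbrace{\big[\mathcal{C}_{\scaleKer}f(x)-f(x)\big]^2}_{\text{squared bias}}.
$$

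Next I would bound the two pieces separately using results already in hand. The variance term is precisely the quantity estimated in \eqref{eq:MSEerror1}, where independence contributes the factor $N^{-1}$ and \cref{lem:ScaleKernelNorm} supplies the $L_2$ bound on $\scaleKer$; this gives $\bbE\big[\cQ_{\scaleKer}^r f(x)-\mathcal{C}_{\scaleKer}f(x)\big]^2\le C\rho^{-d}N^{-1}\fInf^2$ uniformly in $x$. For the squared bias, I would bound it by the $L_\infty$ convolution error and invoke \cref{thm:ConvErr}, which yields $\big[\mathcal{C}_{\scaleKer}f(x)-f(x)\big]^2\le\|f-\mathcal{C}_{\scaleKer}f\|_{L_\infty}^2\le C\rho^{2m-d}\fHsig^2$. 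To unify the norms, I would apply the Sobolev embedding $H^\sigma(\bbS^d)\hookrightarrow L_\infty(\bbS^d)$, valid because $\sigma>d/2$, replacing $\fInf$ by $C\fHsig$ in the variance bound. Taking the supremum over $x\in\bbS^d$ and adding the two contributions then produces the stated estimate
$$
\sup_{x\in\bbS^d}\bbE\big[\cQ_{\scaleKer}^r f(x)-f(x)\big]^2
\le C\big(\rho^{2m-d}+\rho^{-d}N^{-1}\big)\fHsig^2.
$$

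Finally, for the explicit rate I would balance the two terms: setting $\rho^{2m-d}=\rho^{-d}N^{-1}$ gives $\rho^{2m}=N^{-1}$, i.e.\ $\rho=\mathcal{O}(N^{-1/(2m)})$, at which both contributions equal $N^{-(2m-d)/(2m)}$; the hypothesis $m>d/2$ guarantees the exponent $(2m-d)/(2m)$ is positive. I do not anticipate a serious analytic obstacle, since the heavy lifting was already carried out in \cref{thm:ConvErr} (the deterministic $L_\infty$ bias bound) and in the second-moment computation \eqref{eq:MSEerror1} (the variance bound). The only points requiring genuine care are verifying unbiasedness, so that the decomposition is exactly variance plus squared bias with no surviving cross term, and invoking the Sobolev embedding so that the final estimate is expressed through the single norm $\fHsig$ rather than a mixture of $\fInf$ and $\fHsig$.
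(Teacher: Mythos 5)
Your proposal is correct and follows essentially the same route as the paper's own proof: the same bias--variance decomposition at a fixed point (the paper states it as an equality, implicitly using the unbiasedness you spell out), the same variance bound from \eqref{eq:MSEerror1}, the same bias bound from \cref{thm:ConvErr}, and the same use of the embedding $\|f\|_{L_\infty}\leq C\|f\|_{\Hsig}$ for $\sigma>d/2$ before taking the supremum and balancing $\rho^{2m-d}=\rho^{-d}N^{-1}$. No gaps; your explicit verification that the cross term vanishes is a welcome clarification of a step the paper leaves implicit.
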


\begin{proof}
	For a fixed point $x\in\bbS^d$, we can decompose the mean squared error as
	\begin{equation*}
		\begin{aligned}
			\bbE \big(\cQ_{\scaleKer}^r f(x)-f(x)\big)^2
			=\bbE\big[\cQ_{\scaleKer}^r f(x)-\mathcal{C}_{\scaleKer}f(x)\big]^2+\big(\mathcal{C}_{\scaleKer}f(x)-f(x)\big)^2.
		\end{aligned}
	\end{equation*}
	The second term on the right-hand side is bounded by \cref{thm:ConvErr},
	$$\big(\mathcal{C}_{\scaleKer}f(x)-f(x)\big)^2\leq C\rho^{2m-d}\|f\|_{\Hsig}^2.$$
	By using \eqref{eq:MSEerror1}, we can estimate the first term
\begin{equation*}
	\bbE\Big[ \big|   \cQ_{\scaleKer}^r f(x) -\mathcal{C}_{\scaleKer}f(x)\big|^2 \Big] \leq C\rho^{-d}N^{-1}\|f\|_{L^\infty}^2.
\end{equation*}
Stringing the two inequalities together, we arrive at
\begin{align*}
	\bbE\Big[ \big|\cQ_{\scaleKer}^r f(x) -f(x)\big|^2 \Big] 
    \leq C\big( \rho^{2m-d} + \rho^{-d}N^{-1} \big)\|f\|_{\Hsig}^2,
\end{align*}
where we have used $\|f\|_{L_{\infty}}\leq C\|f\|_{\Hsig}$ for $\sigma>d/2$. 
This estimate is uniform in $x\in\bbS^d$, which completes the proof.
\end{proof}

\begin{remark}
    The scaling parameter $\rho$ in the kernel governs a bias-variance trade-off in the approximation: decreasing $\rho$ reduces the deterministic convolution error ($\mathcal{O}(\rho^{2m-d})$), but increases the stochastic quasi-interpolation error ($\mathcal{O}(\rho^{-d}N^{-1})$). We can minimize the error with respect to $\rho$ that yields the optimal scaling $\rho=\mathcal{O}(N^{-\frac{1}{2m}})$. This choice then gives the uniform convergence rate $N^{-\frac{2m-d}{2m}}$.
\end{remark}

\section{Quasi-interpolation for noisy data on the sphere}
\label{sec:qi_noisydata}
In practical scenarios, data sampling inherently incorporates varying levels of noise. Consequently, developing methods capable of handling noise becomes essential.  Hesse et al. \cite{hesse-2017NM-radial} introduced a regularized interpolation-based approximation framework that balances data fidelity and kernel smoothness. Recently, the distributed learning literature has provided diverse strategies for addressing noisy data  \cite{lin_SINUM2021_distributed,montufar_2022FoCom_distributed}.
In contrast to kernel-based interpolation methods, the proposed SKQI approach does not strictly enforce interpolation conditions. This flexibility renders it particularly well-suited for approximating noisy functions, as it circumvents the need to solve regularization systems. 

Given a set of noisy sampling data on a quasi-Monte Carlo point set $X^q$, $$y_j=f(x_j)+\vare_j,~x_j\in X^q,~j=1,\ldots,N,$$
where $\{\vare_j\}_{j=1}^{N}$ is a set of independent random noises that satisfy $\bbE[\vare_j]=0$ and $|\vare_ j|\leq M_{\vare}$, we apply the QMC quasi-interpolation \eqref{eq:QMCQI} to obtain
\begin{equation}\label{eq:NoisyQI}
\cQ_{\scaleKer}^{q,\vare} f=\frac{1}{N}\sum_{j=1}^ {N}\big(f(x_j)+\vare_j\big)\scaleKer(x\cdot x_j),~~x\in \bbS^d.
\end{equation}

The following theorem shows an $L_2$-probabilistic concentration inequality of the QMC quasi-interpolation $\QiE f$ for approximating noisy data under deterministic sampling. 

\begin{theorem}($L_2$-probabilistic concentration inequality for noisy data under deterministic sampling)
\label{thm:L2ProbConv_noise}
    Let $\{\vare_j\}_{j=1}^{N}$ be a set of independent zero-mean random noises satisfying $|\vare_j|\leq M_{\vare}$ with $M_{\vare}>0$. Suppose $f\in H^{\sigma}(\bbS^d)$ with $\sigma\geq m>d/2$. Let the quasi-interpolation $\QiE f$ be defined in \eqref{eq:NoisyQI} with the scaled zonal kernel $\scaleKer$ satisfying \cref{assump:kernel} and \cref{assump2} with $s\geq \sigma$. 
    For any $\epsilon>0$, let $\rho$ and $N$ be chosen such that
    \begin{equation}\label{eq:eps_assump2}
        C_4\big({\rho}^{m}+\rho^{-d/2}N^{-1/2}\big)\big(\|f\|_{H^{\sigma}}+M_{\vare}\big)\leq \frac{\epsilon}{2},
    \end{equation}
    where $C_4$ is a constant given in \eqref{eq:ProbConv_noise_4}.
    Then, there exists a constant $C>0$ independent of $\rho$ and $N$ such that
	$$\bbP\Big\{\|\QiE f-f\|_{L_2}\geq\epsilon\Big\}\leq 2\exp\Big(\!-C\rho^d N\epsilon^2M_{\vare}^{-2}\Big).$$
\end{theorem}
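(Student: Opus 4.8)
The plan is to mirror the random-sampling concentration result \cref{thm:ProbConv-MC}, the essential difference being that here the only randomness resides in the noise vector $(\vare_1,\dots,\vare_N)$, since the nodes $x_j\in X^q$ are deterministic. First I would isolate the noise by writing
\begin{equation*}
\QiE f - f = \big(\cQ_{\scaleKer}^q f - f\big) + \frac{1}{N}\sum_{j=1}^N \vare_j\,\scaleKer(x\cdot x_j),
\end{equation*}
and introduce the random variable $\bZ:=\|\QiE f-\mathcal{C}_{\scaleKer}f\|_{L_2}$ centered at the convolution. The triangle inequality gives $\|\QiE f-f\|_{L_2}\leq|\bZ-\bbE\bZ|+\bbE\bZ+\|\mathcal{C}_{\scaleKer}f-f\|_{L_2}$, so the task splits into controlling the deterministic bias ($\bbE\bZ$ together with the convolution error) and bounding the fluctuation $|\bZ-\bbE\bZ|$ by a bounded-difference argument.

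Next I would dispose of the deterministic pieces. The convolution error $\|\mathcal{C}_{\scaleKer}f-f\|_{L_2}$ is $\cO(\rho^m\|f\|_{\Hsig})$ by \cref{lem:L2ConvErr}, while the deterministic QMC quadrature error $\|\cQ_{\scaleKer}^q f-\mathcal{C}_{\scaleKer}f\|_{L_2}$ (a summand of $\bbE\bZ$ because the nodes are fixed) is handled by \cref{thm:QMC_error} with $\tau=0$. The genuinely stochastic part of $\bbE\bZ$ is the noise fluctuation: by Fubini and Jensen $\bbE\bZ\leq(\bbE\bZ^2)^{1/2}$, and since the $\vare_j$ are independent and mean-zero, all cross terms vanish and only the diagonal survives, leaving
\begin{equation*}
\bbE\Big\|\frac{1}{N}\sum_{j=1}^N \vare_j\,\scaleKer(x\cdot x_j)\Big\|_{L_2}^2 \leq \frac{M_{\vare}^2}{N}\,\|\scaleKer\|_{L_2}^2 \leq C\rho^{-d}N^{-1}M_{\vare}^2,
\end{equation*}
where the last step is the $L_2$ kernel bound from \cref{lem:ScaleKernelNorm}. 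Thus the noise contributes $\cO(\rho^{-d/2}N^{-1/2}M_{\vare})$ to $\bbE\bZ$, and hypothesis \eqref{eq:eps_assump2} forces the total bias to be at most $\epsilon/2$, so that $\{\|\QiE f-f\|_{L_2}\geq\epsilon\}\subseteq\{|\bZ-\bbE\bZ|\geq\epsilon/2\}$.

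Finally I would apply \cref{lem:bounded-difference} to $\cF(\vare_1,\dots,\vare_N):=\big\|\frac{1}{N}\sum_{j}(f(x_j)+\vare_j)\scaleKer(x\cdot x_j)-\mathcal{C}_{\scaleKer}f\big\|_{L_2}$, viewing the noises as the independent coordinates. Replacing a single $\vare_j$ by $\vare_j'$ changes $\cF$ by at most $\frac{1}{N}|\vare_j-\vare_j'|\,\|\scaleKer\|_{L_2}\leq\frac{2M_{\vare}}{N}\|\scaleKer\|_{L_2}\leq C\rho^{-d/2}N^{-1}M_{\vare}$, so the increment constants obey $a_j\leq C\rho^{-d/2}N^{-1}M_{\vare}$ and $\sum_{j=1}^N a_j^2\leq C\rho^{-d}N^{-1}M_{\vare}^2$. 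Substituting into the McDiarmid estimate \eqref{eq:mcdiarmid} with $\epsilon/2$ in place of $\epsilon$ produces exactly the advertised rate $\exp(-C\rho^d N\epsilon^2 M_{\vare}^{-2})$.

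The chief difficulty I anticipate is bookkeeping rather than conceptual. One must ensure that the single factor $\rho^{-d/2}N^{-1/2}M_{\vare}$ appears consistently in the variance bound for $\bbE\bZ$, in the bias budget \eqref{eq:eps_assump2}, and in the McDiarmid exponent; this consistency hinges on the fact that both the mean-square and the bounded-difference estimates are governed by the same quantity $\|\scaleKer\|_{L_2}\leq C\rho^{-d/2}$ from \cref{lem:ScaleKernelNorm}. A secondary point requiring care is absorbing the deterministic QMC quasi-interpolation error into the $\rho^m$ and $\rho^{-d/2}N^{-1/2}$ terms of \eqref{eq:eps_assump2}, which is where the decay hypothesis $s\geq\sigma$ of \cref{assump2} together with $\sigma\geq m$ is used.
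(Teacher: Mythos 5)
Your proposal is correct and takes essentially the same approach as the paper: both split the error via the triangle inequality into a deterministic bias (convolution error by \cref{lem:L2ConvErr}, QMC quadrature error by \cref{thm:QMC_error} with $\tau=0$) plus a noise fluctuation, bound the mean of the fluctuation by Jensen's inequality and the $L_2$ kernel bound of \cref{lem:ScaleKernelNorm}, and then apply the bounded-difference inequality of \cref{lem:bounded-difference} in the noise coordinates with increments $a_j\leq C\rho^{-d/2}N^{-1}M_{\vare}$. The only cosmetic difference is your centering of $\bZ$ at $\mathcal{C}_{\scaleKer}f$ rather than at the clean quasi-interpolant $\cQ_{\scaleKer}^q f$ as the paper does (which makes the paper's random variable purely the noise term $\|\frac{1}{N}\sum_j\vare_j\scaleKer(\cdot\, \cdot\, x_j)\|_{L_2}$); this changes nothing in the bounded-difference constants or the final exponent.
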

\begin{proof}
We define the random variable $\bZ_{q}:=\|\QiE f-\cQ_{\scaleKer}^q f\|_2$ and apply the triangle inequality to write
\begin{equation}\label{eq:ProbConv_noise_1}
    \|\cQ_{\scaleKer}^{q,\vare} f-f\|_{L_2}\leq  |\bZ_{q}-\bbE \bZ_{q}| +\bbE\bZ_{q}+\|\cQ_{\scaleKer}^q f-f\|_{L_2}.
\end{equation}
The third term on the right-hand side can be bounded using \cref{thm:QMC_error} with $\tau=0$, 
\begin{equation*}\label{eq:ProbConv_noise_2}
    \|\cQ_{\scaleKer}^q f-f\|_{L_2}\leq C\big({\rho}^{m}+\rho^{-\sigma}N^{-\sigma/d}\big)\|f\|_{H^{\sigma}}.
\end{equation*}

To estimate the second term, we apply Jensen's inequality to obtain
\begin{equation*}
\begin{aligned}
    \bbE\bZ_{q}\leq \sqrt{ \bbE\bZ_{q}^2}
    = \Big(\int_{\bbS^d}\bbE\Big[\big(\QiE f(x)-\cQ_{\scaleKer}^q f(x)\big)^2\Big]\d\mu(x)\Big)^{1/2}.
\end{aligned}
\end{equation*}
Furthermore, by utilizing the independence and boundedness of $\vare_j$, we can write
\begin{equation*}
    \begin{aligned}
        \bbE\Big[\big(\QiE f(x)-\cQ_{\scaleKer}^q f(x)\big)^2\Big]
        = \bbE\Big[\Big(\frac{1}{N}\sum_{j=1}^ {N}\vare_j\scaleKer(x\cdot x_k)\Big)^2\Big]
        \leq  \frac{M_{\vare}^2}{N^2}\sum_{j=1}^ {N}|\scaleKer(x\cdot x_j)|^2,
    \end{aligned}
\end{equation*}
From which, we derive
\begin{equation*}
\begin{aligned}
    \bbE\bZ_{q}\leq  \Big(\frac{M_{\vare}^2}{N^2}\sum_{j=1}^ {N}\int_{\bbS^d}|\scaleKer(x\cdot x_j)|^2\d \mu(x)\Big)^{1/2}\leq C\rho^{-d/2}N^{-1/2}M_{\vare},
\end{aligned}
\end{equation*}
where the last inequality follows directly from \cref{lem:ScaleKernelNorm}.

Combining the results above, the inequality \eqref{eq:ProbConv_noise_1} can be further bounded by 
\begin{equation}\label{eq:ProbConv_noise_3}
    \|\QiE f-f\|_{L_2}\leq  |\bZ_{q}-\bbE \bZ_{q}|+C\big({\rho}^{m}+\rho^{-\sigma}N^{-\sigma/d}\big)\|f\|_{H^{\sigma}}+C\rho^{-d/2}N^{-1/2}M_{\vare}.
\end{equation}
Observing that $\rho^{-\sigma}N^{-\sigma/d}<\rho^{-d/2}N^{-1/2}$ for $\sigma>d/2$ and $N$ sufficiently large, \eqref{eq:ProbConv_noise_3} simplifies to
\begin{equation}\label{eq:ProbConv_noise_4}
    \|\QiE f-f\|_{L_2}\leq |\bZ_{q}-\bbE \bZ_{q}|+ C_4\big({\rho}^{m}+\rho^{-d/2}N^{-1/2}\big)\big(\|f\|_{H^{\sigma}}+M_{\vare}\big).
\end{equation}

For any $\epsilon>0$, using the assumption \eqref{eq:eps_assump2}, the following inequality holds true:
\begin{equation*}
		\bbP\Big\{\|\QiE f-f\|_{L_2}\geq\epsilon\Big\}\leq \bbP\Big\{|\bZ_q-\bbE\bZ_q|\geq\frac{\epsilon}{2}\Big\}.
\end{equation*}
Next, we introduce an auxiliary function:
\begin{align*}
    \cF_{q}(\varepsilon_1,\ldots,\varepsilon_N):=&~\big\|\frac{1}{N}\sum_{j=1}^N (f(x_j)+\varepsilon_j)\scaleKer(x\cdot x_j)-\frac{1}{N}\sum_{j=1}^N f(x_j)\scaleKer(x\cdot x_j)\big\|_{L_2}\\
    =&~\big\|\frac{1}{N}\sum_{j=1}^N \varepsilon_j\scaleKer(x\cdot x_j)\big\|_{L_2},~\text{with}~|\varepsilon_j|\leq M_{\vare},j=1,\ldots,N,
\end{align*}
and derive the following difference inequality
\begin{align*}
    &~|\cF_{q}(\varepsilon_1,\ldots,\varepsilon_j,\ldots,\varepsilon_N)-\cF_{q}(\varepsilon_1,\ldots,\varepsilon_j',\ldots,\varepsilon_N)|\\
    \leq &~\frac{1}{N}(|\varepsilon_j|+|\varepsilon_j'|)\|\scaleKer\|_{L_2}\leq C\rho^{-d/2}N^{-1}M_{\vare}.
\end{align*}
Note that $\bZ_q = \cF_{q}(\vare_1,\ldots,\vare_N)$. An application of \cref{lem:bounded-difference} then yields
	\begin{equation*}
		\begin{aligned}
			\bbP\Big\{\Big|\bZ_q-\bbE\bZ_q\Big|\geq\frac{\epsilon}{2}\Big\}\leq 2\exp\Big(\!-C\frac{2\epsilon^2}{4M_{\vare}^2\sum_{j=1}^N\rho^{-d}N^{-2}}\Big)
			= 2\exp\Big(\!-C\rho^{d}N\epsilon^2M_{\vare}^{-2}\Big),
		\end{aligned}
	\end{equation*}
    which completes the proof.
\end{proof}

From the above theorem, if we select the scaling parameter as $\rho=\mathcal{O}(N^{-\frac{1}{2m+d}})$, then an analogous mean $L_2$-convergence property can be derived. Remarkably, the proposed quasi-interpolation method preserves the same approximation order for noisy scattered data as established in \cref{corol:L2Err-MC} for clean data.

\begin{corollary}(Mean $L_2$-convergence for noisy data under deterministic sampling)
Let $\{\vare_j\}_{j=1}^{N}$ be a set of independent zero-mean random noises satisfying $|\vare_j|\leq M_{\vare}$ with $M_{\vare}>0$. Let $\cQ_{\scaleKer}^{q,\vare}$ be defined in \eqref{eq:MCQI}. Then, under the Assumptions of \cref{corol:L2Err-MC}, by choosing the scale parameter $\rho=\mathcal{O}(N^{\frac{1}{2m+d}})$, there exists a constant $C>0$ independent of $N$ such that  
\begin{equation*}
    \bbE\Big[\|f-\cQ_{\scaleKer}^{q,\vare} f\|_{L_2}^2\Big]\leq C N^{-\frac{2m}{2m+d}}\big(\|f\|_{H^{\sigma}}^2+M_{\vare}^2\big).
\end{equation*}
\end{corollary}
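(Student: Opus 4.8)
The plan is to bypass the concentration (tail) machinery of \cref{thm:L2ProbConv_noise} and instead bound the second moment directly. First I would split, via the elementary inequality $(a+b)^2\le 2a^2+2b^2$,
$$\bbE\big[\|\cQ_{\scaleKer}^{q,\vare} f-f\|_{L_2}^2\big]\le 2\,\bbE\big[\|\QiE f-\cQ_{\scaleKer}^q f\|_{L_2}^2\big]+2\,\|\cQ_{\scaleKer}^q f-f\|_{L_2}^2.$$
This isolates the purely stochastic noise contribution (the first term) from the deterministic clean-data QMC error (the second, which carries no randomness and therefore equals its own expectation).

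For the stochastic term I observe that $\bbE\big[\|\QiE f-\cQ_{\scaleKer}^q f\|_{L_2}^2\big]$ is precisely the quantity $\bbE\bZ_q^2$ appearing in the proof of \cref{thm:L2ProbConv_noise}, where $\bZ_q=\|\QiE f-\cQ_{\scaleKer}^q f\|_{L_2}$. That proof already established, via Fubini, the zero-mean and independence hypotheses on the $\vare_j$ (which annihilate all cross terms), and the kernel bound $\|\scaleKer\|_{L_2}\le C\rho^{-d/2}$ from \cref{lem:ScaleKernelNorm}, the estimate $\bbE\bZ_q^2\le C\rho^{-d}N^{-1}M_{\vare}^2$; so no new stochastic computation is needed. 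The deterministic term is controlled by \cref{thm:QMC_error} with $\tau=0$, giving $\|\cQ_{\scaleKer}^q f-f\|_{L_2}^2\le C\big(\rho^m+\rho^{-\sigma}N^{-\sigma/d}\big)^2\|f\|_{\Hsig}^2$.

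Combining the two pieces yields
$$\bbE\big[\|\cQ_{\scaleKer}^{q,\vare} f-f\|_{L_2}^2\big]\le C\big(\rho^m+\rho^{-\sigma}N^{-\sigma/d}\big)^2\|f\|_{\Hsig}^2+C\rho^{-d}N^{-1}M_{\vare}^2,$$
and the final step is the parameter balancing. Choosing $\rho=\mathcal{O}(N^{-1/(2m+d)})$ forces $\rho^{2m}$ and $\rho^{-d}N^{-1}$ to coincide at the single rate $N^{-2m/(2m+d)}$ that appears in the statement. The only point demanding care --- and the sole genuine obstacle --- is confirming that the mixed QMC term does not degrade this rate: a short exponent computation shows $\rho^{-\sigma}N^{-\sigma/d}\les\rho^m$ exactly when $\sigma\ge d/2$, which holds since $\sigma\ge m>d/2$. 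Consequently $\big(\rho^m+\rho^{-\sigma}N^{-\sigma/d}\big)^2\les\rho^{2m}=\mathcal{O}(N^{-2m/(2m+d)})$, and both surviving terms collapse into $CN^{-2m/(2m+d)}\big(\|f\|_{\Hsig}^2+M_{\vare}^2\big)$, as claimed.
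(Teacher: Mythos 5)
Your proof is correct and follows essentially the same route the paper intends: the paper states this corollary without an explicit proof, remarking only that it follows from \cref{thm:L2ProbConv_noise} analogously to \cref{corol:L2Err-MC}, and the natural filling-in of that remark is precisely your argument --- reuse the second-moment bound $\bbE\bZ_q^2\leq C\rho^{-d}N^{-1}M_{\vare}^2$ already computed in the proof of \cref{thm:L2ProbConv_noise}, combine it with the deterministic QMC bound from \cref{thm:QMC_error} with $\tau=0$, and balance the exponents at $\rho=\mathcal{O}(N^{-\frac{1}{2m+d}})$ (you also correctly use the negative exponent, fixing a sign typo in the paper's statement of the corollary). Your verification that the mixed term $\rho^{-\sigma}N^{-\sigma/d}$ is dominated by $\rho^{m}$ whenever $\sigma\geq d/2$ is the same observation the paper makes in passing in the proof of \cref{thm:L2ProbConv_noise}, so nothing in your argument departs from the paper's approach.
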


Furthermore, in the context of random sampling, we can formulate the following MCQI for noisy data:
\begin{equation}\label{eq:NoisyQI-random}
\cQ_{\scaleKer}^{r,\vare} f=\frac{1}{N}\sum_{j=1}^ {N}\big(f(\bX_j)+\vare_j\big)\scaleKer(x\cdot \bX_j),~~x\in \bbS^d,
\end{equation}
where $\vare_j$ are independent random noise variables with zero-mean and bounded variance $\sigma_{\vare}^2$.
Under this framework, we can establish an $L_2$-probabilistic concentration inequality analogous to that of the deterministic sampling case.
\begin{theorem}($L_2$-probabilistic concentration inequality for noisy data under random sampling)
\label{thm:L2ProbConv_noise_rand}
    Let $\{\vare_j\}_{j=1}^{N}$ be a set of independent random noises with zero-mean and bounded variance $\sigma_{\vare}^2$ satisfying $|\vare_j|\leq M_{\vare}$ with $M_{\vare}>0$. Let $f\in H^{\sigma}(\bbS^d)$ with $\sigma\geq m>d/2$. Let $\cQ_{\scaleKer}^{r,\vare} f$ be defined in \eqref{eq:NoisyQI-random} with the scaled zonal kernel $\scaleKer$ satisfying \cref{assump:kernel}. 
    For any $\epsilon>0$, let $\rho$ and $N$ be chosen such that
    \begin{equation}\label{eq:eps_assump3}
        C_5\Big({\rho}^{m}\|f\|_{H^{\sigma}}+\rho^{-d/2}N^{-1/2}\big(\|f\|_{L_{\infty}}+\sigma_{\vare}\big)\Big)\leq \frac{\epsilon}{2},
    \end{equation}
    where $C_5$ is a constant given in \eqref{eq:L2ProbConv_noise_rand}.
    Then, there exists a constant $C>0$ independent of $\rho$ and $N$ such that
	$$\bbP\Big\{\|\QiE f-f\|_{L_2}\geq\epsilon\Big\}\leq 2\exp\Big(\!-C\rho^{d}N\epsilon^2\big(\|f\|_{L_{\infty}}+M_{\vare}\big)^{-2}\Big).$$
\end{theorem}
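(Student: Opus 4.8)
The plan is to replicate the two-term fluctuation-plus-mean decomposition used in \cref{thm:ProbConv-MC,thm:L2ProbConv_noise}, but to feed \emph{both} randomness sources into a single application of the bounded difference inequality. I would compare the noisy Monte Carlo quasi-interpolant against the deterministic convolution and set $\bZ := \|\cQ_{\scaleKer}^{r,\vare} f - \mathcal{C}_{\scaleKer}f\|_{L_2}$, so that the triangle inequality yields
\[
\|\cQ_{\scaleKer}^{r,\vare} f - f\|_{L_2} \le |\bZ - \bbE\bZ| + \bbE\bZ + \|\mathcal{C}_{\scaleKer}f - f\|_{L_2}.
\]
The last term is purely deterministic and is controlled by \cref{lem:L2ConvErr}, giving $\|\mathcal{C}_{\scaleKer}f - f\|_{L_2}\le C\rho^{m}\|f\|_{H^{\sigma}}$, exactly as in the clean random-sampling setting.

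The only genuinely new analytic step is the bound on $\bbE\bZ$. Using Jensen's inequality and Fubini's theorem I would reduce to estimating $\bbE\big|\cQ_{\scaleKer}^{r,\vare} f(x) - \mathcal{C}_{\scaleKer}f(x)\big|^2$ pointwise and integrating over $x$. I would split the integrand into a clean Monte Carlo fluctuation $A = \tfrac{1}{N}\sum_{j}f(\bX_j)\scaleKer(x\cdot\bX_j) - \mathcal{C}_{\scaleKer}f(x)$ and a noise contribution $B = \tfrac{1}{N}\sum_{j}\vare_j\scaleKer(x\cdot\bX_j)$. Conditioning on $\{\bX_j\}$, the zero-mean property and the independence of the noises from the sample points force $\bbE[B\mid\{\bX_j\}]=0$, so the cross term $\bbE[AB]$ vanishes. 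The clean part reuses the estimate \eqref{eq:MSEerror1}, namely $\bbE|A|^2\le C\rho^{-d}N^{-1}\|f\|_{L_\infty}^2$, while pairwise independence of the $\vare_j$ together with the $L_2$-kernel bound of \cref{lem:ScaleKernelNorm} gives $\bbE|B|^2 = N^{-2}\sum_{j}\sigma_{\vare}^2\,\bbE[\scaleKer^2(x\cdot\bX_j)]\le C\rho^{-d}N^{-1}\sigma_{\vare}^2$. Taking square roots after integration produces $\bbE\bZ \le C\rho^{-d/2}N^{-1/2}(\|f\|_{L_\infty}+\sigma_{\vare})$; collecting the three bounds yields an inequality whose right-hand side is precisely the quantity in \eqref{eq:eps_assump3}, thereby fixing the constant $C_5$ and reducing the target to $\bbP\{|\bZ-\bbE\bZ|\ge\epsilon/2\}$.

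For the concentration step I would regard $\bZ$ as a function $\cF(\bX_1,\dots,\bX_N,\vare_1,\dots,\vare_N)$ of $2N$ independent inputs and verify the bounded difference constants required by \cref{lem:bounded-difference}. Perturbing a single sample point $\bX_j$ alters one summand whose $L_2$-norm is at most $(\|f\|_{L_\infty}+M_{\vare})\|\scaleKer\|_{L_2}$, giving a coefficient $a_j\le C\rho^{-d/2}N^{-1}(\|f\|_{L_\infty}+M_{\vare})$; perturbing a single noise $\vare_j$ contributes $a_{N+j}\le C\rho^{-d/2}N^{-1}M_{\vare}$. Summing squares over all $2N$ coordinates gives $\sum a_j^2\le C\rho^{-d}N^{-1}(\|f\|_{L_\infty}+M_{\vare})^2$, and McDiarmid's inequality delivers the stated tail bound $2\exp\big(-C\rho^{d}N\epsilon^2(\|f\|_{L_\infty}+M_{\vare})^{-2}\big)$.

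I expect the main obstacle to be the simultaneous handling of the two independent randomness sources: one must confirm that the sampling randomness and the noise randomness \emph{decouple} in the second moment, so the cross term disappears and the variance proxy $\sigma_{\vare}^2$ enters the mean bound additively, while they \emph{combine} in the bounded-difference budget, so both families of coefficients $\{a_j\}_{j=1}^{N}$ and $\{a_{N+j}\}_{j=1}^{N}$ must be fed into one McDiarmid sum. Verifying that this combined budget still collapses to the single factor $(\|f\|_{L_\infty}+M_{\vare})^2$ is the key bookkeeping point of the argument.
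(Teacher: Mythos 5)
Your proposal is correct and follows essentially the same route as the paper's proof: the same triangle-inequality decomposition against $\mathcal{C}_{\scaleKer}f$ with $\bZ_r=\|\cQ_{\scaleKer}^{r,\vare}f-\mathcal{C}_{\scaleKer}f\|_{L_2}$, the same use of \cref{lem:L2ConvErr} for the bias, the same zero-mean/independence argument (the paper kills the cross term inside the second moment of a single summand $\bY_{x,\vare}$, you kill it via conditioning on the sample points, which is the same computation) yielding $\bbE\bZ_r\leq C\rho^{-d/2}N^{-1/2}(\|f\|_{L_\infty}+\sigma_{\vare})$, and McDiarmid's inequality for the tail. The only difference is that the paper leaves the concentration step as "parallel to \cref{thm:ProbConv-MC} and \cref{thm:L2ProbConv_noise}" while you execute it explicitly over $2N$ independent coordinates (equivalently one could perturb the $N$ independent pairs $(\bX_j,\vare_j)$), arriving at the same budget $\sum_j a_j^2\leq C\rho^{-d}N^{-1}(\|f\|_{L_\infty}+M_{\vare})^2$ and the stated bound.
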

\begin{proof}
    First, since $X_j$ and $\vare_j$ are independent random variables, we can compute the expectation of $\cQ_{\scaleKer}^{r,\vare} f$ as follows:
    \begin{align*}
        \bbE\cQ_{\scaleKer}^{r,\vare} f=&~\bbE\Big[\frac{1}{N}\sum_{j=1}^ {N}\big(f(\bX_j)+\vare_j\big)\scaleKer(x\cdot \bX_j)\Big]\\
        =&~\bbE\Big[\frac{1}{N}\sum_{j=1}^ {N}f(\bX_j)\scaleKer(x\cdot \bX_j)\Big]+\frac{1}{N}\sum_{j=1}^N\bbE[\vare_j]\bbE[\scaleKer(x\cdot \bX_j)]
        =\mathcal{C}_{\scaleKer}f,
    \end{align*}
    where the last inequality follows from the fact that $\bbE\vare_j=0$, $j=1,\ldots,N$.

    Next, we define the random variable $\bZ_r:=\|\cQ_{\scaleKer}^{r,\vare} f-\mathcal{C}_{\scaleKer}f\|_{L_2}$ and use the triangle inequality to write
    \begin{equation*}
        \|\cQ_{\scaleKer}^{r,\vare} f-f\|_{L_2}\leq |\bZ_r-\bbE\bZ_r|+\bbE\bZ_r+\|\mathcal{C}_{\scaleKer}f-f\|_{L_2}.
    \end{equation*}

   Since the remaining proof closely parallels the arguments in \cref{thm:ProbConv-MC} and \cref{thm:L2ProbConv_noise}, it suffices to estimate the second term and the third term explicitly. Once these two terms are estimated, the proof can be completed by following the same methodology as established therein. 
   
   The third term $\|\mathcal{C}_{\scaleKer}f-f\|_{L_2}$ is already bounded by \cref{lem:L2ConvErr} and inequality \eqref{eq:ProbConv_2}. To estimate the second term, we define the random variable
    $$\bY_{x,\vare}=\big(f(\bX_j)+\vare_j\big)\scaleKer(x\cdot \bX_j),$$
        and compute its second moment as follows:
\begin{equation*}
    \begin{aligned}
        \bbE [\bY_{x,\vare}^2]=&~\bbE f^2(\bX_j)\scaleKer^2(x\cdot\bX_j)+\bbE\vare_j^2\cdot\bbE\scaleKer^2(x\cdot\bX_j)\\
        =&~\int_{\bbS^d}f^2(y)\scaleKer^2(x\cdot y)\d\mu(y)+ \sigma_{\vare}^2\int_{\bbS^d}\scaleKer^2(x\cdot y)\d\mu(y)\\
        \leq &~C\rho^{-d}\big(\|f\|_{L_{\infty}}^2+\sigma_{\vare}^2\big),
    \end{aligned}
\end{equation*}
Using this result, we apply
    \begin{equation*}
        \bbE|\cQ_{\scaleKer}^{r,\vare} f(x)-\mathcal{C}_{\scaleKer}f(x)|^2 \leq N^{-1}\bbE [\bY_{x,\vare}^2],
    \end{equation*}
to obtain 
\begin{equation*}
    \begin{aligned}
    \bbE\bZ_r\leq \sqrt{\bbE\bZ_r^2}= &~\Big(\int_{\bbS^d}\bbE|\cQ_{\scaleKer}^{r,\vare} f(x)-\mathcal{C}_{\scaleKer}f(x)|^2\d\mu(x)\Big)^{1/2}\\
    \leq &~C\rho^{-d/2}N^{-1} \big(\|f\|_{L_{\infty}}+\sigma_{\vare}\big).
    \end{aligned}
\end{equation*}
Thus, we have
\begin{equation}\label{eq:L2ProbConv_noise_rand}
        \|\cQ_{\scaleKer}^{r,\vare} f-f\|_{L_2}\leq |\bZ_r-\bbE\bZ_r|+C_5\Big({\rho}^{m}\|f\|_{H^{\sigma}}+\rho^{-d/2}N^{-1/2}\big(\|f\|_{L_{\infty}}+\sigma_{\vare}\big)\Big).
    \end{equation}
This provides the desired estimate, which completes the proof.
\end{proof}

\section{Numerical examples}
\label{sec:numer_exper}
This section presents a suite of numerical experiments to evaluate the accuracy, stability and computational efficiency of SKQI methods for spherical data approximation.  
Our investigation proceeds through three distinct phases. First, we assess accuracy and convergence on various point sets using Gaussian kernels and compactly-supported kernels satisfying \cref{assump:kernel} with different orders $m$. Second, we analyze the convergence rates of single-level and multilevel quasi-interpolation schemes. Third, we conduct a comparative evaluation of SKQI against filtered hyperinterpolation (FHI) \cite{sloan_2012Geom_filtered} for the approximation of noisy data. The source code is available for reproducibility
at the repository \cite{sun2025githubcode}.
For accuracy tests, we adopt four point sets described in \cite{brauchart-2014MCoM-qmc}:
\begin{itemize}
    \item Pseudo-random points (RD), sampled uniformly on the unit sphere.    
    \item Maximal determinant points (MD), which maximize the determinant associated with interpolation.    
    \item Generalized spiral points (GS), with spherical coordinates $(\theta_{j},\phi_{j})$ given by
$$ z_{j}=1-\frac{2j-1}{N},\quad \theta_{j}=\cos^{-1}(z_{j}),\quad \phi_{j}=1.8\sqrt{N}\theta_{j}\bmod 2\pi,\quad j=1,\ldots,N.$$
 \item Symmetric spherical $t$-designs  (TD) \cite{sloan_2012Geom_filtered} with $N = 2(\lceil t(t+1)/4 \rceil+1)$ points.
\end{itemize}
 
  For random sampling, the maximum empirical mean square error over $J$ independent realizations is defined in \cite{gao-2020SISC-multivariate}:
  \[
     \text{MMSE} := 
        \max_{1 \leq k \leq M} \frac{1}{J} 
        \sum_{i=1}^{J} \left( \cQ_{\scaleKer} ^{r} f(x_k) - f(x_k) \right)^2.
  \]
Here, we set $J=100$ and $M=50000$. The evaluation point set $\{x_k\}_{k=1}^M$ is randomly distributed on the unit sphere.

\subsection{Convergence test}
We investigate the convergence properties of SKQI on various types of point sets distributed on the sphere. The method is tested using scaled Gaussian kernels and compactly-supported (CS) kernels satisfying \cref{assump:kernel} with $m=2,4,6$. The scaled parameter $\rho$ is determined according to our theoretical findings: $\rho = \mathcal{O}(N^{-\frac{1}{2m}})$ for RD in \cref{corol:UniformErr-MC} and $\rho = \mathcal{O}(N^{-\frac{1}{2d}})$ for QMC points in \cref{corol:QMCError}.

The spherical harmonic $\mathcal{Y}_{6,4}$ serves as the target function throughout this experiment.
The results, presented in \cref{fig:combined_L2_errors} and \cref{fig:combined_frank_L2_errors}, report approximation errors and their corresponding convergence rates of SKQI. For RD, the errors are quantified via MMSE, whereas for QMC points, $L_2$ errors are utilized.
The numerical results indicate that SKQI exhibits robust convergence across all point sets. For RD, the observed convergence rate closely conforms to the theoretical rate of $N^{-\frac{2m-d}{2m}}$ established in \cref{corol:UniformErr-MC}.  For QMC points, the convergence rate escalates with increasing order $m$, spanning from approximately $N^{-\frac{1}{2}}$ for $m=2$ to $N^{-\frac{3}{2}}$ for $m=6$, which is consistent with the theoretical bound $\mathcal{O}(N^{-\frac{m}{4}})$ derived in \cref{corol:QMCError} under the configuration $\tau=0$ and $\sigma=m$. Among QMC point sets, symmetric spherical $t$-design (TD) points yield the most favorable approximation errors and optimal convergence rates, presumably attributable to the superior uniformity inherent in this nodal distribution.

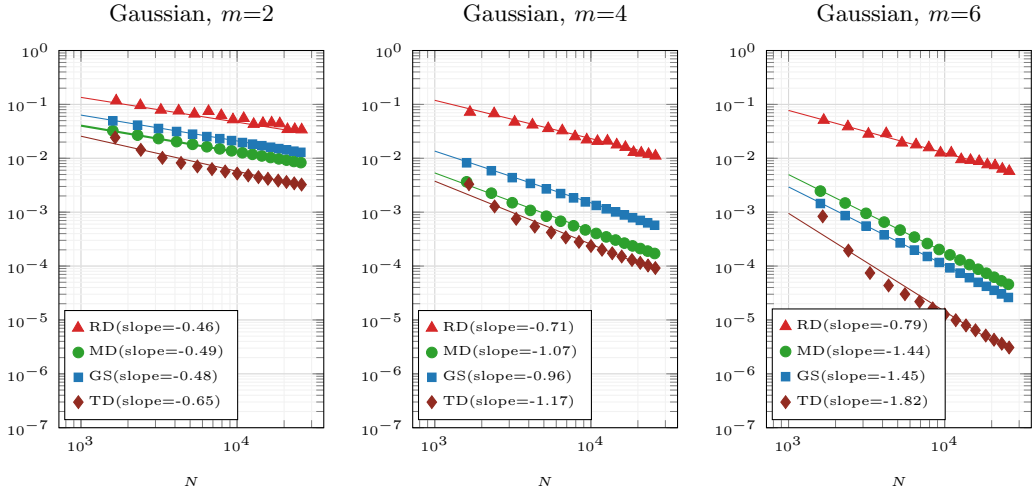
\begin{figure}
    \centering
    \begin{minipage}[t]{0.3\textwidth}
        \centering
        \begin{tikzpicture}
        \begin{loglogaxis}[
            width=1.15\linewidth,
            height=6.6cm,
            xlabel={$N$},
            title={Gaussian, $m$=2},
            legend style={
                at={(0.02,0.02)},
                anchor=south west,
                font=\tiny,
                row sep=0.1pt,
                legend columns=1,
                draw=black,
                fill=white
            },
            legend cell align=left,
            grid=both,
            major grid style={gray!30},
            minor grid style={gray!10},
            tick label style={font=\tiny},
            label style={font=\tiny},
            title style={font=\small},
            log basis x=10,
            log basis y=10,
            ymin=1e-7, ymax=1
        ]
        \addplot+[r1, only marks, mark=triangle*, mark options={solid,scale=1.2}]
            table[x=N, y=L2err0, col sep=tab] {RD_2.dat};
        \addlegendentry{RD(slope=-0.46)}
        \addplot[r1, domain=1e3:2e4, thin, forget plot] {3.235 * x ^ (-0.46)};

         \addplot+[g1, only marks, mark=*, mark options={solid,scale=1}]
            table[x=n, y=L2err0_M, col sep=tab] {together_2.dat};
        \addlegendentry{MD(slope=-0.49)}
        \addplot[g1, domain=1e3:2e4, thick, forget plot] {1.19 * x ^ (-0.49)};

        \addplot+[b1, only marks, mark=square*, mark options={solid,scale=0.8}]
            table[x=n, y=L2err0_G, col sep=tab] {together_2.dat};
        \addlegendentry{GS(slope=-0.48)}
        \addplot[b1, domain=1e3:2e4, thin, forget plot] {1.744 * x ^ (-0.48)};

        \addplot+[colorr, only marks, mark=diamond*, mark options={solid,scale=1.2}]
            table[x=Nx, y=L2err_T, col sep=tab] {together_2_T.dat};
        \addlegendentry{TD(slope=-0.65)}
        \addplot[colorr, domain=1e3:2e4, thin, forget plot] {2.289 * x ^ (-0.65)};
        \end{loglogaxis}
        \end{tikzpicture}
    \end{minipage}
    \hspace{0.01\textwidth}
    \begin{minipage}[t]{0.3\textwidth}
        \centering
        \begin{tikzpicture}
        \begin{loglogaxis}[
            width=1.15\linewidth,
            height=6.6cm,
            xlabel={$N$},
            title={Gaussian, $m$=4},
            legend style={
                at={(0.02,0.02)},
                anchor=south west,
                font=\tiny,
                row sep=0.1pt,
                legend columns=1,
                draw=black,
                fill=white
            },
            legend cell align=left,
            grid=both,
            major grid style={gray!30},
            minor grid style={gray!10},
            tick label style={font=\tiny},
            label style={font=\tiny},
            title style={font=\small},
            log basis x=10,
            log basis y=10,
            ymin=1e-7, ymax=1
        ]
      \addplot+[r1, only marks, mark=triangle*, mark options={solid,scale=1.2}]
            table[x=N, y=L2err0, col sep=tab] {RD_4.dat};
        \addlegendentry{RD(slope=-0.71)}
        \addplot[r1, domain=1e3:2e4, thin, forget plot] {16.06 * x ^ (-0.71)};

         \addplot+[g1, only marks, mark=*, mark options={solid,scale=1.0}]
            table[x=n, y=L2err0_M, col sep=tab] {together_4.dat};
        \addlegendentry{MD(slope=-1.07)}
        \addplot[g1, domain=1e3:2e4, thin, forget plot] {8.674 * x ^ (-1.07)};

        \addplot+[b1, only marks, mark=square*, mark options={solid,scale=0.8}]
            table[x=n, y=L2err0_G, col sep=tab] {together_4.dat};
        \addlegendentry{GS(slope=-0.96)}
        \addplot[b1, domain=1e3:2e4, thin, forget plot] {10.31 * x ^ (-0.96)};

        \addplot+[colorr, only marks, mark=diamond*, mark options={solid,scale=1.2}]
            table[x=Nx, y=L2err_T, col sep=tab] {together_4_T.dat};
        \addlegendentry{TD(slope=-1.17)}
        \addplot[colorr, domain=1e3:2e4, thin, forget plot] {12.17 * x ^ (-1.17)};
        \end{loglogaxis}
        \end{tikzpicture}
    \end{minipage}
    \hspace{0.01\textwidth}
    \begin{minipage}[t]{0.3\textwidth}
        \centering
        \begin{tikzpicture}
        \begin{loglogaxis}[
            width=1.15\linewidth,
            height=6.6cm,
            xlabel={$N$},
            title={Gaussian, $m$=6},
            legend style={
                at={(0.02,0.02)},
                anchor=south west,
                font=\tiny,
                row sep=0.3pt,
                legend columns=1,
                draw=black,
                fill=white,
            },
            legend cell align=left,
            grid=both,
            major grid style={gray!30},
            minor grid style={gray!10},
            tick label style={font=\tiny},
            label style={font=\tiny},
            title style={font=\small},
            log basis x=10,
            log basis y=10,
            ymin=1e-7, ymax=1
        ]
        \addplot+[r1, only marks, mark=triangle*, mark options={solid,scale=1.2}]
            table[x=N, y=L2err0, col sep=tab] {RD_6.dat};
        \addlegendentry{RD(slope=-0.79)}
        \addplot[r1, domain=1e3:2e4, thin, forget plot] {18.13 * x ^ (-0.79)};

        \addplot+[g1, only marks, mark=*, mark options={solid,scale=1}]
            table[x=n, y=L2err0_M, col sep=tab] {together_6.dat};
        \addlegendentry{MD(slope=-1.44)}
        \addplot[g1, domain=1e3:2e4,thin, forget plot] {104.1 * x ^ (-1.44)};

        \addplot+[b1, only marks, mark=square*, mark options={solid,scale=0.8}]
            table[x=n, y=L2err0_G, col sep=tab] {together_6.dat};
        \addlegendentry{GS(slope=-1.45)}
        \addplot[b1, domain=1e3:2e4, thin, forget plot] {65.83 * x ^ (-1.45)};

        \addplot+[colorr, only marks, mark=diamond*, mark options={solid,scale=1.2}]
            table[x=Nx, y=L2err_T, col sep=tab] {together_6_T.dat};
        \addlegendentry{TD(slope=-1.82)}
        \addplot[colorr, domain=1e3:2e4, thin, forget plot] {274.3 * x ^ (-1.82)};
        \end{loglogaxis}
        \end{tikzpicture}
    \end{minipage}
 \vspace{-0.2cm}
   \caption{Numerical errors and convergence rates of SKQI using \textbf{Gaussian kernels} with orders $m=2,4,6$ for approximating spherical harmonic $\mathcal{Y}_{6,4}$ on RD, MD, GS and TD point sets.}

    \label{fig:combined_L2_errors}
\end{figure} 

\begin{figure}
    \centering
    \begin{minipage}[t]{0.3\textwidth}
        \centering
        \begin{tikzpicture}
        \begin{loglogaxis}[
            width=1.15\linewidth,
            height=6.6cm,
            xlabel={$N$},
            title={CS, $m$=2},
            legend style={
                at={(0.02,0.02)},
                anchor=south west,
                font=\tiny,
                row sep=0.1pt,
                legend columns=1,
                draw=black,
                fill=white
            },
            legend cell align=left,
            grid=both,
            major grid style={gray!30},
            minor grid style={gray!10},
            tick label style={font=\tiny},
            label style={font=\tiny},
            title style={font=\small},
            log basis x=10,
            log basis y=10,
            ymin=1e-7, ymax=1
        ]
        \addplot+[r1, only marks, mark=triangle*, mark options={solid,scale=1.2}]
            table[x=N, y=L2err0, col sep=tab] {CS_2.dat};
        \addlegendentry{RD(slope=-0.56)}
        \addplot[r1, domain=1e3:2e4, thin, forget plot] {13.46 * x ^ (-0.56)};

         \addplot+[g1, only marks, mark=*, mark options={solid,scale=1}]
            table[x=N, y=L2err0_M, col sep=tab] {CS_2.dat};
        \addlegendentry{MD(slope=-0.50)}
        \addplot[g1, domain=1e3:2e4, thick, forget plot] {1.01 * x ^ (-0.50)};

        \addplot+[b1, only marks, mark=square*, mark options={solid,scale=0.8}]
            table[x=N, y=L2err0_G, col sep=tab] {CS_2.dat};
        \addlegendentry{GS(slope=-0.49)}
        \addplot[b1, domain=1e3:2e4, thin, forget plot] {1.661 * x ^ (-0.49)};

        \addplot+[colorr, only marks, mark=diamond*, mark options={solid,scale=1.2}]
            table[x=Nx, y=L2err0_T, col sep=tab] {CS_2.dat};
        \addlegendentry{TD(slope=-0.50)}
        \addplot[colorr, domain=1e3:2e4, thin, forget plot] {0.7187 * x ^ (-0.50)};
        \end{loglogaxis}
        \end{tikzpicture}
    \end{minipage}
    \hspace{0.01\textwidth}
    \begin{minipage}[t]{0.3\textwidth}
        \centering
        \begin{tikzpicture}
        \begin{loglogaxis}[
            width=1.15\linewidth,
            height=6.6cm,
            xlabel={$N$},
            title={CS, $m$=4},
            legend style={
                at={(0.02,0.02)},
                anchor=south west,
                font=\tiny,
                row sep=0.1pt,
                legend columns=1,
                draw=black,
                fill=white
            },
            legend cell align=left,
            grid=both,
            major grid style={gray!30},
            minor grid style={gray!10},
            tick label style={font=\tiny},
            label style={font=\tiny},
            title style={font=\small},
            log basis x=10,
            log basis y=10,
            ymin=1e-7, ymax=1
        ]
      \addplot+[r1, only marks, mark=triangle*, mark options={solid,scale=1.2}]
            table[x=N, y=L2err0, col sep=tab] {CS_4.dat};
        \addlegendentry{RD(slope=-0.61)}
        \addplot[r1, domain=1e3:2e4, thin, forget plot] {15.18 * x ^ (-0.61)};

         \addplot+[g1, only marks, mark=*, mark options={solid,scale=1.0}]
            table[x=N, y=L2err0_M, col sep=tab] {CS_4.dat};
        \addlegendentry{MD(slope=-1.00)}
        \addplot[g1, domain=1e3:2e4, thin, forget plot] {5.504 * x ^ (-1.00)};

        \addplot+[b1, only marks, mark=square*, mark options={solid,scale=0.8}]
            table[x=N, y=L2err0_G, col sep=tab] {CS_4.dat};
        \addlegendentry{GS(slope=-0.99)}
        \addplot[b1, domain=1e3:2e4, thin, forget plot] {9.438 * x ^ (-0.99)};

        \addplot+[colorr, only marks, mark=diamond*, mark options={solid,scale=1.2}]
            table[x=Nx, y=L2err0_T, col sep=tab] {CS_4.dat};
        \addlegendentry{TD(slope=-0.99)}
        \addplot[colorr, domain=1e3:2e4, thin, forget plot] {3.263 * x ^ (-0.99)};
        \end{loglogaxis}
        \end{tikzpicture}
    \end{minipage}
    \hspace{0.01\textwidth}
    \begin{minipage}[t]{0.3\textwidth}
        \centering
        \begin{tikzpicture}
        \begin{loglogaxis}[
            width=1.15\linewidth,
            height=6.6cm,
            xlabel={$N$},
            title={CS, $m$=6},
            legend style={
                at={(0.02,0.02)},
                anchor=south west,
                font=\tiny,
                row sep=0.3pt,
                legend columns=1,
                draw=black,
                fill=white,
            },
            legend cell align=left,
            grid=both,
            major grid style={gray!30},
            minor grid style={gray!10},
            tick label style={font=\tiny},
            label style={font=\tiny},
            title style={font=\small},
            log basis x=10,
            log basis y=10,
            ymin=1e-7, ymax=1
        ]
        \addplot+[r1, only marks, mark=triangle*, mark options={solid,scale=1.2}]
            table[x=N, y=L2err0, col sep=tab] {CS_6.dat};
        \addlegendentry{RD(slope=-0.65)}
        \addplot[r1, domain=1e3:2e4, thin, forget plot] {10.34 * x ^ (-0.65)};

        \addplot+[g1, only marks, mark=*, mark options={solid,scale=1}]
            table[x=N, y=L2err0_M, col sep=tab] {CS_6.dat};
        \addlegendentry{MD(slope=-1.49)}
        \addplot[g1, domain=1e3:2e4,thin, forget plot] {45.2 * x ^ (-1.49)};

        \addplot+[b1, only marks, mark=square*, mark options={solid,scale=0.8}]
            table[x=N, y=L2err0_G, col sep=tab] {CS_6.dat};
        \addlegendentry{GS(slope=-1.48)}
        \addplot[b1, domain=1e3:2e4, thin, forget plot] {82.64 * x ^ (-1.48)};

        \addplot+[colorr, only marks, mark=diamond*, mark options={solid,scale=1.2}]
            table[x=Nx, y=L2err0_T, col sep=tab] {CS_6.dat};
        \addlegendentry{TD(slope=-1.57)}
        \addplot[colorr, domain=1e3:2e4, thin, forget plot] {25.36 * x ^ (-1.57)};
        \end{loglogaxis}
        \end{tikzpicture}
    \end{minipage}
 \vspace{-0.2cm}
   \caption{Numerical errors and convergence rates of SKQI using \textbf{compactly-supported kernels} with $m=2,4,6$ for approximating spherical harmonic $\mathcal{Y}_{6,4}$ on RD, MD, GS and TD point sets.}

    \label{fig:combined_frank_L2_errors}
\end{figure}
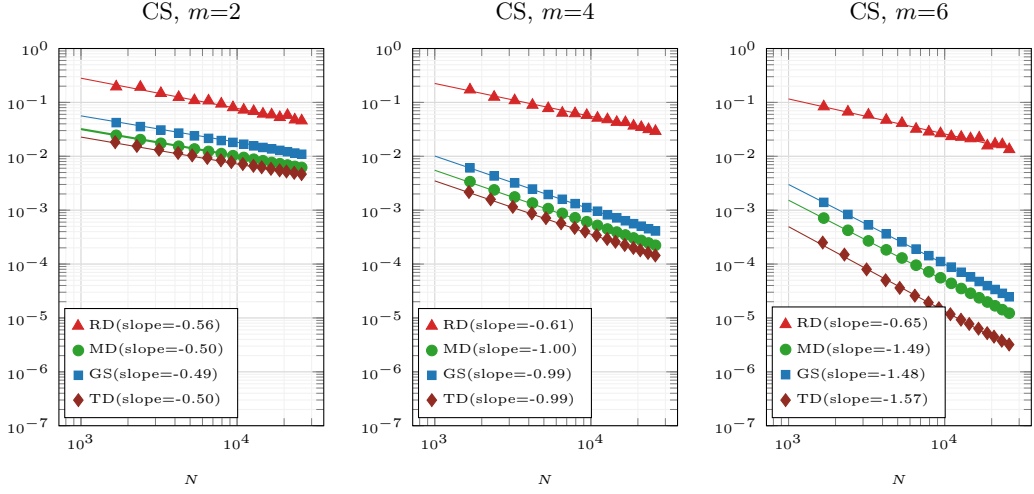

\subsection{Comparison of single-level and multilevel quasi-interpolation}
In this section, we conduct a comparative analysis of single-level and multilevel quasi-interpolation methods on the unit sphere. We employ MD point sets with cardinalities $N = 12^2, 24^2, 48^2, 96^2, 192^2$. The scaling parameter $\rho$ is selected as $\mathcal{O}(N^{-\frac{1}{4}})$.
\cref{fig.L2conv_randGaussian} illustrates the $L_\infty$ and $L_2$ errors of single-level and multilevel quasi-interpolation for clean data. The results reveal that the multilevel scheme substantially outperforms the single-level method in both accuracy and convergence, with higher kernel order $m$ exhibiting accelerated convergence rates.

To further demonstrate the generality of the proposed method, we also utilize the Franke function as a test case:
\begin{equation}\label{eq:frank}
\begin{aligned}
f(x, y, z) =\;& 0.75 \exp\big( -(9x - 2)^2/4- (9y - 2)^2/4-(9z - 2)^2/4 \big) \\
& + 0.75 \exp\big( -(9x + 1)^2/49-(9y + 1)/10 - (9z + 1)/10 \big) \\
& + 0.5 \exp\big( -(9x - 7)^2/4 - (9y - 3)^2/4- (9z - 5)^2/4 \big) \\
& - 0.2 \exp\big( -(9x - 4)^2 - (9y - 7)^2 - (9z - 5)^2 \big), \quad (x,y,z) \in \mathbb{S}^2.
\end{aligned}
\end{equation} 
\cref{tab:single-multi-results} tabulates the approximation errors for noisy data using both methods under noise levels $\sigma_{\varepsilon} = 0.01$ and $\sigma_{\varepsilon} = 0.1$. The findings establish that the multilevel scheme maintains lower errors and superior convergence rates compared to the single-level scheme, even in the presence of noise. Additionally, \cref{fig:error_comparison} depicts the $L_\infty$ and $L_2$ errors under noise levels $\sigma_{\varepsilon} = 0.001, 0.01, 0.1$, which corroborates that the multilevel scheme achieves smaller errors than the single-level scheme.

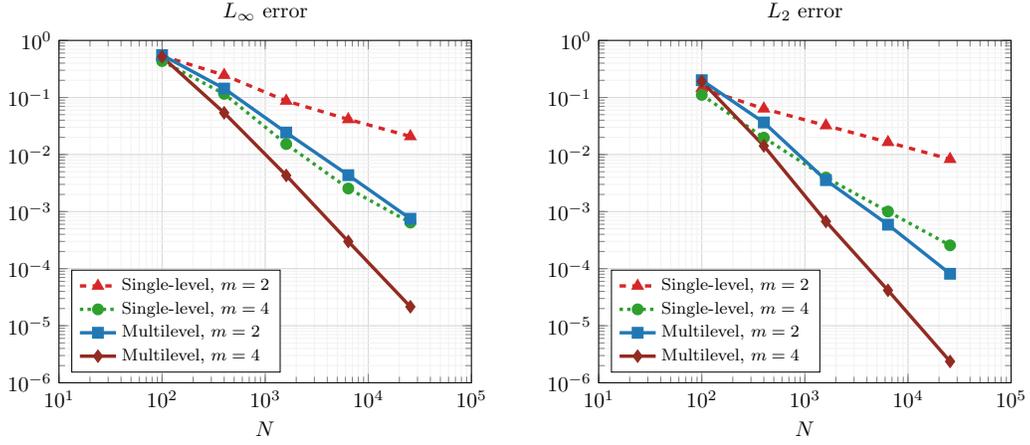
\begin{figure}
	\centering
 \begin{tikzpicture}[scale=0.8]
		\begin{loglogaxis}[
			    grid=both,
                major grid style={gray!30},
                minor grid style={gray!10},
			mark size = 1.5pt,
			xmin = 1e1, xmax = 1e5,
			ymin = 10^(-6),ymax = 10^(-0),
			xlabel={$N$},
			title={$L_{\infty}$ error},
			legend cell align = {left},
			legend pos = south west,
			legend style = {font=\footnotesize},
			legend entries={{Single-level, $m=2$},{Single-level, $m=4$}, {Multilevel, $m=2$},{Multilevel, $m=4$}}]
			\addplot  +[ultra thick,dashed, mark = triangle*,mark options={solid}, mark size = 2pt, color = r1] table [x=N,y=Linferr_m2] {Linferr_gauss_sphHarm64_single_N.dat};
			\addplot  +[ultra thick,  dotted, mark = *,mark options={solid},mark size = 2pt, color = g1] table [x=N,y=Linferr_m4] {Linferr_gauss_sphHarm64_single_N.dat};
            \addplot  +[ultra thick, mark =square*,mark options={solid}, mark size = 2pt, color = b1] table [x=N,y=Linferr_m2] {Linferr_gauss_sphHarm64_multi_N.dat};
			\addplot  +[ultra thick,  mark =diamond*, mark size = 2pt, color = colorr] table [x=N,y=Linferr_m4] {Linferr_gauss_sphHarm64_multi_N.dat};            
		\end{loglogaxis}
	\end{tikzpicture}
 	\hspace{0.5cm}
 \begin{tikzpicture}[scale=0.8]
		\begin{loglogaxis}[
			    grid=both,
                major grid style={gray!30},
                minor grid style={gray!10},
			xmin = 1e1, xmax = 1e5,
			ymin = 10^(-6),ymax = 10^(-0),
			xlabel={$N$},
			title={$L_2$ error},
			legend cell align = {left},
			legend pos = south west,
			legend style = {font=\footnotesize},
			legend entries={{Single-level, $m=2$},{Single-level, $m=4$}, {Multilevel, $m=2$},{Multilevel, $m=4$}}]
\addplot  +[ultra thick, dashed, mark = triangle*, mark options={solid}, mark size = 2pt, color = r1] 
    table [x=N,y=L2err_m2] {L2err_gauss_sphHarm64_single_N.dat};
    
\addplot  +[ultra thick, dotted, mark =*, mark options={solid}, mark size = 2pt, color =g1] 
    table [x=N,y=L2err_m4] {L2err_gauss_sphHarm64_single_N.dat};
    
\addplot  +[ultra thick, mark = square*, mark options={solid}, mark size = 2pt, color = b1] 
    table [x=N,y=L2err_m2] {L2err_gauss_sphHarm64_multi_N.dat};
    
\addplot  +[ultra thick, mark =diamond*, mark options={solid}, mark size = 2pt, color = colorr] 
    table [x=N,y=L2err_m4] {L2err_gauss_sphHarm64_multi_N.dat};

		\end{loglogaxis}
	\end{tikzpicture}
    \vspace{-0.2cm}
	\caption{$L_\infty$ and $L_2$ errors for approximating $\mathcal{Y}_{6,4}$ using single-level and multilevel SKQI methods with compactly-supported kernels ($m=2,4$) on MD point sets. The left figure corresponds to $L_{\infty}$ error, and the right corresponds to $L_2$ error.}
	\label{fig.L2conv_randGaussian}
\end{figure}

\begin{table}
\centering
\caption{Approximation errors of single-level and multilevel SKQI methods using the compactly-supported kernel with $m=2$ for approximating Franke function under two noise levels ($\sigma_{\vare}=0.01$ and $\sigma_{\vare}=0.1$) on MD point sets.}
 \vspace{-0.2cm}
\label{tab:single-multi-results}
\begin{tabular}{c|cc|cc}
\toprule
\multirow{2}{*}{$N$} & \multicolumn{2}{c|}{Noise level $\sigma_{\vare} = 0.01$} & \multicolumn{2}{c}{Noise level $\sigma_{\vare} = 0.1$} \\
& $L_{\infty}$ error & $L_{2}$ error & $L_{\infty}$ error & $L_{2}$ error \\
\midrule
\multicolumn{5}{c}{Single-level} \\
\midrule
$12^2$ & $7.70 \times 10^{-1}$ & $2.62 \times 10^{-1}$ & $9.70 \times 10^{-1}$ & $3.08 \times 10^{-1}$ \\
$24^2$ & $5.03 \times 10^{-1}$ & $1.65 \times 10^{-1}$ & $6.01 \times 10^{-1}$ & $1.94 \times 10^{-1}$ \\
$48^2$ & $2.97 \times 10^{-1}$ & $9.32 \times 10^{-2}$ & $4.38 \times 10^{-1}$ & $1.17 \times 10^{-1}$ \\
$96^2$ & $1.94 \times 10^{-1}$ & $4.95 \times 10^{-2}$ & $2.73 \times 10^{-1}$ & $7.18 \times 10^{-2}$ \\
$192^2$ & $1.12 \times 10^{-1}$ & $2.55 \times 10^{-2}$ & $1.72 \times 10^{-1}$ & $4.33 \times 10^{-2}$ \\
\midrule
\multicolumn{5}{c}{Multilevel} \\
\midrule
$12^2$ & $7.78 \times 10^{-1}$ & $2.98 \times 10^{-1}$ & $8.08 \times 10^{-1}$ & $3.00 \times 10^{-1}$ \\
$24^2$ & $2.59 \times 10^{-1}$ & $9.34 \times 10^{-2}$ & $2.88 \times 10^{-1}$ & $9.72 \times 10^{-2}$ \\
$48^2$ & $6.84 \times 10^{-2}$ & $1.73 \times 10^{-2}$ & $1.31 \times 10^{-1}$ & $3.17 \times 10^{-2}$ \\
$96^2$ & $4.15 \times 10^{-2}$ & $1.01 \times 10^{-2}$ & $8.23 \times 10^{-2}$ & $1.98 \times 10^{-2}$ \\
$192^2$ & $2.95 \times 10^{-2}$ & $6.82 \times 10^{-3}$ & $5.98 \times 10^{-2}$ & $1.36 \times 10^{-2}$ \\
\bottomrule
\end{tabular}
\end{table}

\begin{figure}
    \centering
\begin{minipage}[t]{0.48\textwidth}
    \centering
    \begin{tikzpicture}
        \begin{loglogaxis}[
            width=\linewidth, height=6.5cm,
            grid=both,
            xmin=100, xmax=70000,
            ymin=1e-4, ymax=1e0,
            xlabel={$N$},
            title={$L_{\infty}$-error},
            legend style={
                font=\tiny,
                at={(0.02,0.02)},
                anchor=south west,
                cells={anchor=west},
                draw=black,
                fill=white
            },
            legend cell align=left,
            tick label style={font=\footnotesize},
            label style={font=\footnotesize},
            title style={font=\footnotesize},
            major grid style={gray!30},
            minor grid style={gray!10},
            log basis x=10,
            log basis y=10
        ]
        \pgfplotstableread[col sep=space]{multi_single_frank.dat}\datatable
        \pgfplotstableread[col sep=space]{multi_single_frank0001.dat}\datatablenew
        \addplot+[red!80!black, thick, solid, mark=triangle*, mark size=2.5pt, mark options={solid}, forget plot]
            table[x=all_N, y=INF_Error_multi] {\datatablenew};
        \addplot+[blue!80!black, thick, solid, mark=square*, mark size=1.8pt, mark options={solid}, forget plot]
            table[x=all_N, y=INF_Error_multi(col2)] {\datatable};
        \addplot+[green!80!black, thick, solid, mark=*, mark size=2pt, mark options={solid}, forget plot]
            table[x=all_N, y=INF_Error_multi(col3)] {\datatable};
        \addplot+[colorr, line width=1pt, dashed, mark=diamond*, mark size=2pt, mark options={solid},forget plot]
            table[x=all_N, y=inf_err_sphQI(col1)] {\datatable};

        \addplot+[coloro, line width=1pt, dashed, mark=pentagon*, mark size=2pt, mark options={solid}, forget plot]
            table[x=all_N, y=inf_err_sphQI(col2)] {\datatable};

        \addplot+[colordgr, line width=1pt, dashed, mark=star, mark size=2pt, mark options={solid}, forget plot]
            table[x=all_N, y=inf_err_sphQI(col3)] {\datatable};
\node[anchor=south west, fill=white, draw=black, inner sep=3pt] at (rel axis cs:0.02,0.02) {
      \begin{tikzpicture}[font=\Large, baseline, scale=0.55, transform shape]
      \node[anchor=west,font=\Large] at (0.3,0) {Single-level:};
      \draw[colorr, thick, dash pattern=on 1.5pt off 1.2pt] (1.7,0) -- (2.1,0);
      \node[text=colorr, anchor=west, inner sep=2.5pt] at (1.8,0) {$\blacklozenge$};
      \node[anchor=west] at (2.1,0) {$0.001,$};
      
      \draw[coloro, thick, dash pattern=on 1.5pt off 1.2pt] (2.9,0) -- (3.3,0);
      \node[draw=coloro, fill=coloro, regular polygon, regular polygon sides=5,
      minimum size=4.5pt, inner sep=0pt, anchor=west, transform shape, scale=2.6] 
      at (3.02,0) {};      
      \node[anchor=west] at (3.3,0) {$0.01,$};
      
      \draw[colordgr, thick, dash pattern=on 1.5pt off 1.2pt] (4.1,0) -- (4.5,0);
      \node[text=colordgr, anchor=west] at (4.12,0){$\bigstar$};
      \node[anchor=west] at (4.5,0) {$0.1$};

     \node[ anchor=west] at (0.3,-0.7) {Multilevel:};
      \draw[r1, thick] (1.75,-0.7) -- (2.1,-0.7); 
      \node[text=r1, anchor=west, inner sep=2.5pt] at (1.8,-0.7){$\blacktriangle$};
      \node[anchor=west] at (2.1,-0.7) {$0.001,$};
      
      \draw[blue!80!black, thick] (2.9,-0.7) -- (3.3,-0.7); 
      \node[text=blue!80!black, anchor=west,inner sep=2.5pt] at (2.95,-0.7) {$\blacksquare$};
      \node[anchor=west] at (3.3,-0.7) {$0.01,$};
      
      \draw[g1, thick] (4.1,-0.7) -- (4.5,-0.7); 
      \node[text=g1, anchor=west, inner sep=2.5pt] at (4.15,-0.7) {\scalebox{1.5}{$\bullet$}};
      \node[anchor=west] at (4.5,-0.7) {$0.1$};

    \end{tikzpicture}
  };
        \end{loglogaxis}
    \end{tikzpicture}
\end{minipage}
\hfill
\begin{minipage}[t]{0.48\textwidth}
    \centering
    \begin{tikzpicture}
        \begin{loglogaxis}[
            width=\linewidth, height=6.5cm,
            grid=both,
            xmin=100, xmax=70000,
            ymin=1e-4, ymax=1e0,
            xlabel={$N$},
            title={$L_2$-error},
            legend style={
                font=\tiny,
                at={(0.02,0.02)},
                anchor=south west,
                cells={anchor=west}
            },
            legend cell align=left,
            tick label style={font=\footnotesize},
            label style={font=\footnotesize},
            title style={font=\footnotesize},
            major grid style={gray!30},
            minor grid style={gray!10},
            log basis x=10,
            log basis y=10
        ]

        \pgfplotstableread[col sep=space]{multi_single_frank.dat}\datatable
        \pgfplotstableread[col sep=space]{multi_single_frank0001.dat}\datatablenew

\addplot+[red!80!black, thick, solid, mark=triangle*, mark size=2.5pt, mark options={solid}, forget plot]
    table[x=all_N, y=L2err_multi] {\datatablenew};
\addplot+[blue!80!black, thick, solid, mark=square*, mark size=1.8pt, mark options={solid}, forget plot]
    table[x=all_N, y=L2err_multi(col2)] {\datatable};
\addplot+[green!80!black, thick, solid, mark=*, mark size=2pt, mark options={solid}, forget plot]
    table[x=all_N, y=L2err_multi(col3)] {\datatable};

\addplot+[colorr, line width=1pt, dashed, mark=diamond*, mark size=2pt, mark options={solid}, forget plot]
    table[x=all_N, y=L2err_sphQI(col1)] {\datatable};
\addplot+[coloro, line width=1pt, dashed, mark=pentagon*, mark size=2pt, mark options={solid}, forget plot]
    table[x=all_N, y=L2err_sphQI(col2)] {\datatable};
\addplot+[colordgr, line width=1pt, dashed, mark=star, mark size=2pt, mark options={solid}, forget plot]
    table[x=all_N, y=L2err_sphQI(col3)] {\datatable};

\node[anchor=south west, fill=white, draw=black, inner sep=3pt] at (rel axis cs:0.02,0.02) {
      \begin{tikzpicture}[font=\Large, baseline, scale=0.55, transform shape]
      \node[anchor=west,font=\Large] at (0.3,0) {Single-level:};
      \draw[colorr, thick, dash pattern=on 1.5pt off 1.2pt] (1.7,0) -- (2.1,0);
      \node[text=colorr, anchor=west, inner sep=2.5pt] at (1.8,0) {$\blacklozenge$};
      \node[anchor=west] at (2.1,0) {$0.001,$};
      
      \draw[coloro, thick, dash pattern=on 1.5pt off 1.2pt] (2.9,0) -- (3.3,0);
      \node[draw=coloro, fill=coloro, regular polygon, regular polygon sides=5,
      minimum size=4.5pt, inner sep=0pt, anchor=west, transform shape, scale=2.6] 
      at (3.02,0) {};      
      \node[anchor=west] at (3.3,0) {$0.01,$};
      
      \draw[colordgr, thick, dash pattern=on 1.5pt off 1.2pt] (4.1,0) -- (4.5,0);
      \node[text=colordgr, anchor=west] at (4.12,0){$\bigstar$};
      \node[anchor=west] at (4.5,0) {$0.1$};

     \node[ anchor=west] at (0.3,-0.7) {Multilevel:};
      \draw[r1, thick] (1.75,-0.7) -- (2.1,-0.7); 
      \node[text=r1, anchor=west, inner sep=2.5pt] at (1.8,-0.7){$\blacktriangle$};
      \node[anchor=west] at (2.1,-0.7) {$0.001,$};
      
      \draw[blue!80!black, thick] (2.9,-0.7) -- (3.3,-0.7); 
      \node[text=blue!80!black, anchor=west,inner sep=2.5pt] at (2.95,-0.7) {$\blacksquare$};
      \node[anchor=west] at (3.3,-0.7) {$0.01,$};
      
      \draw[g1, thick] (4.1,-0.7) -- (4.5,-0.7); 
      \node[text=g1, anchor=west, inner sep=2.5pt] at (4.15,-0.7) {\scalebox{1.5}{$\bullet$}};
      \node[anchor=west] at (4.5,-0.7) {$0.1$};     

    \end{tikzpicture}
  };

        \end{loglogaxis}
    \end{tikzpicture}
\end{minipage}
\vspace{-0.2cm}
\caption{$L_\infty$ and $L_2$ errors for approximating Franke function \eqref{eq:frank} under various noise levels ($\sigma_{\vare}=0.001, 0.01, 0.1$) using single-level and multilevel SKQI methods with the compactly-supported kernel ($m=2$) on MD point sets.}
\label{fig:error_comparison}
\end{figure}
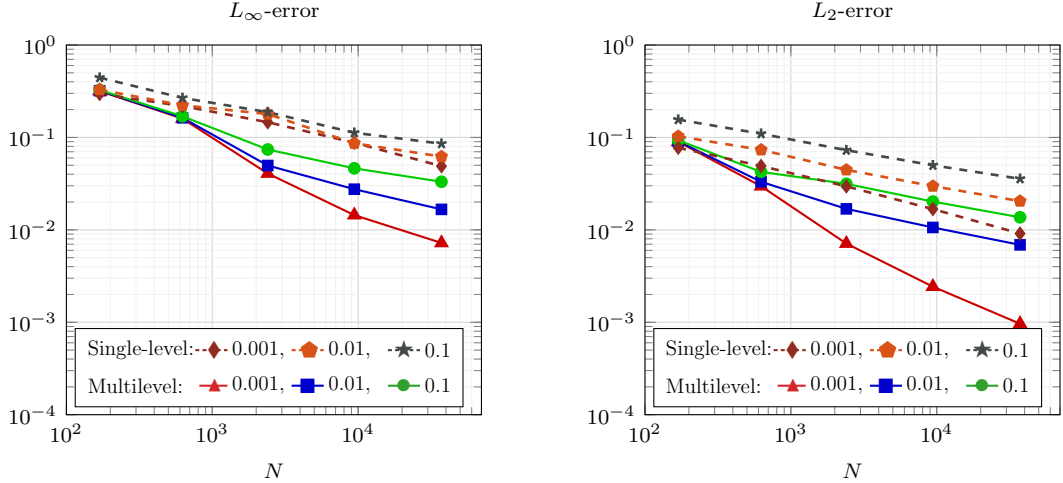

\subsection{Comparison with FHI}
The authors of \cite{sun-gao-sun} established that SKQI surpasses the hyperinterpolation (HI) method for noisy data with respect to both convergence properties and computational efficiency. Drawing upon these observations, in this section we evaluate the proposed SKQI methods, encompassing QMCQI on SD point sets and MCQI on random points, against the filtered hyperinterpolation approach formulated in
\cite{sloan_2012Geom_filtered}:
$$V_L^{(a)}f(x)=\sum_{\ell=0}^{\bar{L}(a)}h\Big(\frac{\ell}{L}\Big)\sum_{k=1}^{Z(d,\ell)}\Hatf\sphHarm(x),~\bar{L}(a):=\max(\lceil aL\rceil-1,L),$$
with the filtered kernel defined as \cite{filbir-2008MathNachr-polynomial, sloan_2012Geom_filtered}:
\[
h(x) = 
\begin{cases} 
1, & x \in [0, 1], \\[6pt]
\exp\Big(\dfrac{2\exp\left(-2 [y(x)]^{-1}\right)}{ y(x)-1}\Big), & x \in (1, a), \\[6pt]
0, & x \in [a, \infty),
\end{cases}
\]
where $y(x) = \dfrac{x - 1}{a - 1}$ and the parameter is chosen as $a = 1.2$.

\cref{fig:combined} displays the $L_2$ errors of FHI, QMCQI ($m=2,4$) on SD point sets and MCQI ($m=2,4$) under two noise levels: $\sigma_{\vare}=0.01$ and $\sigma_{\vare}=0.1$. It can be observed that for both QMCQI and MCQI, the $L_2$ approximation error decreases as $N$ increases, with higher-order kernels conferring enhanced accuracy. Remarkably, both SKQI methods sustain stable convergence even at the elevated noise level ($\sigma_{\vare}=0.1$), while filtered hyperinterpolation fails to converge in both cases. \cref{fig:computation_time_three} contrasts the computational costs among HI \cite{sloan_1995JAT_polynomial}, FHI and our QMCQI method. The result demonstrates that the proposed quasi-interpolation method delivers markedly superior computational performance.

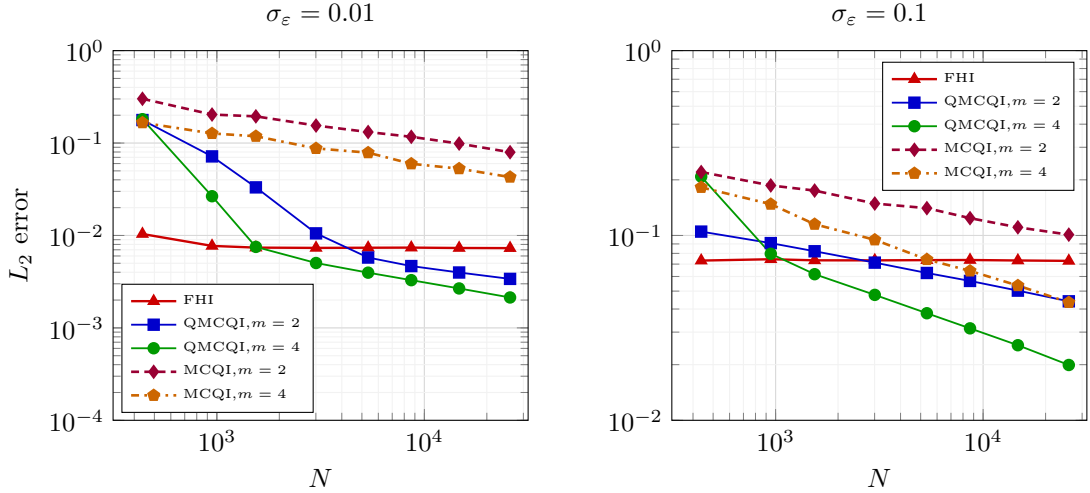
\begin{figure}
    \centering
    \begin{minipage}{0.48\textwidth}
        \centering
        \begin{tikzpicture}
            \begin{loglogaxis}[
                width=\linewidth, height=6.5cm,
                grid=both,
                major grid style={gray!30},
                minor grid style={gray!10},
                xmin= 316.2278, xmax=31622.78,
                ymin=1e-4, ymax=1e0,
                xlabel={$N$},
                ylabel={$L_2$ error},
                title={$\sigma_{\varepsilon}=0.01$},
                legend style={
                    font=\tiny,
                    at={(0.02,0.02)},
                    anchor=south west,
                    cells={anchor=west}
                }
            ]
                \pgfplotstableread[col sep=space]{filter_001_4_2.dat}\datatable
                \pgfplotstableread[col sep=space]{L2err_hyper_001.dat}\hyperdata
                \pgfplotstableread{qi_nosie_4_02.dat}\datatabletwo
                \pgfplotstableread[col sep=space]{qi_nosie_RD001_2.dat}\datatablethree
                \pgfplotstableread[col sep=space]{qi_nosie_RD001_4.dat}\datatablefour                             
                
                \addplot+[color=red!80!black, solid, line width=0.9pt, mark=triangle*, mark options={solid, fill=red!80!black}, mark size=2pt] 
                    table[x=Nx, y=L2err_hyper] {\hyperdata};
                \addlegendentry{FHI}
                
                \addplot+[color=blue!80!black, solid, line width=0.7pt, mark=square*, mark options={solid, fill=blue!80!black}, mark size=2pt]
                    table[x=Nx, y=L2err_sphQI_2] {\datatable};
                \addlegendentry{QMCQI,$m=2$}

                \addplot+[color=green!60!black, solid, line width=0.7pt, mark=*, mark options={solid, fill=green!60!black}, mark size=2pt] 
                    table[x=Nx, y=L2err_sphQI] {\datatabletwo};
                \addlegendentry{QMCQI,$m=4$}

                \addplot+[color=purple!80!black, line width=1pt, densely dashed, mark=diamond*, mark options={solid, fill=purple!80!black}, mark size=2pt]
                    table[x=Nx, y=L2err0] {\datatablethree};
                \addlegendentry{MCQI,$m=2$}

                \addplot+[color=orange!80!black, line width=1pt, dashdotted, mark=pentagon*, mark options={solid, fill=orange!80!black}, mark size=2pt]
                    table[x=Nx, y=L2err0] {\datatablefour};
                \addlegendentry{MCQI,$m=4$}                

            \end{loglogaxis}
        \end{tikzpicture}
    \end{minipage}
    \hfill 
    \begin{minipage}{0.48\textwidth}
        \centering
        \begin{tikzpicture}
            \begin{loglogaxis}[
                width=\linewidth, height=6.5cm,
                grid=both,
                major grid style={gray!30},
                minor grid style={gray!10},
                xmin= 316.2278, xmax=31622.78,
                ymin=1e-2, ymax=1e0,
                xlabel={$N$},
                title={$\sigma_{\varepsilon}=0.1$},
                legend cell align = {left},
                legend pos = north east,
                legend style = {font=\tiny},
                legend entries={
                    {FHI},
                    {QMCQI,$m=2$},
                    {QMCQI,$m=4$}, 
                    {MCQI,$m=2$},
                    {MCQI,$m=4$}
                }
            ]
            \pgfplotstableread[col sep=space]{qi_nosie_01_2.dat}\datatable
            \pgfplotstableread[col sep=space]{L2err_hyper_01.dat}\hyperdata
            \pgfplotstableread[col sep=space]{qi_nosie01_4_02.dat}\datatabletwo
            \pgfplotstableread[col sep=space]{qi_nosie_RD01_2.dat}\datatablethree
            \pgfplotstableread[col sep=space]{qi_nosie_RD01_4.dat}\datatablefour

                \addplot+[color=red!80!black, solid, line width=0.9pt, mark=triangle*, mark options={solid, fill=red!80!black}, mark size=2pt] 
                    table[x=Nx, y=L2err_hyper] {\hyperdata};
                    
                \addplot+[color=blue!80!black, solid, line width=0.7pt, mark=square*, mark options={solid, fill=blue!80!black}, mark size=2pt] 
                    table[x=Nx, y=L2err_sphQI] {\datatable};
                    
                \addplot+[color=green!60!black, solid, line width=0.7pt, mark=*, mark options={solid, fill=green!60!black}, mark size=2pt] 
                    table[x=Nx, y=L2err_sphQI] {\datatabletwo};
                    
                \addplot+[color=purple!80!black, line width=1pt, densely dashed, mark=diamond*, mark options={solid, fill=purple!80!black}, mark size=2pt]
                    table[x=Nx, y=L2err0] {\datatablethree};
                    
                \addplot+[color=orange!80!black, line width=1pt, dashdotted, mark=pentagon*, mark options={solid, fill=orange!80!black}, mark size=2pt]
                    table[x=Nx, y=L2err0] {\datatablefour};
                    
            \end{loglogaxis}
        \end{tikzpicture}
    \end{minipage}

    \vspace{-0.2cm}
    \caption{$L_2$ errors of FHI, QMCQI ($m=2,4$) and MCQI ($m=2,4$) for approximating Franke function \eqref{eq:frank} under two noise levels, $\sigma_{\varepsilon}=0.01$ and $\sigma_{\varepsilon}=0.1$.}
    \label{fig:combined}
\end{figure}

\begin{figure}
\centering
\begin{tikzpicture}[scale=0.8]
\begin{axis}[
    ybar=0pt,
    bar width=10pt,
    width=13cm,
    height=7cm,
    enlarge x limits=0.2,
    ylabel={Computation time (s)},
    xlabel={$N$},
    symbolic x coords={1328,2704,4562,9732,13044},
    xtick=data,
    ymin=0,
    ymax=20,
    ymajorgrids=true,
    grid style={dashed,gray!30},
    legend style={
        at={(0.5,-0.2)},
        anchor=north,
        legend columns=3,
        /tikz/every even column/.append style={column sep=0.5cm},
        font=\footnotesize,
        draw=none
    },
    nodes near coords,
    every node near coord/.append style={font=\tiny, yshift=2pt},
    nodes near coords align={vertical},
]

\addplot+[style={fill=b1}, bar shift=-12pt]
    coordinates {
       (1328,1.4) (2704,3.2) (4562,5.2) (9732,11.7) (13044,16.5)
    };
\addlegendentry{FHI}

\addplot+[style={fill=g1}, bar shift=0pt]
    coordinates {
       (1328,1.2) (2704,2.7) (4562,4.3) (9732,9.7) (13044,13.6)
    };
\addlegendentry{HI}

\addplot+[style={fill=r1}, bar shift=12pt] 
    coordinates {
       (1328,1.2) (2704,1.9) (4562,2.7) (9732,4.3) (13044,4.6)
    };
\addlegendentry{QMCQI}

\end{axis}
\end{tikzpicture}
 \vspace{-0.2cm}
\caption{
  Computational times of HI, FHI, and QMCQI for approximating Franke function \eqref{eq:frank} on TD point sets with $t=51,73,95,139,161$.
}
\label{fig:computation_time_three}
\end{figure}
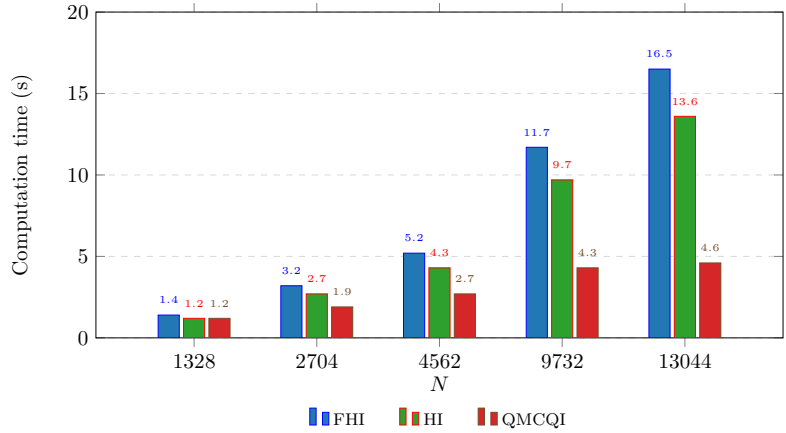



\bibliographystyle{plain}
\bibliography{ref}

\begin{thebibliography}{10}

\bibitem{an_SINUM2012_regularized}
C.~An, X.~Chen, I.H. Sloan, and R.S. Womersley.
\newblock Regularized least squares approximations on the sphere using
  spherical designs.
\newblock {\em SIAM J. Numer. Anal.}, 50(3):1513--1534, 2012.

\bibitem{brauchart-2014MCoM-qmc}
J.S. Brauchart, E.B. Saff, I.H. Sloan, and R.S. Womersley.
\newblock {QMC} designs: optimal order quasi {M}onte {C}arlo integration
  schemes on the sphere.
\newblock {\em Math. Comput.}, 83(290):2821--2851, 2014.

\bibitem{buhmann2022quasi-book}
M.~Buhmann and J.~J{\"a}ger.
\newblock {\em Quasi-interpolation}, volume~37.
\newblock Cambridge University Press, 2022.

\bibitem{buhmann2003radial-book}
M.D. Buhmann.
\newblock {\em Radial Basis Functions: Theory and Implementations}.
\newblock Cambridge University Press, 2003.

\bibitem{farrell-2017IMAJNA-multilevel}
P.~Farrell, K.~Gillow, and H.~Wendland.
\newblock Multilevel interpolation of divergence-free vector fields.
\newblock {\em IMA J. Numer. Anal.}, 37(1):332--353, 2017.

\bibitem{filbir-2008MathNachr-polynomial}
F.~Filbir and W.~Themistoclakis.
\newblock Polynomial approximation on the sphere using scattered data.
\newblock {\em Math. Nachr.}, 281(5):650--668, 2008.

\bibitem{floater-1996JCAM-multistep}
M.S. Floater and A.~Iske.
\newblock Multistep scattered data interpolation using compactly supported
  radial basis functions.
\newblock {\em J. Comput. Appl. Math.}, 73(1-2):65--78, 1996.

\bibitem{franz-wendland2023multilevel}
T.~Franz and H.~Wendland.
\newblock Multilevel quasi-interpolation.
\newblock {\em IMA J. Numer. Anal.}, 43(5):2934--2964, 2023.

\bibitem{ganesh2006quadrature}
M.~Ganesh and H.N. Mhaskar.
\newblock Quadrature-free quasi-interpolation on the sphere.
\newblock {\em Elec. Tran. Numer. Anal.}, 25:101--114, 2006.

\bibitem{gao-2020SISC-multivariate}
W.W. Gao, X.P. Sun, Z.M. Wu, and X.~Zhou.
\newblock Multivariate {M}onte {C}arlo approximation based on scattered data.
\newblock {\em SIAM J. Sci. Comput.}, 42(4):A2262--A2280, 2020.

\bibitem{gao-2024NM-quasi}
W.W. Gao, J.C. Wang, Z.J. Sun, and G.E. Fasshauer.
\newblock Quasi-interpolation for high-dimensional function approximation.
\newblock {\em Numer. Math.}, 156(5):1855--1885, 2024.

\bibitem{georgoulis-2013SISC-multilevel}
E.H. Georgoulis, J.~Levesley, and F.~Subhan.
\newblock Multilevel sparse kernel-based interpolation.
\newblock {\em SIAM J. Sci. Comput.}, 35(2):A815--A831, 2013.

\bibitem{golitschek-2001ConApprox-interpolation}
M.V. Golitschek and W.A. Light.
\newblock Interpolation by polynomials and radial basis functions on spheres.
\newblock {\em Constr. Approx.}, 17(1):1--18, 2001.

\bibitem{gomes2001approximation}
S.M. Gomes, A.K. Kushpel, and J.~Levesley.
\newblock Approximation in ${L}_2$ {S}obolev spaces on the 2-sphere by
  quasi-interpolation.
\newblock {\em J. Fourier. Anal. Appl.}, 7:283--295, 2001.

\bibitem{hesse-2017NM-radial}
K.~Hesse, I.H. Sloan, and R.S. Womersley.
\newblock Radial basis function approximation of noisy scattered data on the
  sphere.
\newblock {\em Numer. Math.}, 137(3):579--605, 2017.

\bibitem{hubbert-2023Adv-generalised}
S.~Hubbert and J.~J{\"a}ger.
\newblock Generalised {W}endland functions for the sphere.
\newblock {\em Adv. Comput. Math.}, 49(1):3, 2023.

\bibitem{ibanez2010construction}
M.J. Ib\'{a}\~{n}ez, A.~Lamnii, H.~Mraoui, and D.~Sbibih.
\newblock Construction of spherical spline quasi-interpolants based on
  blossoming.
\newblock {\em J. Comput. Appl. Math.}, 234(1):131--145, 2010.

\bibitem{iske-2005NumerAlgor-multilevel}
A.~Iske and J.~Levesley.
\newblock Multilevel scattered data approximation by adaptive domain
  decomposition.
\newblock {\em Numer. Algorithms}, 39(1):187--198, 2005.

\bibitem{jetter-1999MCoM-error}
K.~Jetter, J.~St{\"o}ckler, and J.D. Ward.
\newblock Error estimates for scattered data interpolation on spheres.
\newblock {\em Math. Comput.}, 68(226):733--747, 1999.

\bibitem{kempf-2023NM-high}
R.~Kempf and H.~Wendland.
\newblock High-dimensional approximation with kernel-based multilevel methods
  on sparse grids.
\newblock {\em Numer. Math.}, 154(3):485--519, 2023.

\bibitem{gia_2010SINUM_multiscale}
Q.T. Le~Gia, I.H. Sloan, and H.~Wendland.
\newblock Multiscale analysis in {S}obolev spaces on the sphere.
\newblock {\em SIAM J. Numer. Anal.}, 48(6):2065--2090, 2010.

\bibitem{legia_2012ACHA_multiscale}
Q.T. Le~Gia, I.H. Sloan, and H.~Wendland.
\newblock Multiscale approximation for functions in arbitrary {S}obolev spaces
  by scaled radial basis functions on the unit sphere.
\newblock {\em Appl. Comput. Harmon. Anal.}, 32(3):401--412, 2012.

\bibitem{lin_SINUM2021_distributed}
S.-B. Lin, Y.G. Wang, and D.-X. Zhou.
\newblock Distributed filtered hyperinterpolation for noisy data on the sphere.
\newblock {\em SIAM J. Numer. Anal.}, 59(2):634--659, 2021.

\bibitem{mcdiarmid-1989SurveyComb-method}
C.~McDiarmid.
\newblock On the method of bounded differences.
\newblock {\em Surveys in combinatorics}, 141(1):148--188, 1989.

\bibitem{montufar_2022FoCom_distributed}
G.~Mont{\'u}far and Y.G. Wang.
\newblock Distributed learning via filtered hyperinterpolation on manifolds.
\newblock {\em Found. Comput. Math.}, 22(4):1219--1271, 2022.

\bibitem{muller1966SphHarm}
C.~M\"{u}ller.
\newblock {\em Spherical Harmonics}.
\newblock Lecture Notes in Mathematics, ~Vol. 17, Springer-Verlag,~ Berlin,
  1966.

\bibitem{narcowich-1999ACHA-multilevel}
F.J. Narcowich, R.~Schaback, and J.D. Ward.
\newblock Multilevel interpolation and approximation.
\newblock {\em Appl. Comput. Harmon. Anal.}, 7(3):243--261, 1999.

\bibitem{narcowich-2007FoCM-direct}
F.J. Narcowich, X.P. Sun, J.D. Ward, and H.~Wendland.
\newblock Direct and inverse {S}obolev error estimates for scattered data
  interpolation via spherical basis functions.
\newblock {\em Found. Comput. Math.}, 7(3):369--390, 2007.

\bibitem{narcowich-2002SIMA-scattered}
F.J. Narcowich and J.D. Ward.
\newblock Scattered data interpolation on spheres: error estimates and locally
  supported basis functions.
\newblock {\em SIAM J. Math. Anal.}, 33(6):1393--1410, 2002.

\bibitem{ramming-2018MCoM-kernel}
T.~Ramming and H.~Wendland.
\newblock A kernel-based discretisation method for first order partial
  differential equations.
\newblock {\em Math. Comput.}, 87(312):1757--1781, 2018.

\bibitem{sharon-2023SISC-multiscale}
N.~Sharon, R.S. Cohen, and H.~Wendland.
\newblock On multiscale quasi-interpolation of scattered scalar-and
  manifold-valued functions.
\newblock {\em SIAM J. Sci. Comput.}, 45(5):A2458--A2482, 2023.

\bibitem{sloan2011polynomial}
I.~H. Sloan.
\newblock Polynomial approximation on spheres-generalizing de la
  {V}all{\'e}e-{P}oussin.
\newblock {\em Comput. Meth. Appl. Math.}, 11(4):540--552, 2011.

\bibitem{sloan_1995JAT_polynomial}
I.H. Sloan.
\newblock Polynomial interpolation and hyperinterpolation over general regions.
\newblock {\em J. Approx. Theory}, 83(2):238--254, 1995.

\bibitem{sloan_2012Geom_filtered}
I.H. Sloan and R.S. Womersley.
\newblock Filtered hyperinterpolation: a constructive polynomial approximation
  on the sphere.
\newblock {\em GEM Int. J. Geomath.}, 3(1):95--117, 2012.

\bibitem{sun-gao-sun}
Z.J. Sun, W.W. Gao, and X.P. Sun.
\newblock Spherical quasi-interpolation using scaled zonal kernels.
\newblock {\em IMA J. Numer. Anal.}, doi.org/10.1093/imanum/draf104, 2025.

\bibitem{sun-2022JSC-convergent}
Z.J. Sun, W.W. Gao, and R.~Yang.
\newblock A convergent iterated quasi-interpolation for periodic domain and its
  applications to surface pdes.
\newblock {\em J. Sci. Comput.}, 93(2):37, 2022.

\bibitem{sun2025githubcode}
Z.J. Sun, M.Y. Lv, and X.P. Sun.
\newblock {SKQI}.
\newblock \url{https://github.com/zhengjiesun/SKQI}, 2024.
\newblock Accessed: 2025-10-12.

\bibitem{usta-levesley2018multilevel}
F.~Usta and J.~Levesley.
\newblock Multilevel quasi-interpolation on a sparse grid with the {G}aussian.
\newblock {\em Numer. Algorithms}, 77:793--808, 2018.

\bibitem{wendland2004scattered}
H.~Wendland.
\newblock {\em Scattered data approximation}, volume~17.
\newblock Cambridge university press, 2004.

\bibitem{wendland-2010NM-multiscale}
H.~Wendland.
\newblock Multiscale analysis in {S}obolev spaces on bounded domains.
\newblock {\em Numer. Math.}, 116(3):493--517, 2010.

\bibitem{wu2005generalized}
Z.M. Wu and J.P. Liu.
\newblock Generalized {S}trang-{F}ix condition for scattered data
  quasi-interpolation.
\newblock {\em Adv. Comput. Math.}, 23:201--214, 2005.

\bibitem{wu1994shape}
Z.M. Wu and R.~Schaback.
\newblock Shape preserving properties and convergence of univariate
  multiquadric quasi-interpolation.
\newblock {\em Acta Math. Appl. Sin.}, 10:441--446, 1994.

\end{thebibliography}

\end{document}